\newcommand{\inner}[1]{\left\langle #1 \right\rangle}
\newcommand{\norm}[1]{\left\Vert #1\right\Vert}
\newcommand{\bb}[1]{\mathbb{#1}}
\newcommand{\conv}[0]{\mathrm{conv}\,}
\newcommand{\ca}[1]{\mathcal{#1}}
\newcommand{\mk}{{m_{k} }}
\newcommand{\vk}{{v_{k} }}
\newcommand{\mkp}{{m_{k+1} }}
\newcommand{\vkp}{{v_{k+1} }}
\newcommand{\xk}{{x_{k} }}
\newcommand{\yk}{{y_{k} }}
\newcommand{\xkp}{{x_{k+1} }}
\newcommand{\ykp}{{y_{k+1} }}
\newcommand{\D}{\ca{D}}
\newcommand{\Rn}{\mathbb{R}^n}
\newtheorem{theo}{Theorem}[section]
\newtheorem{lem}[theo]{Lemma}
\newtheorem{prop}[theo]{Proposition}
\newtheorem{coro}[theo]{Corollary}
\newtheorem{defin}[theo]{Definition}
\newtheorem{rmk}[theo]{Remark}
\newtheorem{assumpt}[theo]{Assumption}
\numberwithin{equation}{section}
\title{Adam-family Methods with Decoupled Weight Decay in Deep Learning\thanks{The first two authors have made equal contributions to this paper.}}
\author{Kuangyu Ding\thanks{Department of Mathematics, National University of Singapore, Singapore (kuangyud@u.nus.edu).},~
	Nachuan Xiao\thanks{Institute of Operations Research and Analytics, National University of Singapore, Singapore (xnc@lsec.cc.ac.cn).},~
	Kim-Chuan Toh\thanks{Department of Mathematics, and Institute of Operations Research and Analytics, National University of Singapore, Singapore 119076 (mattohkc@nus.edu.sg).}}
\begin{document}
	\maketitle
	
	\begin{abstract}
		In this paper, we investigate the convergence properties of a wide class of Adam-family methods for minimizing quadratically regularized nonsmooth nonconvex optimization problems, especially in the context of training nonsmooth neural networks with weight decay. Motivated by the AdamW method, we propose a novel framework for Adam-family methods with decoupled weight decay. Within our framework, the estimators for the first-order and second-order moments of stochastic subgradients are updated independently of the weight decay term. Under mild assumptions and with non-diminishing stepsizes for updating the primary optimization variables, we establish the convergence properties of our proposed framework. In addition, we show that our proposed framework encompasses a wide variety of well-known Adam-family methods, hence offering convergence guarantees for these methods in the training of nonsmooth neural networks. More importantly, we show that our proposed framework asymptotically approximates the SGD method, thereby providing an explanation for the empirical observation that decoupled weight decay enhances generalization performance for Adam-family methods. As a practical application of our proposed framework, we propose a novel Adam-family method named  Adam with Decoupled Weight Decay (AdamD), and establish its convergence properties under mild conditions. Numerical experiments demonstrate that AdamD outperforms Adam and is comparable to AdamW, in the aspects of both generalization performance and efficiency.
	\end{abstract}
	
\section{Introduction}
We consider the following unconstrained stochastic optimization problem:
\begin{equation}
	\label{Prob_Ori}
	\tag{UOP}
	\begin{aligned}
		\min_{x \in \Rn}\quad  g(x) := f(x) + \frac{\sigma}{2} \norm{x}^2,
	\end{aligned}
\end{equation}
where the function $f: \Rn  \to \bb{R}$ is assumed to be locally Lipschitz continuous and possibly nonsmooth over $\Rn$. Moreover, the constant $\sigma > 0$ is the penalty parameter for the quadratic regularization term. Such a  regularization term is also known as the weight decay term, which is widely employed to enhance the generalization performance in training neural networks \cite{bos1996using,krogh1991simple}. 

The stochastic gradient descent (SGD) is one of the most fundamental methods for solving \eqref{Prob_Ori}. In the SGD method, all coordinates of the variable $x$ are updated with the same stepsize (i.e., learning rate). To accelerate the SGD method, the widely used Adam method \cite{kingma2014adam} is developed by adjusting the coordinate-wise stepsizes based on first-order and second-order moments of the stochastic gradients. Due to its high efficiency in training neural networks, the Adam method has become one of the most popular choices for various neural network training tasks.

Motivated by the Adam method, numerous efficient Adam-family methods {have been} developed, such as  AdaBelief \cite{zhuang2020adabelief}, AMSGrad \cite{reddi2019convergence}, Yogi \cite{zaheer2018adaptive}, etc. From a theoretical perspective, the majority of existing works \cite{barakat2021convergence,guo2021novel,shi2021rmsprop,wang2022provable,zaheer2018adaptive,zhang2022adam,zou2019sufficient} establish convergence properties for these Adam-family methods, based on the assumption that $f$ is continuously differentiable over $\Rn$.  However, as emphasized  in \cite{bolte2021nonsmooth,bolte2021conservative,bolte2022differentiating}, nonsmooth activation functions, including ReLU and leaky ReLU, are popular choices in building neural networks. For any neural network built from these nonsmooth activation functions, its loss function is usually nonsmooth and lacks Clarke regularity (e.g., differentiability, weak convexity, etc.). Consequently, these existing works are unable to provide convergence guarantees for their analyzed methods in the training of nonsmooth neural networks.

\paragraph{\bf Existing works on training nonsmooth neural networks}

In nonsmooth optimization, it has been demonstrated in \cite{daniilidis2020pathological} that a general Lipschitz continuous functions $f$ can exhibit highly pathological properties, leading to the failure of subgradient descent method to find any critical point of $f$. Moreover, the chain rule may fail for the Clarke subdifferential \cite{clarke1990optimization} of the loss function of a nonsmooth neural network. Specifically, when we differentiate the loss function of a nonsmooth neural network using automatic differentiation (AD) algorithms, the outputs may not be contained in the Clarke subdifferential of $f$ \cite{bolte2020mathematical}.

Consequently, most of the existing works restrict their analysis to the class of {\it path-differentiable} functions \cite[Definition 3]{bolte2021conservative}.  For any path-differentiable function $f$, there exists a graph-closed set-valued mapping $\D_f$, called {\it conservative field} for $f$, such that for any absolutely continuous mapping $\gamma: [0, \infty) \to \Rn$, it holds that $f(\gamma(t)) - f(\gamma(0)) = \int_{0}^t {\max_{d \in \D_f(\gamma(s))}}\inner{\dot{\gamma}(s), d} \mathrm{d}s$ for any $t \geq 0$.
It is worth mentioning that the most important choice of the conservative field $\D_f$ is the Clarke subdifferential of $f$. Moreover, as discussed in \cite{bolte2021conservative,castera2021inertial,davis2020stochastic}, the class of path-differentiable functions are general enough to cover a wide range of objective functions in neural network training tasks, especially when the neural networks employ nonsmooth building blocks, such as the ReLU activation function.  
In addition, \cite{bolte2020mathematical,bolte2021conservative} show that the outputs of AD algorithms in differentiating nonsmooth neural networks are enclosed in a conservative field of the loss function. Therefore, the concept of the conservative field is capable of characterizing the outputs of AD algorithms, as they are implemented in training nonsmooth neural networks in practice.

Based on the stochastic approximation frameworks \cite{benaim2006dynamics,benaim2005stochastic,borkar2009stochastic,davis2020stochastic}, several existing works have investigated the convergence properties of stochastic subgradient methods in training  nonsmooth neural networks. In particular, \cite{bolte2021conservative,davis2020stochastic} study the convergence properties of SGD methods and proximal SGD methods for minimizing nonsmooth path-differentiable functions. Moreover, \cite{castera2021inertial} proposes the inertial Newton algorithm (INNA), which can be regarded as a variant of momentum-accelerated SGD method. Additionally, \cite{le2023nonsmooth,ruszczynski2020convergence,xiao2023convergence} establish the convergence properties of SGD methods with heavy-ball momentum. Furthermore, 
\cite{hu2022constraint,hu2022improved} apply these methods to solve manifold optimization problems based on the constraint dissolving approach \cite{xiao2023dissolving}. In addition, \cite{gurbuzbalaban2022stochastic,ruszczynski2021stochastic} design stochastic subgradient methods for solving multi-level nested optimization problems.  

With the concept of conservative field,  the Adam method utilizes the following framework
when applied to solve \eqref{Prob_Ori}:
\begin{equation}
	\label{Eq_intro_Adam}
	\left\{
	\begin{aligned}
		& g_k = d_k + \xi_{k+1},\\
		&\mkp = (1-\theta_k)\mk + \theta_k (g_k+ \sigma \xk),\\
		& \vkp = (1-\beta_k) \vk + \beta_k (g_k + \sigma \xk)^2,\\
		&\xkp = \xk - \eta_k ( \sqrt{\vkp} + \varepsilon)^{-1} \odot \mkp. 
	\end{aligned}
	\right.
\end{equation}
Here $g_k$ is a stochastic subgradient of $f$ at $\xk$, in the sense that $d_k$ represents an inexact evaluation of $\D_f(\xk)$ and $\xi_{k+1}$ is a random vector that characterizes the noise in the evaluation. Moreover, $\odot$ and $(\cdot)^{p}$ refer to the element-wise multiplication and element-wise $p$-th power,  respectively.   The sequences $\{\mk\}$ and $\{\vk\}$ are usually referred to as the momentum terms and estimators respectively,  and they are updated to track the first-order and second-order moment of $g_k+\sigma x_k$. Furthermore, the sequences $\{\eta_k\}$, $\{\theta_k\}$ and $\{\beta_k\}$ are the stepsizes for the variables $\{\xk\}$, the momentum terms $\{\mk\}$ and the estimators $\{\vk\}$, respectively. 

In the framework \eqref{Eq_intro_Adam}, the weight decay term is integrated with the function $f$ throughout the iterations. As a result, we can directly apply the existing convergence results on the Adam method to analyze the convergence properties of the framework \eqref{Eq_intro_Adam}. In particular, when $f$ is a nonsmooth path-differentiable function, \cite{xiao2023adam} investigates the convergence of a class of Adam-family methods based on the frameworks proposed by \cite{benaim2005stochastic,bianchi2022convergence,davis2020stochastic}. However, in the analysis of \cite{xiao2023adam}, the stepsizes are assumed to be diminishing and single-timescale, in the sense that $\{\eta_k\}$, $\{\theta_k\}$ and $\{\beta_k\}$ converge to $0$ at the same rate as $k$ tends to infinity.

In establishing the convergence properties for stochastic subgradient methods, the diminishing stepsizes is a common assumption, as it leads to the almost sure convergence of the iterates $\{\xk\}$ to critical points under various assumptions \cite{benaim2005stochastic,bolte2022subgradient,bolte2021conservative,castera2021inertial,davis2020stochastic,le2023nonsmooth,ruszczynski2020convergence,xiao2023adam,xiao2023convergence}. On the other hand, \cite{bianchi2022convergence} shows that with nonsmooth path-differentiable objective functions and a fixed stepsize,  the iterates of the SGD method only converges to a neighborhood of the $\D_f$-stationary points of $f$ almost surely, even under the noiseless settings. In addition, the results in \cite{bianchi2022convergence} have not been extended to any other stochastic subgradient methods. Given that non-diminishing stepsizes (i.e., ${\lim\inf}_{k\to \infty} \eta_k > 0$)  are widely employed in most computational frameworks, it is thus important for us to investigate the convergence properties of the Adam-family methods in cases where the sequence of stepsizes ${\eta_k}$ is non-diminishing.

\paragraph{\bf Challenges from decoupled weight decay in Adam-family methods}
Another challenge in solving \eqref{Prob_Ori} by Adam-family methods is related to the incorporation of the weight decay term. The conventional approach is to directly minimize $g$ by these Adam-family methods, as implemented in various computational frameworks. In these methods, \cite{loshchilov2017decoupled} demonstrates that the weight decay is coupled with the stochastic subgradients of $f$, in the sense that $f$ and the weight decay term $\frac{\sigma}{2} \norm{x}^2$ are treated as an integrated function to be minimized (e.g., the Adam method in framework \eqref{Eq_intro_Adam}).  

As demonstrated in \cite{loshchilov2017decoupled}, the Adam method with coupled weight decay usually exhibits worse generalization performance than the SGD method. To address this issue, \cite{loshchilov2017decoupled} suggests decoupling the weight decay term from the stochastic subgradients of $f$, and proposes the AdamW method. The update schemes of the AdamW method can be summarized by the following framework:
\begin{equation*}
	\left\{
	\begin{aligned}
		& g_k = d_k + \xi_{k+1},\\
		&\mkp = (1-\theta_k)\mk + \theta_k g_k,\\
		& \vkp = (1-\rho_k) \vk + \rho_k (g_k)^2,\\
		&\xkp = \xk - \eta_k ( \sqrt{\vkp} + \varepsilon)^{-1} \odot \mkp -\eta_k \sigma \xk. 
	\end{aligned}
	\right.
\end{equation*}
Here, \cite{loshchilov2017decoupled} demonstrates that the weight decay is decoupled from the momentum terms $\{\mk\}$ and the estimators $\{\vk\}$, in the sense that the update schemes for $\{\mk\}$ and $\{\vk\}$ are independent of the weight decay parameter $\sigma$. Moreover, unlike the Adam method in \eqref{Eq_intro_Adam}, the weight decay term $\sigma \xk$ is not scaled by the preconditioner $( \sqrt{\vkp} + \varepsilon)^{-1}$ in the AdamW method.

The AdamW method, recognized for its superior generalization performance over the Adam method with coupled weight decay (i.e., the method in \eqref{Eq_intro_Adam}), has become a popular choice in the training of neural networks \cite{loshchilov2017decoupled}, especially in image classification tasks. However, compared with the Adam method, the convergence properties of the AdamW method remain relatively unexplored. As suggested in \cite{loshchilov2017decoupled,zhou2022towards}, the AdamW method iterates by taking a descent step towards a dynamically adjusted surrogate function $f(x) + \frac{\sigma}{2} \inner{x, \sigma (\sqrt{\vkp} + \varepsilon) \odot x}$ in the $k$-th iteration, thereby lacking a clearly defined objective function to minimize. As a result, only the paper \cite{zhou2022towards} has established the convergence properties of the AdamW method for continuously differentiable $f$. In \cite{zhou2022towards}, the stationarity of the AdamW method is measured by $\norm{\nabla f(x) + \sigma (\sqrt{\vkp} + \varepsilon) \odot x}$. As the estimators $\{\vk\}$ evolves over iterations and may not converge, the proposed stationarity measure is at best an approximation of the standard notion of stationarity. More importantly, the analysis in \cite{zhou2022towards} relies on the differentiability of $f$, and cannot be extended to analyze the convergence of AdamW for nonsmooth cases.  Consequently, the results presented in \cite{zhou2022towards} do not sufficiently explain the convergence of AdamW in real-world training tasks, where the neural networks are typically nonsmooth.

Given that Adam-family methods with coupled weight decay usually perform less effectively than the AdamW method, and considering that the AdamW method lacks convergence guarantees in training nonsmooth neural networks, we are driven to raise the following question:
\begin{quote}
	Can we design Adam-family methods with decoupled weight decay that have convergence guarantees with non-diminishing stepsizes $\{\eta_k\}$ under practical settings, especially in the context of training nonsmooth neural networks? 
\end{quote}

\paragraph{\bf Contributions}

The contributions of our paper are summarized as follows.
\begin{itemize}
	\item {\bf A novel framework with decoupled weight decay}
	
	In this paper, motivated by the AdamW method, we propose a novel framework for Adam-family methods with decoupled weight decay (AFMDW), 
	\begin{equation}
		\tag{AFMDW}
		\label{Eq_Framework}
		\left\{
		\begin{aligned}
			& g_k = d_k + \xi_{k+1},\\
			&\mkp = (1-\theta_k)\mk + \theta_k g_k,\\
			&\text{Choose the estimator  $\vkp$},\\
			&\xkp = \xk - \eta_k H(\vkp) \odot (\mkp + \sigma \xk).
		\end{aligned}
		\right.
	\end{equation}
	Here $d_k$ is an approximated evaluation of $\D_f(\xk)$, while $\xi_{k+1}$ refers to the corresponding evaluation noise of $d_{k}$. Therefore, $g_k$ represents the stochastic subgradients of $f$ at $\xk$.  Moreover, the sequences $\{\eta_k\}$ and $\{\theta_k\}$ are stepsizes for the variables $\{\xk\}$ and the momentum terms $\{\mk\}$, respectively. Furthermore, $H: \Rn \to \Rn$ is the mapping that determines how we construct the preconditioner based on $\vkp$ in the framework \eqref{Eq_Framework}. As the framework \eqref{Eq_Framework} is designed to minimize \eqref{Prob_Ori},  both the momentum term $\mkp$ and the weight decay term $\sigma \xk$ are scaled by $H(\vkp)$ in  \eqref{Eq_Framework}, which makes it different from the AdamW method.  
	
	\item {\bf Convergence with non-diminishing $\{\eta_k\}$}

	Under mild assumptions with non-diminishing stepsizes $\{\eta_k\}$, which is more consistent with practical applications, we prove that any cluster point of $\{\xk\}$ lies in the set $\{x \in \Rn: 0\in \D_f(x) + \sigma x\}$, which can be regarded as the stationary points of $g$ in the sense of the conservative field. Moreover, we extend these results to the cases where $\{\eta_k\}$ and $\{\theta_k\}$ are single-timescale, in the sense that $\{\eta_k\}$ and $\{\theta_k\}$ diminish in the same rate. Furthermore, we demonstrate that the framework \eqref{Eq_Framework} encompasses ({see Table \ref{Table_intro_1} for details}) a wide range of Adam-family methods, including SGD, Adam, AMSGrad, AdaBelief, AdaBound, Yogi, hence providing convergence guarantees for these Adam-family methods in training nonsmooth neural networks. 
	

	\item {\bf Asymptotic approximation to SGD method}

	We prove that under mild conditions, almost surely, the sequence $\{\yk\} := \{-\frac{1}{\sigma}\mk\}$ satisfies $\lim_{k\to \infty} \norm{\yk - \xk} = 0$, and follows the inclusion
	\begin{equation}
		\label{Eq_intro_inclusion}
		\ykp - \yk \in -\frac{\theta_k}{\sigma}\left( \D_f(\xk) + \sigma \yk + \xi_{k+1} \right), \qquad \text{for any $k\geq 0$.} 
	\end{equation}
	Consequently, the framework \eqref{Eq_Framework} asymptotically approximates the SGD method, in the sense that the sequence ${\yk}$ can be viewed as a sequence generated by the SGD method with $\norm{\xk - \yk} = 0$. This fact indicates that the weight decay term in our framework \eqref{Eq_Framework} not only introduces the quadratic regularization $\frac{\sigma}{2} \norm{x}^2$ to $f$ in \eqref{Prob_Ori}, but also guides the sequence ${\xk}$ to follow the iterations of SGD methods when $k$ is sufficiently large. Thus in accordance with some existing works \cite{keskar2017improving,zhou2020towards} demonstrating that SGD usually generalizes better than Adam, our analysis lent support to the empirical observation that the decoupled weight decay reduces generalization error for the Adam method.

	\item {\bf Numerical experiments}

	Based on our proposed framework \eqref{Eq_Framework}, we propose a novel method named Adam with Decoupled Weight Decay (AdamD) and establish its convergence guarantees in training nonsmooth neural networks. We conduct numerical experiments in both image classification and language modeling tasks to assess the performance of our proposed AdamD method. The results show that in image classification tasks, AdamD outperforms Adam and performs comparably to AdamW in both generalization and efficiency. In language modeling tasks, it demonstrates similar effectiveness to Adam and outperforms AdamW, highlighting its versatility and effectiveness across different tasks. Additionally, our numerical experiments illustrate that the sequence $\{\norm{\yk - \xk}\}$ tends to $0$, which validates our theoretical analysis that the proposed  AdamD method asymptotically approximates the SGD method. These results further demonstrate the promising potential of our proposed framework \eqref{Eq_Framework}. 
\end{itemize}

\paragraph{\bf Organization}
The rest of this paper is organized as follows. In Section 2, we define the notations used throughout the paper and present the necessary preliminary concepts related to nonsmooth analysis and stochastic approximation. Section 3 presents the convergence properties of our proposed framework \eqref{Eq_Framework} with non-diminishing stepsizes $\{\eta_k\}$. In Section 4, we extend these convergence properties to framework \eqref{Eq_Framework} with single-timescale stepsizes. As an application of our theoretical analysis, we propose a new Adam-family method named Adam with Decoupled Weight Decay (AdamD) and establish its convergence properties in Section 5. In Section 6, we present the results of our numerical experiments that investigate the performance of the proposed AdamD in training nonsmooth neural networks. Some further discussions on the AdamD method are also presented in Section 6. Finally, we conclude the paper in the last section.

\section{Preliminaries}

\subsection{Notations}
For any vectors $x$ and $y$ in $\Rn$ and $\delta \in \bb{R}$, we denote $x\odot y$, $x^{ \delta}$, $x/y$, $|x|$, $x+\delta$, $\sqrt{x}$ as the vectors whose $i$-th entries are given by $x_iy_i$, $x_i^{\delta}$, $x_i/y_i$, $|x_i|$, $x_i + \delta$, and $\sqrt{x_i}$, respectively. 
We 
denote $\Rn_+:=\{ x\in \Rn: x_i\geq 0 \text{ for any } 1\leq i\leq n \}$. Moreover, for any subsets $\ca{X}, \ca{Y} \subset \Rn$, we denote $\ca{X}\odot \ca{Y}:= \{x\odot y: x \in \ca{X}, y\in \ca{Y} \}$,  $|\ca{X}|:= \{|x|: x \in \ca{X}\}$ and $\norm{\ca{X}} = \sup\{ \norm{w} : w\in \ca{X}\}$. In addition, for any $z \in \Rn$, we denote $z + \ca{X} := \{z\} + \ca{X}$ and $z \odot \ca{X} := \{z\} \odot\ca{X}$.

Furthermore, for any positive sequence $\{\theta_k\}$, we define 
$\lambda_0 := 0$, $\lambda_i := \sum_{k = 0}^{i-1} \theta_k$ for $i\geq 1$, and $\Lambda(t) := \sup  \{k \geq 0: t\geq \lambda_k\} $. More explicitly, $\Lambda(t) = p$ if $\lambda_p \leq t < \lambda_{p+1}$ for any $p \geq 0$. In particular, $\Lambda(\lambda_p) = p.$

\subsection{Probability theory}
In this subsection, we present some essential concepts from probability theory, which are necessary for the proofs in this paper. 
\begin{defin}
	Let $(\Omega, \ca{F}, \mathbb{P})$ be a probability space. We say $\{\ca{F}_k\}_{k \in \bb{N}}$ is a filtration  if  $\{\ca{F}_k\}$ is a collection of $\sigma$-algebras that
	satisfies $
	\ca{F}_0 \subseteq \ca{F}_1 \subseteq \cdots \subseteq \ca{F}_{\infty} \subseteq \ca{F}$. 
\end{defin}

\begin{defin}
	We say that a stochastic series $\{\xi_k\}$ is a martingale difference sequence if the following conditions hold,
	\begin{itemize}
		\item The sequence of random vectors $\{\xi_k\}$ is adapted to the filtration $\{ \ca{F}_{k} \}$, 
		\item For each $k \geq 0$, almost surely, it holds that $\bb{E}[|\xi_k|] < \infty$ and $\bb{E}\left[ \xi_k | \ca{F}_{k-1} \right] = 0$. 
	\end{itemize}
	Moreover, we say a martingale difference  sequence $\{\xi_k\}$ is uniformly bounded, if there exists a constant $M_{\xi}> 0$ such that $\sup_{k\geq 0}\norm{\xi_k} \leq  M_{\xi}$. 
\end{defin}

In the following, we present the results in \cite[Proposition 4.4]{benaim2006dynamics}, which controls the weighted summation of any uniformly bounded martingale difference sequence,  and plays a crucial role in establishing the convergence properties for our proposed framework \eqref{Eq_Framework}. 
\begin{prop}[Proposition 4.4 in \cite{benaim2006dynamics}]
	\label{Prop_UB_martingale_difference_sequence}
	Suppose  $\{\theta_k\}$ is a diminishing positive sequence of real numbers that satisfy $\lim_{k \to \infty} \theta_k \log(k) = 0$. 
	Then for any $T > 0$, and any uniformly bounded martingale difference sequence $\{\xi_k\}$, almost surely, it holds that
	\begin{equation}
		\lim_{s \to \infty} \sup_{s\leq i \leq \Lambda(\lambda_s + T)}\norm{ \sum_{k = s}^{i} \theta_k \xi_{k+1}} =0 . 
	\end{equation}
\end{prop}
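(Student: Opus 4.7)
The plan is to obtain the almost sure statement via a maximal concentration inequality for martingales with bounded differences (Azuma--Hoeffding), combined with the first Borel--Cantelli lemma. Fix $\varepsilon > 0$. For each $s$, define the partial sum process $S^i_s := \sum_{k=s}^{i} \theta_k \xi_{k+1}$ for $i \geq s$. Since $\bb{E}[\xi_{k+1}|\ca{F}_k] = 0$ and the $\theta_k$ are deterministic, $\{S^i_s\}_{i\geq s}$ is a martingale with respect to the filtration $\{\ca{F}_{i+1}\}_{i\geq s-1}$, whose increments satisfy $\norm{\theta_k \xi_{k+1}} \leq \theta_k M_{\xi}$.

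The second step is to apply the maximal Azuma--Hoeffding inequality componentwise, together with a union bound over the $n$ coordinates, to obtain
\begin{equation*}
\mathbb{P}\Bigl(\sup_{s\leq i\leq \Lambda(\lambda_s+T)}\norm{S^i_s}\geq \varepsilon\Bigr) \;\leq\; 2n\exp\Bigl(-\frac{\varepsilon^2}{2 n M_{\xi}^2\sum_{k=s}^{\Lambda(\lambda_s+T)}\theta_k^2}\Bigr).
\end{equation*}
Let $\tilde\theta_s := \sup_{k\geq s}\theta_k$, which tends to $0$ because $\{\theta_k\}$ is diminishing. Using $\theta_k\leq \tilde\theta_s$ for $k\geq s$, and the elementary estimate $\sum_{k=s}^{\Lambda(\lambda_s+T)}\theta_k = \lambda_{\Lambda(\lambda_s+T)+1}-\lambda_s \leq T + \tilde\theta_s$ that follows from the definition of $\Lambda$, I obtain $\sum_{k=s}^{\Lambda(\lambda_s+T)}\theta_k^2 \leq \tilde\theta_s(T+\tilde\theta_s)$, so the exponent above is at most $-\varepsilon^2/[2nM_\xi^2\tilde\theta_s(T+\tilde\theta_s)]$.

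The third step is to translate the hypothesis $\theta_k\log k \to 0$ into $\tilde\theta_s\log s \to 0$: indeed, for any $\delta>0$, eventually $\theta_k\leq \delta/\log k\leq \delta/\log s$ for all $k\geq s$, hence $\tilde\theta_s\leq \delta/\log s$. Consequently, for any $\alpha>1$, and all $s$ large enough (depending on $\varepsilon$, $n$, $M_\xi$, $T$, $\alpha$), the right-hand side of the Azuma bound is dominated by $2n\,s^{-\alpha}$, which is summable in $s$. By the first Borel--Cantelli lemma, almost surely $\sup_{s\leq i\leq \Lambda(\lambda_s+T)}\norm{S^i_s} < \varepsilon$ for all but finitely many $s$. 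Taking a countable intersection over $\varepsilon = 1/j$, $j\in\bb{N}$, then yields the claimed almost sure limit.

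The main obstacle is the precise matching between the subgaussian Azuma tail, whose exponent involves $1/\tilde\theta_s$, and the logarithmic rate condition $\theta_k\log k\to 0$: this hypothesis is exactly what converts the concentration bound into a polynomial $s^{-\alpha}$ summable in $s$, and anything strictly weaker (e.g.\ $\theta_k=O(1/\log\log k)$) would not close the Borel--Cantelli argument. The remaining work is routine bookkeeping: verifying the martingale property under the shifted filtration, invoking the maximal (rather than one-sided) version of Azuma--Hoeffding, and controlling the inner sum via the definition of $\Lambda$.
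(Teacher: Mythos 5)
Your proof is correct. The paper does not prove this proposition itself but quotes it from Bena\"im's work, and your argument --- the coordinatewise maximal Azuma--Hoeffding bound with $\sum_{k=s}^{\Lambda(\lambda_s+T)}\theta_k^2\leq \tilde\theta_s(T+\tilde\theta_s)$, the conversion of $\theta_k\log k\to 0$ into a summable polynomial tail $s^{-\alpha}$, and Borel--Cantelli --- is precisely the standard proof of the cited result for uniformly bounded noise, so there is nothing to add.
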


\subsection{Nonsmooth analysis}
\label{Section_Nonsmooth_Analysis}

In this subsection, we introduce some basic concepts in nonsmooth optimization, especially those related to the concept of the conservative field \cite{bolte2021conservative}. Interested readers could refer to \cite{bolte2021conservative,davis2020stochastic} for more details. 

We begin our introduction on the concept of Clarke subdifferential \cite{clarke1990optimization}, which plays an essential role in characterizing stationarity and development of algorithms for nonsmooth optimization problems.

\begin{defin}[\cite{clarke1990optimization}]
	\label{Defin_Subdifferential}
	For any given locally Lipschitz continuous function $f: \Rn \to \bb{R}$ and any $x \in \Rn$, 
	the Clarke subdifferential $\partial f$ is defined as 
	\begin{equation}
		\partial f(x) := \conv\left( \{ d \in \Rn:  \xk \to x, \nabla f(\xk)\to d  \}   \right).
	\end{equation}
\end{defin}

Next we present a brief introduction on the concept of conservative field, which can be applied to characterize how nonsmooth neural networks are differentiated by the automatic differentiation (AD) algorithms.

\begin{defin}
	A set-valued mapping $\ca{D}: \Rn \rightrightarrows \bb{R}^s$ is a mapping from $\Rn$ to a collection of subsets of $\bb{R}^s$. $\D$ is said to have a closed graph, or is graph-closed if the graph of $\ca{D}$, defined by
	\begin{equation*}
		\mathrm{graph}(\D) := \left\{ (w,z) \in \Rn \times \bb{R}^s: w \in \bb{R}^m, z \in \D(w) \right\},
	\end{equation*}
	is a closed subset of $\Rn \times \bb{R}^s$.  
\end{defin}

\begin{defin}
	A set-valued mapping $\ca{D}: \Rn \rightrightarrows \bb{R}^s$ is said to be locally bounded if, for any $x \in \Rn$, there is a neighborhood $V_x$ of $x$ such that $\cup_{y \in V_x}\ca{D}(y)$ is bounded. 
\end{defin}



Next, we present the definition of conservative field and its corresponding potential function.

\begin{defin}
	An absolutely continuous curve is a continuous mapping $\gamma: \bb{R}_+ \to \Rn $ whose derivative $\gamma'$ exists almost everywhere in $\bb{R}_+$ and $\gamma(t) - \gamma(0)$ equals the Lebesgue integral of $\gamma'$ between $0$ and $t$ for all $t \in \bb{R}_+$, i.e.,
	\begin{equation*}
		\gamma(t) = \gamma(0) + \int_{0}^t \gamma'(u) \mathrm{d} u, \qquad \text{for all $t \in \bb{R}_+$}.
	\end{equation*}
\end{defin}

\begin{defin}[Definition 1 in \cite{bolte2021conservative}]
	\label{Defin_conservative_field}
	Let $\ca{D}$ be a graph-closed set-valued mapping from $\bb{R}^{n}$ to subsets of $\bb{R}^{n}$. We call $\ca{D}$ as a conservative field whenever it has nonempty compact values, and for any absolutely continuous curve $\gamma: [0,1] \to \bb{R}^{n} $ satisfying $\gamma(0) = \gamma(1)$, it holds that 
	\begin{equation}
		\label{Eq_Defin_Conservative_mappping}
		\int_{0}^1 \max_{v \in \ca{D}(\gamma(t)) } \inner{\gamma'(t), v} \mathrm{d}t = 0. 
	\end{equation}
	Here the integral is understood in the Lebesgue sense. 
\end{defin}

It is important to note that any conservative field is locally bounded  \cite[Remark 3]{bolte2021conservative}. We now introduce the definition of potential function corresponding to a conservative field.

\begin{defin}[Definition 2 in \cite{bolte2021conservative}]
	\label{Defin_conservative_field_path_int}
	Let $\ca{D}$ be a conservative field in $\bb{R}^{n}$. Then with any given $x_0 \in \bb{R}^{n}$, we can define a function $f: \Rn \to \bb{R}$ through the path integral
	\begin{equation}
		\label{Eq_Defin_CF}
		\begin{aligned}
			f(x) = &{} f(x_0) + \int_{0}^1 \max_{d \in \ca{D}(\gamma(t)) } \inner{\gamma'(t), d} \mathrm{d}t
			= f(x_0) + \int_{0}^1 \min_{d \in \ca{D}(\gamma(t)) } \inner{\gamma'(t), d} \mathrm{d}t
		\end{aligned}
	\end{equation} 
	for any absolutely continuous curve $\gamma$ that satisfies $\gamma(0) = x_0$ and $\gamma(1) = x$.  The function $f$ is called a potential function for $\ca{D}$, and we also say $\ca{D}$ admits $f$ as its potential function, or that $\ca{D}$ is a conservative field for $f$. 
\end{defin}

The following two lemmas characterize the relationship between conservative field and Clarke subdifferential. 

\begin{lem}[Theorem 1 in \cite{bolte2021conservative}]
	\label{Le_Conservative_as_gradient}
	Let $f:\Rn \to \bb{R}$ be a potential function that admits $\D_f$ as its conservative field. Then $\D_f(x) = \{\nabla f(x)\}$ almost everywhere.  
\end{lem}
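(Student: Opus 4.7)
The plan is to proceed in three stages: establish local Lipschitzness of $f$, invoke Rademacher to get differentiability almost everywhere, and then use the defining property of a conservative field along line segments to pin down $\D_f$ at differentiability points.

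First I would show that $f$ is locally Lipschitz. Since $\D_f$ is graph-closed with nonempty compact values, it is locally bounded (as noted after Definition~\ref{Defin_conservative_field}). Pick any $x_0$ and a convex neighborhood $V$ of $x_0$ on which $\sup_{y \in V} \norm{\D_f(y)} \leq M$. For any $x,y \in V$ take $\gamma(t) = (1-t)x + ty$; the integral representation \eqref{Eq_Defin_CF} applied between $x$ and $y$ gives $|f(y)-f(x)| \leq \int_0^1 \norm{\D_f(\gamma(t))} \cdot \norm{y-x}\, \mathrm{d}t \leq M\norm{y-x}$. By Rademacher's theorem, $f$ is then Fréchet differentiable at Lebesgue-almost every $x$.

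Next, fix such a differentiability point $x$ and an arbitrary direction $v \in \Rn$. I would apply the conservative-field chain rule along the line $\gamma(t) = x + tv$. Concretely, Definition~\ref{Defin_conservative_field_path_int} implies that for each $s$ small enough,
\begin{equation*}
f(x+sv) - f(x) = \int_0^s \max_{d \in \D_f(x+tv)} \inner{v,d}\, \mathrm{d}t = \int_0^s \min_{d \in \D_f(x+tv)} \inner{v,d}\, \mathrm{d}t.
\end{equation*}
Dividing by $s$ and letting $s \to 0^+$, the left-hand side tends to $\inner{v, \nabla f(x)}$ because $f$ is differentiable at $x$. For the right-hand side I would use upper semicontinuity of $d \mapsto \max_{d\in \D_f(\cdot)} \inner{v,d}$ and lower semicontinuity of the min (both inherited from the graph-closedness and local boundedness of $\D_f$) together with the Lebesgue differentiation theorem. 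Taking $s \to 0^+$ and $s\to 0^-$ and combining yields $\max_{d \in \D_f(x)} \inner{v,d} = \min_{d \in \D_f(x)} \inner{v,d} = \inner{v, \nabla f(x)}$ for every direction $v$. Since the support function of the compact set $\D_f(x)$ coincides with the linear functional $v \mapsto \inner{v, \nabla f(x)}$, we conclude that $\D_f(x) = \{\nabla f(x)\}$.

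The main obstacle is the passage to the limit $s \to 0$ in the max/min integral: the equality in \eqref{Eq_Defin_CF} only involves averages over an interval, and one must justify pointwise recovery at $t=0$. A clean way is to first argue that the scalar maps $t \mapsto \max_{d\in \D_f(x+tv)} \inner{v,d}$ and $t \mapsto \min_{d\in \D_f(x+tv)} \inner{v,d}$ are bounded and upper/lower semicontinuous respectively (so Lebesgue-measurable), apply Lebesgue's differentiation theorem to extract limits of the integral averages at almost every $s=0$ along the ray, and finally use a Fubini/density argument on the product of the directions and the base point to see that the exceptional null set can be arranged independently of $v$. Everything else in the argument is routine once the chain-rule-type limit at $t=0$ is established.
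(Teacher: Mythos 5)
The paper does not prove this lemma; it imports it verbatim as Theorem 1 of the cited reference, so your attempt can only be compared with the standard argument there. Your overall route --- local Lipschitzness from local boundedness of $\D_f$ plus the path-integral representation, Rademacher's theorem, and then an analysis of the max/min integrands along line segments combined with a Fubini argument over base points and a countable dense set of directions --- is the right one and coincides with the proof in that reference.

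However, the pivotal step in your middle paragraph is wrong as stated. Upper semicontinuity of $t\mapsto\max_{d\in\D_f(x+tv)}\inner{v,d}$ at $t=0$ only gives $\limsup_{s\to0}\frac{1}{s}\int_0^s\max_{d\in\D_f(x+tv)}\inner{v,d}\,\mathrm{d}t\leq\max_{d\in\D_f(x)}\inner{v,d}$, hence $\inner{v,\nabla f(x)}\leq\max_{d\in\D_f(x)}\inner{v,d}$; lower semicontinuity of the min gives $\inner{v,\nabla f(x)}\geq\min_{d\in\D_f(x)}\inner{v,d}$. These are exactly the trivial inequalities $\min\leq\inner{v,\nabla f(x)}\leq\max$ (the $s\to0^-$ limit, or replacing $v$ by $-v$, adds nothing new), which amount to $\nabla f(x)\in\conv(\D_f(x))$ and do not force $\D_f(x)$ to be a singleton. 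The mechanism that actually does the work is different: since $\int_0^s\max_{d\in\D_f(x+tv)}\inner{v,d}\,\mathrm{d}t=\int_0^s\min_{d\in\D_f(x+tv)}\inner{v,d}\,\mathrm{d}t$ for every $s$ while the max dominates the min pointwise, the two integrands must coincide for a.e.\ $t$ on each line; at a point that is simultaneously a Lebesgue point of both integrands along the line and a differentiability point of $f$, their common value equals the directional derivative $\inner{v,\nabla f}$. Your final paragraph essentially contains this repair (Lebesgue differentiation at a.e.\ point of the ray, Fubini to upgrade ``a.e.\ $t$ on each line'' to ``a.e.\ $y\in\Rn$'', and a countable dense set of directions together with continuity of support functions of compact sets to make the exceptional null set independent of $v$), so the proof is salvageable --- but that paragraph must replace, not merely supplement, the semicontinuity argument, which on its own cannot close the proof.
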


\begin{lem}[Corollary 1 in \cite{bolte2021conservative}]
	Let $f:\Rn \to \bb{R}$ be a potential function that admits $\D_f$ as its conservative field. Then $\partial f$ is a conservative field for $f$, and for all $x \in \Rn$, it holds that 
	\begin{equation}
		\partial f(x) \subseteq \conv(\D_f(x)). 
	\end{equation} 
\end{lem}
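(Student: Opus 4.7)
The plan is to first prove the inclusion $\partial f(x) \subseteq \conv(\D_f(x))$, and then bootstrap this to verify that $\partial f$ itself satisfies the axioms of a conservative field for $f$.

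For the inclusion, I would begin by observing that $f$, as a potential of the locally bounded set-valued map $\D_f$, is locally Lipschitz continuous. This follows from applying the path-integral formula in Definition~\ref{Defin_conservative_field_path_int} along line segments and bounding the resulting integrand by a local bound on $\norm{\D_f}$. Rademacher's theorem then yields differentiability of $f$ almost everywhere, and the Clarke subdifferential admits the standard characterization
\[
\partial f(x) = \conv\Big\{ \lim_{k \to \infty} \nabla f(x_k) : x_k \to x, \; \nabla f(x_k) \text{ exists}, \; x_k \notin N \Big\}
\]
valid for any Lebesgue null set $N$. I would invoke Lemma~\ref{Le_Conservative_as_gradient} to select for $N$ the null set on whose complement $\D_f$ coincides with $\{\nabla f\}$. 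Then $\nabla f(x_k) \in \D_f(x_k)$ along the approximating sequences, and the closed-graph property of $\D_f$ forces each limit to lie in $\D_f(x)$. Taking convex hulls yields the desired inclusion.

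To conclude that $\partial f$ is a conservative field for $f$, I would appeal to the standard fact that $\partial f$ is graph-closed with nonempty compact convex values whenever $f$ is locally Lipschitz. The path-integral condition of Definition~\ref{Defin_conservative_field} would then follow by a squeezing argument: for any absolutely continuous loop $\gamma:[0,1] \to \Rn$ with $\gamma(0)=\gamma(1)$, the just-proven inclusion together with linearity of $d \mapsto \inner{\gamma'(t), d}$ gives
\[
\min_{d \in \D_f(\gamma(t))} \inner{\gamma'(t), d} \leq \min_{v \in \partial f(\gamma(t))} \inner{\gamma'(t), v} \leq \max_{v \in \partial f(\gamma(t))} \inner{\gamma'(t), v} \leq \max_{d \in \D_f(\gamma(t))} \inner{\gamma'(t), d},
\]
and integrating over $[0,1]$ together with the fact that $f(\gamma(1)) - f(\gamma(0)) = 0$ (so both the min- and max-integrals on the outside vanish by Definition~\ref{Defin_conservative_field_path_int}) pins the middle integrals at zero as well, which is exactly the defining property of a conservative field.

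The main obstacle will be the measure-theoretic care required in the first step, namely justifying the freedom to choose the null set $N$ in the Clarke subdifferential characterization so that Lemma~\ref{Le_Conservative_as_gradient} can be exploited along the approximating sequences. Once this technicality is settled, graph-closedness of $\D_f$ produces the inclusion essentially at once, and the conservative field property of $\partial f$ then follows almost automatically from the squeezing argument above.
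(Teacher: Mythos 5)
Your proof is correct and follows essentially the same route as the cited source (Corollary 1 of Bolte--Pauwels), which the paper invokes without reproducing a proof: Lemma~\ref{Le_Conservative_as_gradient} plus the null-set freedom in Clarke's gradient-limit characterization and the closed graph of $\D_f$ give the inclusion, and the min/max squeeze along loops gives conservativity of $\partial f$. The one technical point you flag --- that the excluded null set in the Clarke characterization may be chosen arbitrarily --- is indeed the crux, and it is covered by the standard theorem in \cite{clarke1990optimization}.
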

From the above two lemmas, we can conclude that the concept of conservative field can be regarded as a generalization of Clarke subdifferential. Therefore, conservative field can be applied to characterize  stationarity, as illustrated in the following definition. 
\begin{defin}
	Let $f:\Rn \to \bb{R}$ be a potential function that admits $\D_f$ as its conservative field, then we say $x$ is a $\D_f$-stationary point of $f$ if $0 \in \conv(\D_f(x))$. In particular, we say $x$ is a $\partial f$-stationary point of $f$ if $0 \in \partial f(x)$. 
\end{defin}

As demonstrated in \cite{bolte2021conservative},  a conservative field can be regarded as a generalization of Clarke subdifferential. Therefore, a function is differentiable in the sense of  conservative field if it admits a conservative field for which Definition \ref{Defin_conservative_field_path_int} holds true. Such functions are called path-differentiable \cite[Definition 3]{bolte2021conservative}, and we present the detailed definition as follows. 
\begin{defin}
	Given a locally Lipschitz continuous function $f: \Rn \to \bb{R}$, we say $f$ is path-differentiable if $f$ is the potential function of a conservative field on $\Rn$. 
\end{defin}

It is worth mentioning that the class of path-differentiable functions is general enough to cover the objectives in a wide range of real-world problems. As shown in \cite[Section 5.1]{davis2020stochastic}, any Clarke regular function is path-differentiable. Beyond Clarke regular functions, another important class of path-differentiable functions are functions whose graphs are definable in an $o$-minimal structure \cite[Definition 5.10]{davis2020stochastic}. Usually, the $o$-minimal structure is fixed, and we simply call these functions definable. As demonstrated in \cite{van1996geometric}, any definable function admits a Whitney $C^s$ stratification \cite[Definition 5.6]{davis2020stochastic} for any $s \geq 1$, hence is path-differentiable  \cite{bolte2021conservative,davis2020stochastic}. To characterize the class of definable functions, \cite{davis2020stochastic,bolte2021conservative,bolte2022differentiating} shows that numerous common activation functions and dissimilarity functions are all definable. Furthermore, since definability is preserved under finite summation and composition  \cite{bolte2021conservative,davis2020stochastic},  for any neural network built from definable blocks, its loss function is definable and thus belongs to the class of path-differentiable functions.

Moreover, \cite{bolte2007clarke} shows that any Clarke subdifferential of definable functions is definable. Consequently, for any neural network constructed from definable blocks, the conservative field corresponding to the AD algorithms can be chosen as a definable set-valued mapping formulated by compositing the Clarke subdifferentials of all its building blocks \cite{bolte2021conservative}. The following proposition shows that the definability of $f$ and $\mathcal{D}_f$ leads to the nonsmooth Morse–Sard property \cite{bolte2007clarke} for \eqref{Prob_Ori}. 
\begin{prop}[Theorem 5 in \cite{bolte2021conservative}]
	\label{Prop_definable_regularity}
	Let $f$ be a potential function that admits $\D_f$ as its conservative field. Suppose both $f$ and $\D_f$ are definable over $\Rn$, then the set $\{f(x): 0\in \conv(\D_f(x)) \}$ is finite. 
\end{prop}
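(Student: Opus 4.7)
The plan is to combine an o-minimal Whitney stratification of $\Rn$ with a projection formula that ties the conservative field $\D_f$ to the geometry of each stratum, following the strategy of the nonsmooth Morse--Sard theorem of \cite{bolte2007clarke}. First I would verify that the set $S := \{x \in \Rn : 0 \in \conv(\D_f(x))\}$ is itself definable. Since $\D_f$ is definable by hypothesis and locally bounded (as noted after Definition \ref{Defin_conservative_field}), Carath\'eodory's theorem reduces membership in $\conv(\D_f(x))$ to a first-order condition on at most $n+1$ elements of $\D_f(x)$, so $S$ lies in the o-minimal structure and $f(S) \subseteq \bb{R}$ is definable. Because definable subsets of $\bb{R}$ are finite unions of points and open intervals, it suffices to show that $f(S)$ has empty interior, which amounts to proving that $f$ is locally constant on $S$.

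Next I would invoke the o-minimal Whitney stratification theorem to decompose $\Rn$ into finitely many $C^1$ smooth strata $\{M_j\}$ compatible with $S$, refining so that each restriction $f|_{M_j}$ is $C^1$. It then suffices to prove that $f$ is constant on every stratum $M$ contained in $S$. Fix such an $M$ and a smooth curve $\gamma : (-\varepsilon, \varepsilon) \to M$. Extending $\gamma$ to an absolutely continuous curve and invoking Definition \ref{Defin_conservative_field_path_int} with the two-sided equality of the max and min, one obtains that for almost every $t$,
\begin{equation*}
\tfrac{d}{dt} f(\gamma(t)) = \inner{\dot\gamma(t), v} \quad \text{for every } v \in \D_f(\gamma(t)),
\end{equation*}
so in particular $\inner{\dot\gamma(t), v}$ is independent of the choice $v \in \D_f(\gamma(t))$.

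The technical heart of the argument is a projection formula: for every stratum $M$ and every $x \in M$, the orthogonal projection onto $T_x M$ of any element of $\D_f(x)$ (equivalently, of $\conv(\D_f(x))$) equals $\nabla(f|_M)(x)$. Granting this, the conclusion is immediate: for $x \in S \cap M$ one has $0 \in \conv(\D_f(x))$, so its projection onto $T_x M$, which is $\nabla(f|_M)(x)$, must vanish. Hence $f|_M$ has identically zero gradient on $S \cap M$, and since $S \cap M$ is a definable subset of the smooth manifold $M$ with only finitely many connected components (by o-minimal cell decomposition), $f$ takes at most finitely many values on $S \cap M$. Summing over the finitely many strata yields finiteness of $f(S)$.

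The main obstacle is establishing the projection formula. One direction, showing that tangential projections cannot exceed $\nabla(f|_M)$, comes directly from evaluating the conservative field identity along curves inside $M$. The reverse direction requires constructing curves that leave $M$ and return to it, and exploiting the Whitney (a) condition together with graph-closedness and local boundedness of $\D_f$ to pass to the limit and rule out any normal-to-$M$ component of $\conv(\D_f(x))$ contributing tangentially. This step is where the definability assumption on $\D_f$, rather than just on $f$, is genuinely used.
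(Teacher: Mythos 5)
Your proposal is correct and follows essentially the same route as the source this proposition is quoted from: the paper itself gives no proof, deferring entirely to \cite{bolte2021conservative}, whose argument for Theorem 5 is exactly your combination of (i) definability of the critical set via Carath\'eodory, (ii) a Whitney stratification on whose strata the tangential projection of $\D_f$ collapses to $\nabla (f|_M)$ (their ``variational stratification'', Theorem 4 there, which as you note is where definability of $\D_f$ itself is used), and (iii) the definable Morse--Sard conclusion that $f$ is constant on each of the finitely many connected components of the critical set within each stratum. The only step you leave as a sketch, the projection formula, is precisely the cited reference's key lemma, so no new gap is introduced.
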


\subsection{Differential inclusion and stochastic subgradient methods}
In this subsection, we introduce some fundamental concepts related to the stochastic approximation technique that are essential for the proofs presented in this paper. The concepts discussed in this subsection are mainly from \cite{benaim2005stochastic}. Interested readers could refer to \cite{benaim2006dynamics,benaim2005stochastic,borkar2009stochastic,davis2020stochastic} for more details on the stochastic approximation technique.

\begin{defin}
	For any locally bounded set-valued mapping $\ca{D}: \Rn \rightrightarrows \Rn$ that is nonempty compact convex valued and has closed graph,  we say that an absolutely continuous path $x(t)$ in $\Rn$ is a solution for the differential inclusion 
	\begin{equation}
		\label{Eq_def_DI}
		\frac{\mathrm{d} x}{\mathrm{d}t} \in \ca{D}(x),
	\end{equation}
	with initial point $x_0$ if $x(0) = x_0$, and $\dot{x}(t) \in \ca{D}(x(t))$ holds for almost every $t\geq 0$. 
\end{defin}

\begin{defin}
	\label{Defin_delta_expansion}
	For any given set-valued mapping $\D: \Rn \rightrightarrows \Rn$ and any constant $\delta \geq 0$,  the set-valued mapping $\D^{\delta}$ is defined as
	\begin{equation}
		\D^{\delta}(x) := \{ w \in \Rn: \exists z \in \bb{B}_{\delta}(x), \, \mathrm{dist}(w, \ca{D}(z))\leq \delta \}.
	\end{equation}
\end{defin}

\begin{defin}
	\label{Defin_Lyapunov_function}
	Let $\ca{B} \subset \Rn$ be a closed set. A continuous function $\phi:\Rn \to \bb{R}$ is referred to as a Lyapunov function for the differential inclusion \eqref{Eq_def_DI}, with the stable set $\ca{B}$, if it satisfies the following conditions:
	\begin{enumerate}
		\item For any $\gamma$ that is a solution for \eqref{Eq_def_DI} with $\gamma(0) \in \ca{B}$, it holds that $\phi(\gamma(t)) \leq \phi(\gamma(0))$ for any $t\geq0$.
		\item For any $\gamma$ that is a solution for \eqref{Eq_def_DI} with $\gamma(0) \notin \ca{B}$, it holds that $\phi(\gamma(t)) < \phi(\gamma(0))$ for any $t>0$.
	\end{enumerate}
\end{defin}

The following proposition illustrates that $f$ is a Lyapunov function for the differential inclusion $\frac{\mathrm{d} x}{\mathrm{d}t} \in - \ca{D}_f(x)$. The proof of the following proposition directly follows from \cite{bolte2021conservative}, hence is omitted for simplicity. 
\begin{prop}
	\label{Prop_Lyapunov_SGD}
	Suppose $f$ is a path-differentiable function $f$ that admits $\D_f$ as its conservative field, then $f$ is a Lyapunov function for the differential inclusion $\frac{\mathrm{d} x}{\mathrm{d}t} \in - \ca{D}_f(x)$ with the stable set $\{x \in \Rn: 0 \in \D_f(x)\}$.
\end{prop}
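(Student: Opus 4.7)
My plan is to derive both Lyapunov conditions in Definition \ref{Defin_Lyapunov_function} directly from the two equal expressions for the potential function in Definition \ref{Defin_conservative_field_path_int}. Given an absolutely continuous curve $\gamma:[0,T]\to\Rn$, that definition gives
\begin{equation*}
f(\gamma(t)) - f(\gamma(0)) = \int_0^t \max_{d \in \D_f(\gamma(s))} \inner{\dot\gamma(s),d}\,\mathrm{d}s = \int_0^t \min_{d \in \D_f(\gamma(s))} \inner{\dot\gamma(s),d}\,\mathrm{d}s
\end{equation*}
for every $t\in[0,T]$. Since the integrand of the first representation dominates that of the second pointwise, their equal integrals on every subinterval force
$\max_{d \in \D_f(\gamma(s))} \inner{\dot\gamma(s),d} = \min_{d \in \D_f(\gamma(s))} \inner{\dot\gamma(s),d}$
for almost every $s$. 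In other words, $s\mapsto\inner{\dot\gamma(s),d}$ takes the same value for all $d\in\D_f(\gamma(s))$ almost everywhere; this is the key fact I will use.

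Now suppose $\gamma$ solves the differential inclusion $\dot\gamma\in-\D_f(\gamma)$. For a.e.\ $s$ choose $d_s\in\D_f(\gamma(s))$ with $\dot\gamma(s)=-d_s$. Applying the pointwise constancy to $d=d_s$ gives $\inner{\dot\gamma(s),d}=\inner{-d_s,d_s}=-\norm{d_s}^2$ for every $d\in\D_f(\gamma(s))$, and substituting into the path integral yields
\begin{equation*}
f(\gamma(t)) - f(\gamma(0)) = -\int_0^t \norm{d_s}^2\,\mathrm{d}s \;\leq\; 0, \qquad t\geq 0.
\end{equation*}
This establishes condition (1) of Definition \ref{Defin_Lyapunov_function} and in fact shows that $f\circ\gamma$ is nonincreasing along every trajectory, regardless of the initial point.

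For condition (2), assume $\gamma(0)\notin\{x:0\in\D_f(x)\}$, so $\delta:=\mathrm{dist}(0,\D_f(\gamma(0)))>0$. A conservative field has closed graph, compact values, and is locally bounded (Remark 3 of \cite{bolte2021conservative}); hence it is outer/upper semicontinuous, and there exists a neighborhood $V$ of $\gamma(0)$ on which every element of $\D_f(y)$ has norm at least $\delta/2$. By continuity of $\gamma$ we can pick $\tau>0$ with $\gamma(s)\in V$ for all $s\in[0,\tau]$, so $\norm{d_s}\geq\delta/2$ a.e.\ on $[0,\tau]$, and the identity above gives $f(\gamma(t))-f(\gamma(0))\leq -\delta^{2}t/4<0$ for $t\in(0,\tau]$. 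Combined with the global nonincrease of $f\circ\gamma$ obtained in the previous step, this yields $f(\gamma(t))<f(\gamma(0))$ for every $t>0$, which is condition (2).

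The main obstacle is the passage from equal integrals of $\max$ and $\min$ to pointwise equality on $\D_f(\gamma(s))$: it requires verifying measurability of both integrands (standard, via a measurable selection theorem, since $\D_f$ is closed-graphed and compact-valued) and then invoking the fact that a nonnegative measurable function whose integral vanishes on every interval is zero almost everywhere. The rest consists of routine estimates once this pointwise identity is in hand.
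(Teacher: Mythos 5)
Your proof is correct and is essentially the standard argument from \cite{bolte2021conservative} that the paper invokes when it omits the proof: the equality of the $\max$- and $\min$-path-integrals on every subinterval forces $\inner{\dot\gamma(s),d}$ to be constant over $d\in\D_f(\gamma(s))$ for a.e.\ $s$, which yields the descent identity $f(\gamma(t))-f(\gamma(0))=-\int_0^t\norm{\dot\gamma(s)}^2\,\mathrm{d}s$ along trajectories of $\dot\gamma\in-\D_f(\gamma)$, and the strict decrease off the stable set follows from the closed graph and local boundedness of $\D_f$ exactly as you argue. No gaps.
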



\begin{defin}
	\label{Defin_perturbed_solution}
	We say an absolutely continuous function $\gamma$
	is a perturbed solution to \eqref{Eq_def_DI}  if there exists a locally integrable function $u: \bb{R}_+ \to \Rn$, such that 
	\begin{itemize}
		\item For any $T>0$, it holds that $\lim\limits_{t \to \infty} \sup\limits_{0\leq l\leq T} \norm{\int_{t}^{t+l} u(s) ~\mathrm{d}s} = 0$. 
		\item There exists $\delta: \bb{R}_+ \to \bb{R}$ such that $\lim\limits_{t \to \infty} \delta(t) = 0$ and $\dot{\gamma}(t) - u(t) \in \D^{\delta(t)}(\gamma(t))$. 
	\end{itemize}
\end{defin}

Now consider the sequence $\{\xk\}$ generated by the  following updating scheme,  
\begin{equation}
	\label{Eq_def_Iter}
	\xkp = \xk + \eta_k(d_k + \xi_k),
\end{equation}
where $\{\eta_k\}$ is a diminishing positive sequence of real numbers. 
We define the  (continuous-time) interpolated process of $\{\xk\}$ generated by \eqref{Eq_def_Iter} as follows. 
\begin{defin}
	The  (continuous-time) interpolated process of $\{\xk\}$ generated by \eqref{Eq_def_Iter} is the mapping $w: \bb{R}_+ \to \Rn$ such that 
	\begin{equation}
		w(\lambda_i + s) := x_{i} + \frac{s}{\eta_i} \left( x_{i+1} - x_{i} \right), \quad s\in[0, \eta_i). 
	\end{equation}
	Here $\lambda_0 := 0$, and $\lambda_i := \sum_{k = 0}^{i-1} \eta_k$ for $i\geq 1$.
\end{defin}

The following lemma is an extension of \cite[Proposition 1.3]{benaim2005stochastic}, which allows for inexact evaluations of the set-valued mapping $\D$. It shows that the interpolated process of $\{\xk\}$ from \eqref{Eq_def_Iter} is a perturbed solution of the differential inclusion \eqref{Eq_def_DI}.

\begin{lem}
	\label{Le_interpolated_process}
	Let $\ca{D}: \Rn \rightrightarrows \Rn$ be a locally bounded set-valued mapping that is nonempty compact convex valued with closed graph.
	Suppose the following conditions hold in \eqref{Eq_def_Iter}:
	\begin{enumerate}
		\item For any $T> 0$, it holds that $\lim\limits_{s \to \infty} \sup\limits_{s\leq i \leq \Lambda(\lambda_s + T)}\norm{ \sum_{k = s}^{i} \eta_k \xi_k} =0$. 
		\item There exist a positive sequence $\{\delta_k\}$  such that $\lim_{k\to \infty} \delta_k = 0$ and $d_k \in \D^{\delta_k}(\xk)$.
		\item $\sup_{k \geq 0} \norm{\xk}<\infty$, $\sup_{k \geq 0} \norm{d_k} < \infty$. 
	\end{enumerate}
	Then the interpolated process of $\{\xk\}$ is a perturbed solution for \eqref{Eq_def_DI}. 
\end{lem}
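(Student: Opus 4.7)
The plan is to follow the template of the proof of Proposition 1.3 in \cite{benaim2005stochastic}, adapted to accommodate the inexact evaluations captured by Condition 2. Let $w$ be the interpolated process of $\{\xk\}$ and define the candidate noise function
\[
u(t) := \xi_i \qquad \text{for } t \in [\lambda_i, \lambda_{i+1}), \, i\geq 0,
\]
so that on each interval $(\lambda_i,\lambda_{i+1})$ one has $\dot{w}(t) = (x_{i+1}-x_i)/\eta_i = d_i + \xi_i$ by \eqref{Eq_def_Iter}, hence $\dot{w}(t) - u(t) = d_i$ almost everywhere. Local integrability of $u$ is immediate from Condition 3 once we note, as explained below, that $\{\xi_i\}$ is locally bounded as a consequence of Condition 1.

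First I would verify the integral condition on $u$. For any $t \geq 0$, let $s = s(t)$ be the unique index with $t \in [\lambda_s, \lambda_{s+1})$. For $l \in [0,T]$ set $i = i(t,l) = \Lambda(t+l)$. A direct computation yields
\[
\int_t^{t+l} u(\tau)\,\mathrm{d}\tau = (\lambda_{s+1}-t)\xi_s + \sum_{k=s+1}^{i-1} \eta_k \xi_k + (t+l-\lambda_i)\xi_i,
\]
which can be rewritten as $\sum_{k=s}^{i}\eta_k\xi_k$ minus two boundary contributions of magnitudes bounded by $\eta_s\norm{\xi_s}$ and $\eta_i\norm{\xi_i}$. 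Applying Condition 1 with $i=s$ shows $\eta_s\norm{\xi_s}\to 0$, and Condition 1 itself controls the partial sum uniformly in $l \in [0,T]$. Combining these gives $\lim_{t\to\infty}\sup_{0\leq l\leq T}\norm{\int_t^{t+l}u(\tau)\,\mathrm{d}\tau}=0$, which is the first bullet of Definition \ref{Defin_perturbed_solution}.

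Next I would construct $\delta(t)$ witnessing the inclusion $\dot{w}(t)-u(t) \in \D^{\delta(t)}(w(t))$. For $t \in [\lambda_i,\lambda_{i+1})$, set
\[
\delta(t) := \delta_i + \norm{w(t) - \xk[i]}.
\]
Since $w$ is linear on $[\lambda_i,\lambda_{i+1})$, we have $\norm{w(t)-\xk[i]} \leq \eta_i \norm{d_i+\xi_i} \leq \eta_i\norm{d_i} + \eta_i\norm{\xi_i}$. Condition 3 gives $\eta_i\norm{d_i}\to 0$ via the boundedness of $\{d_i\}$ together with $\eta_i\to 0$, while $\eta_i\norm{\xi_i}\to 0$ follows from Condition 1 by taking the single-term sum $s=i$. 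Combined with $\delta_i\to 0$ from Condition 2, this yields $\delta(t)\to 0$. To verify the inclusion, Condition 2 supplies some $z \in \bb{B}_{\delta_i}(\xk[i])$ with $\mathrm{dist}(d_i,\D(z))\leq\delta_i$. Then by the triangle inequality $\norm{z - w(t)} \leq \delta_i + \norm{w(t)-\xk[i]} = \delta(t)$ and $\mathrm{dist}(d_i,\D(z))\leq\delta_i\leq\delta(t)$, so $d_i \in \D^{\delta(t)}(w(t))$ exactly as required by Definition \ref{Defin_delta_expansion}.

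The main obstacle I anticipate is the careful bookkeeping needed to push Benaim's argument past the strict containment $d_k \in \D(\xk)$ to the relaxed form $d_k \in \D^{\delta_k}(\xk)$; the construction of $\delta(t)$ as the sum of the \emph{a priori} inexactness $\delta_i$ and the \emph{a posteriori} drift $\norm{w(t)-\xk[i]}$ is the key device, and verifying that both pieces genuinely vanish requires combining Conditions 1--3 in the manner above. Once $u$ and $\delta$ are in place, absolute continuity of $w$ is immediate from piecewise linearity, and the two bullets of Definition \ref{Defin_perturbed_solution} are satisfied, completing the proof.
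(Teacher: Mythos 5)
Your proof is correct and is exactly the argument the paper has in mind: the paper states this lemma without proof, deferring to Proposition 1.3 of Bena\"im--Hofbauer--Sorin, and your construction of the piecewise-constant noise $u(t)=\xi_i$ together with $\delta(t)=\delta_i+\norm{w(t)-x_i}$ is the standard adaptation that absorbs both the a priori inexactness and the within-interval drift. The only points worth tidying are cosmetic: the partial-sum bound should formally invoke Condition 1 with $T$ replaced by $T+\sup_k\eta_k$ (since $t+l$ may exceed $\lambda_s+T$ by at most $\eta_s$), and local integrability of $u$ follows simply because $u$ is piecewise constant with finitely many pieces on any compact interval, not from any boundedness of $\{\xi_i\}$.
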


The following theorem summarizes the results in \cite{benaim2005stochastic}, which illustrates the convergence of $\{\xk\}$ generated by \eqref{Eq_def_Iter}. It is worth mentioning that Theorem \ref{The_convergence_beniam} is directly derived from putting \cite[Proposition 3.27]{benaim2005stochastic} and \cite[Theorem 3.6]{benaim2005stochastic} together. Therefore, we omit the proof of Theorem \ref{The_convergence_beniam} for simplicity. 

\begin{theo}
	\label{The_convergence_beniam}
	Let $\ca{D}: \Rn \rightrightarrows \Rn$ be a locally bounded set-valued mapping that is nonempty compact convex valued with closed graph. For any sequence $\{\xk\}$, suppose there exist a continuous function $\phi: \Rn \to \bb{R}$ and a closed subset $\ca{B}$ of $\Rn$ such that
	\begin{enumerate}
		\item $\phi$ is bounded from below, and the set $\{\phi(x) \,:\,x\in\ca{B}\}$
		has empty interior in $\bb{R}$.
		\item $\phi$ is a Lyapunov function for the differential inclusion \eqref{Eq_def_DI} that admits $\ca{B}$ as its stable set. 
		\item The interpolated process of $\{\xk\}$ is a perturbed solution of  \eqref{Eq_def_DI}. 
	\end{enumerate}
	Then any cluster point of $\{\xk\}$ lies in $\ca{B}$, and the sequence $\{\phi(\xk)\}$ converges. 
\end{theo}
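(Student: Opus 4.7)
The plan is to invoke the standard machinery of the stochastic approximation theory of Benaim--Hofbauer--Sorin: reduce the discrete‐time assertion about $\{\xk\}$ to a continuous‐time statement about the limit set of the interpolated process $w$, and then exploit the Lyapunov structure. I would first observe that the set of cluster points of $\{\xk\}$ coincides with the $\omega$-limit set
\[
L(w) \;:=\; \bigcap_{t\geq 0}\overline{\{\, w(s)\,:\, s\geq t\,\}}
\]
of its interpolated process, because $w$ interpolates the iterates linearly on intervals of length $\eta_k\to 0$.

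Next I would use the key property of perturbed solutions: if $w$ is a bounded perturbed solution of \eqref{Eq_def_DI}, then $L(w)$ is a nonempty, compact, connected set that is \emph{internally chain transitive} (ICT) for the flow induced by $\ca{D}$. (Boundedness here comes for free on any convergent subsequence, so the argument is carried out on the bounded piece of trajectory around each cluster point.) This is the analogue of the Benaim--Hofbauer--Sorin theorem for differential inclusions, and it is the pivotal reduction from the stochastic iteration to a purely deterministic fact about $\ca{D}$.

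Then I would apply the following Lyapunov principle: if $L$ is an ICT set for \eqref{Eq_def_DI} and $\phi$ is a continuous Lyapunov function for \eqref{Eq_def_DI} with stable set $\ca{B}$ such that $\phi(\ca{B})$ has empty interior in $\bb{R}$, then $L\subseteq\ca{B}$ and $\phi$ is constant on $L$. The proof of this sub‐fact is the one step that requires care: by ICT, for any two points $p,q\in L$ one can produce $(\varepsilon,T)$-chains of solution segments from $p$ to $q$ and back; along each solution segment $\phi$ is nonincreasing by Definition~\ref{Defin_Lyapunov_function}, so telescoping yields $\phi(p)=\phi(q)$. Hence $\phi(L)$ is a single value $c$. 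If some point of $L$ were outside $\ca{B}$, then a solution starting there would strictly decrease $\phi$, making $\phi(L)$ contain a nontrivial interval inside $\phi(\ca{B})$, contradicting the empty-interior hypothesis; therefore $L\subseteq\ca{B}$.

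Finally, to conclude convergence of $\{\phi(\xk)\}$, I would combine the above with the lower-boundedness of $\phi$. Any limit point of the bounded-below sequence $\{\phi(\xk)\}$ corresponds, via continuity of $\phi$, to $\phi$ evaluated at a cluster point of $\{\xk\}$; all such cluster points lie in $L\subseteq\ca{B}$ and $\phi\equiv c$ on $L$, so $\phi(\xk)\to c$. I expect the main technical obstacle to be the ICT step: one has to handle the inexact, set-valued nature of the driving mapping and control the chain-transitivity argument when $\ca{D}$ is only upper semicontinuous rather than single-valued Lipschitz. Everything else is essentially a bookkeeping consequence of Definitions~\ref{Defin_perturbed_solution} and \ref{Defin_Lyapunov_function}.
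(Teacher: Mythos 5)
Your overall architecture is exactly the paper's: the paper proves this theorem by citing Be\-na\"{\i}m--Hofbauer--Sorin, namely that the limit set of a bounded perturbed solution is internally chain transitive (their Theorem 3.6) and that an ICT set is contained in the stable set of a Lyapunov function whose value set on $\ca{B}$ has empty interior (their Proposition 3.27). Your reduction from cluster points of $\{\xk\}$ to the limit set $L(w)$, and your final passage to convergence of $\{\phi(\xk)\}$, are fine (modulo the boundedness of $\{\xk\}$, which both you and the paper treat as implicit).

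The genuine gap is in the one step you yourself flagged: your proof of the Lyapunov principle for ICT sets. Telescoping $\phi$ along an $(\varepsilon,T)$-chain from $p$ to $q$ does \emph{not} yield $\phi(q)\leq\phi(p)$ in the limit $\varepsilon\to 0$, because the number of jumps in such a chain is not bounded as $\varepsilon\to 0$; the accumulated jump errors need not vanish. Indeed, if that telescoping worked, $\phi$ would be constant on every ICT set with no hypothesis on $\phi(\ca{B})$ at all, which is false in general --- the empty-interior condition is needed precisely to establish constancy of $\phi$ on $L$ (the standard route: $L$ is compact, connected and invariant, so $\phi(L)$ is an interval; one shows $\phi(L)\subseteq\phi(L\cap\ca{B})\subseteq\phi(\ca{B})$, and an interval with empty interior is a point). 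Your subsequent step is also misstated: if $p\in L\setminus\ca{B}$, the strict decrease of $\phi$ along the solution through $p$ (which stays in $L$ by invariance of ICT sets, a fact you need but do not invoke) produces values of $\phi$ on $L$ below the constant value, contradicting constancy of $\phi$ on $L$ --- it does not produce ``a nontrivial interval inside $\phi(\ca{B})$,'' since those points need not lie in $\ca{B}$. If you intend to reprove Proposition 3.27 rather than cite it as the paper does, this is the part that must be redone.
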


Similar results under slightly different conditions can be found in \cite{borkar2009stochastic,davis2020stochastic,duchi2018stochastic}, while some recent works \cite{bianchi2021closed,bolte2022long} focus on analyzing the convergence of \eqref{Eq_def_Iter} under more relaxed conditions. Interested readers could refer to those works for details.

\section{Convergence with Non-diminishing $\{\eta_k\}$}
In this section, we prove the convergence properties of the framework \eqref{Eq_Framework} even though the sequence of stepsizes $\{\eta_k\}$ is assumed to be non-diminishing. 

\subsection{Convergence to $\D_g$-stationary points}

We first make the following assumptions on the quadratically regularized optimization problem \eqref{Prob_Ori}. 
\begin{assumpt}
	\label{Assumption_f}
	\begin{enumerate}
		\item $f$ is a path-differentiable function that admits a convex-valued set-valued mapping $\D_f$ as its conservative field. 
		\item The set $\{g(x): 0 \in \D_f(x) + \sigma x\}$ has empty interior in $\bb{R}$. 
		\item The function $g$ is bounded from below over $\Rn$. That is, $\inf_{x \in \Rn} g(x) >-\infty$. 
	\end{enumerate}
\end{assumpt}
As discussed in Section 2.3, the class of path-differentiable functions covers a great number of objective functions in real-world applications. In particular, for a wide range of common neural networks, their loss functions are definable and thus path-differentiable,  as demonstrated in \cite{bolte2021conservative,castera2021inertial,davis2020stochastic}.  As a result, Assumption \ref{Assumption_f}(1) is mild in practice. Moreover, Assumption \ref{Assumption_f}(2) is referred to as the nonsmooth weak Sard's property, which is commonly observed in various existing works \cite{bianchi2021closed,bolte2022subgradient,bolte2021conservative,castera2021inertial,davis2020stochastic,le2023nonsmooth} and is shown to be mild as demonstrated in \cite{bolte2021conservative,castera2021inertial,davis2020stochastic}.

Notice that the chain rule holds for conservative fields \cite[Lemma 5]{bolte2021conservative}, and it is easy to verify that $g$ is a path-differentiable function that admits $\D_f(x) + \sigma x$ as its conservative field. Therefore, in the rest of the paper, we fix the conservative field $\D_g:\Rn \rightrightarrows \Rn$ for the objective function $g$ in \eqref{Prob_Ori}  as: 
\begin{equation}
	\D_g(x) := \D_f(x) + \sigma x. 
\end{equation}

In the following lemma, we present some basic properties of $\D_g$. The proof of Lemma \ref{Le_Dg} straightforwardly follows from \cite[Corollary 4]{bolte2021conservative}, hence it is omitted for simplicity.  
\begin{lem}
	\label{Le_Dg}
	Suppose Assumption \ref{Assumption_f} holds. Then $g$ is a path-differentiable function, and $\D_g$ is a convex-valued graph-closed conservative field that admits $g$ as its potential function. 
\end{lem}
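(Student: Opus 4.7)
The plan is to verify the four assertions about $\D_g$ (namely that $g$ is path-differentiable, and that $\D_g$ is convex-valued, graph-closed, and conservative for $g$) by reducing everything to properties of the two summands $f$ and $\tfrac{\sigma}{2}\norm{x}^2$, together with the chain/sum rule for conservative fields cited as Corollary 4 of \cite{bolte2021conservative}.

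First I would set up the decomposition. The quadratic term $q(x) := \tfrac{\sigma}{2}\norm{x}^2$ is continuously differentiable with $\nabla q(x) = \sigma x$, so $\D_q(x) := \{\sigma x\}$ is trivially a (singleton-valued) conservative field for $q$: it is graph-closed since $x\mapsto \sigma x$ is continuous, convex-valued since singletons are convex, and the integral identity in Definition \ref{Defin_conservative_field} reduces to the fundamental theorem of calculus applied to $q\circ\gamma$. By Assumption \ref{Assumption_f}(1), $\D_f$ is a convex-valued conservative field for $f$, and by definition conservative fields are graph-closed. Thus both summands satisfy the hypotheses needed for the sum rule.

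Next I would apply the sum rule for conservative fields (Corollary 4 of \cite{bolte2021conservative}), which states that the pointwise Minkowski sum of two conservative fields is a conservative field for the sum of the corresponding potentials. Applied to $\D_f$ and $\D_q$, this gives at once that the set-valued mapping
\begin{equation*}
x \mapsto \D_f(x) + \D_q(x) = \D_f(x) + \sigma x = \D_g(x)
\end{equation*}
is a conservative field admitting $f + q = g$ as its potential function. In particular, $g$ is path-differentiable by the definition stated in Section \ref{Section_Nonsmooth_Analysis}.

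Finally I would verify the two structural properties explicitly. Convex-valuedness of $\D_g(x)$ follows because translating the convex set $\D_f(x)$ by the vector $\sigma x$ yields another convex set. Graph-closedness follows from a direct limiting argument: if $(x_k, w_k) \to (x^\star, w^\star)$ with $w_k \in \D_f(x_k) + \sigma x_k$, then $w_k - \sigma x_k \in \D_f(x_k)$ and $w_k - \sigma x_k \to w^\star - \sigma x^\star$, so graph-closedness of $\D_f$ gives $w^\star - \sigma x^\star \in \D_f(x^\star)$, i.e.\ $w^\star \in \D_g(x^\star)$. No step presents a real obstacle here; the only thing to be careful about is invoking the sum rule of \cite{bolte2021conservative} with the correct hypotheses (both $\D_f$ and $\D_q$ being conservative fields with locally bounded, graph-closed, nonempty compact convex values), all of which are either assumed in Assumption \ref{Assumption_f}(1) or obvious for the smooth quadratic term.
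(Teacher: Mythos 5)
Your proposal is correct and follows essentially the same route as the paper, which omits the proof and simply cites the sum rule for conservative fields (Corollary 4 of \cite{bolte2021conservative}); you have merely filled in the routine verifications (the quadratic term's gradient as a singleton conservative field, convexity of a translate, and the limiting argument for graph-closedness) that the paper leaves implicit.
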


We also need the following assumptions on the framework \eqref{Eq_Framework} for establishing its convergence properties.
\begin{assumpt}
	\label{Assumption_framework}
	\begin{enumerate}
		\item There exists constants $0< \varepsilon_v< M_v$ such that $ \varepsilon_v\leq H(\vk) \leq M_v$ holds for any $k \geq 0$. 
		\item The sequence $\{\xk\}$ is uniformly bounded almost surely. That is, there exists a constant $M_x$ such that  $\max\left\{\norm{m_0}, \sup_{k\geq 0} \norm{\xk} \right\}\leq M_x$ holds almost surely. 
		\item The sequences of stepsizes $\{\eta_k\}$ and $\{\theta_k\}$ are positive and satisfy
		\begin{equation}
			\inf_{k\geq 0} \eta_k > 0, \quad \sup_{k\geq 0} \eta_k < \frac{2}{\sigma M_v}, \quad \sum_{k = 0}^{\infty} \theta_k = \infty, \quad \lim_{k\to \infty} \theta_k \log(k) = 0. 
		\end{equation} 
		\item There exists a non-negative sequence $\{\delta_k\}$ such that $\lim_{k\to \infty} \delta_k = 0$ and $d_k \in \D_f^{\delta_k}(\xk)$. 
		\item  The sequence of noises $\{\xi_k\}$ is a uniformly bounded martingale difference sequence. That is, there exists a constant $M_{\xi}$ such that almost surely, $\sup_{k\geq 0} \norm{\xi_k} \leq M_{\xi}$, and $\bb{E}[\xi_{k+1}|\ca{F}_k] = 0$  for any $k\geq 0$. 
	\end{enumerate}
\end{assumpt}

Here we make some comments to the assumptions in Assumption \ref{Assumption_framework}. Assumption \ref{Assumption_framework}(1)-(2) assumes the uniform boundedness of $\{H(\vk)\}$ and $\{\xk\}$, which is a common assumption in various existing works \cite{benaim2005stochastic,bolte2021conservative,castera2021inertial}. In addition, later in Section 3.2, we provide some sufficient conditions that guarantee the validity of Assumption \ref{Assumption_framework}(1)-(2).  Assumption \ref{Assumption_framework}(3) requires the stepsizes $\{\eta_k\}$ to be non-diminishing, while assumes that $\{\theta_k\}$ is diminishing in the rate of $o(1/\log(k))$. Since $1/\log(k)$ decays very slowly throughout the iterations, the assumptions on the stepsizes $\{\eta_k\}$ and $\{\theta_k\}$  are mild in practice. Assumption \ref{Assumption_framework}(4) characterizes how $d_k$ approximates $\D_f(\xk)$. Furthermore, Assumption \ref{Assumption_framework}(5) assumes that the evaluation noises $\{\xi_k\}$ is a  uniformly bounded martingale difference sequence. As demonstrated in \cite{bolte2021conservative,castera2021inertial}, Assumption \ref{Assumption_framework}(5)  holds when $f$ follows a finite-sum formulation, hence it is mild in practical applications of \eqref{Prob_Ori}.

We begin our theoretical analysis with Lemma \ref{Le_UB_mk_gk}, which shows that the sequence $\{\mk\}$ and $\{g_k\}$ are uniformly bounded.  Lemma \ref{Le_UB_mk_gk} directly follows from the uniform boundedness of $\{\xk\}$ and $\{\xi_k\}$ and the fact that $\D_f$ is locally bounded, hence we omit its proof for simplicity. 
\begin{lem}
	\label{Le_UB_mk_gk}
	Suppose Assumption \ref{Assumption_f} and Assumption \ref{Assumption_framework} hold. Then there exists a constant $M_d>0$ such that  $\sup_{k\geq 0} \{\norm{g_k} + \norm{\mk}\} \leq M_d$ holds almost surely. 
\end{lem}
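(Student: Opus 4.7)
The plan is straightforward: bound $\{g_k\}$ uniformly using the local boundedness of $\D_f$ on the compact set containing $\{\xk\}$, and then propagate this bound to $\{\mk\}$ via the momentum recursion, which is eventually a convex combination.

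First I would invoke Assumption \ref{Assumption_framework}(2) to work on the almost-sure event on which $\sup_{k\geq 0}\norm{\xk} \leq M_x$, so that every $\xk$ lies in the closed ball of radius $M_x$. Since every conservative field is locally bounded (see the remark following Definition \ref{Defin_conservative_field}), there exists a constant $L_f > 0$ with $\norm{\D_f(x)} \leq L_f$ for all $x$ with $\norm{x} \leq M_x + 1$. Combined with $\delta_k \to 0$ from Assumption \ref{Assumption_framework}(4) and the very definition of the $\delta$-expansion in Definition \ref{Defin_delta_expansion}, this yields $\norm{d_k} \leq L_f + \delta_k$ once $\delta_k \leq 1$; absorbing the finitely many early indices into a larger constant gives $\sup_{k \geq 0}\norm{d_k} < \infty$. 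Together with the almost-sure uniform bound $\norm{\xi_{k+1}} \leq M_{\xi}$ from Assumption \ref{Assumption_framework}(5), this produces a constant $M_g$ with $\sup_{k \geq 0} \norm{g_k} \leq M_g$ almost surely.

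For the momentum sequence, since $\lim_{k \to \infty} \theta_k \log(k) = 0$ forces $\theta_k \to 0$, there exists $K_0$ with $\theta_k \in (0,1]$ for all $k \geq K_0$. On this tail the recursion $\mkp = (1-\theta_k)\mk + \theta_k g_k$ is a genuine convex combination, so an immediate induction gives $\norm{\mk} \leq \max\{\norm{m_{K_0}}, M_g\}$ for all $k \geq K_0$. The head iterations $k < K_0$ are handled by finitely many applications of the triangle-inequality bound $\norm{\mkp} \leq |1-\theta_k|\norm{\mk} + \theta_k M_g$ starting from the deterministic bound $\norm{m_0} \leq M_x$. Setting $M_d$ equal to the sum of these two uniform bounds completes the proof.

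As the authors themselves remark, no step is delicate; the only conceptual point worth flagging is the passage from local boundedness of $\D_f$ to a uniform bound on $d_k \in \D_f^{\delta_k}(\xk)$, which requires nothing beyond unpacking Definition \ref{Defin_delta_expansion} on a slightly enlarged ball. The main ``obstacle'' is purely bookkeeping, namely separating the almost-sure control of $\{\xk\}$ and $\{\xi_{k+1}\}$ from the pathwise induction on $\{\mk\}$.
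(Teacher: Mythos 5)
Your proof is correct and follows exactly the route the paper indicates when it omits the argument: uniform boundedness of $\{\xk\}$ and $\{\xi_k\}$ plus local boundedness of $\D_f$ (via compactness of the ball of radius $M_x+1$ and the definition of $\D_f^{\delta_k}$) to bound $\{g_k\}$, then the convex-combination recursion for $\{\mk\}$. Your explicit handling of the finitely many indices where $\theta_k$ might exceed $1$ is a careful touch consistent with Assumption \ref{Assumption_framework}(3), which only requires positivity and $\theta_k\log(k)\to 0$.
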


The Lemma \ref{Le_xk_yk_close}  illustrates that $\norm{\sigma \xk + \mk} \to 0$ as $k$ tends to infinity. 
\begin{lem}
	\label{Le_xk_yk_close}
	Suppose Assumption \ref{Assumption_f} and Assumption \ref{Assumption_framework} hold. Then there exists a nonnegative sequence $\{\hat{\delta}_k\}$ such that  $\lim_{k\to \infty} \hat{\delta}_k = 0$ and almost surely, $\norm{\sigma \xk + \mk} \leq \hat{\delta}_k$ holds for any $k\geq 0$. 
\end{lem}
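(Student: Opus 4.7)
The plan is to track the quantity $r_k := \sigma \xk + \mk$ and establish that it obeys a uniformly contractive linear recursion with a vanishing forcing term. Substituting the updates $\mkp = \mk + \theta_k (g_k - \mk)$ and $\xkp = \xk - \eta_k H(\vkp) \odot (\mkp + \sigma \xk)$ into $r_{k+1} = \sigma \xkp + \mkp$, and observing that $\mkp + \sigma \xk = r_k + \theta_k (g_k - \mk)$, a direct componentwise simplification yields
\begin{equation*}
r_{k+1} \;=\; \bigl(\mathbf{1} - \sigma \eta_k H(\vkp)\bigr) \odot \bigl(r_k + \theta_k (g_k - \mk)\bigr).
\end{equation*}

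The key step is then to extract a deterministic contraction factor from the multiplier. By Assumption \ref{Assumption_framework}(1), every coordinate of $H(\vkp)$ lies in $[\varepsilon_v, M_v]$, and by Assumption \ref{Assumption_framework}(3) the stepsizes satisfy $0 < \underline{\eta} := \inf_k \eta_k$ and $\overline{\eta} := \sup_k \eta_k < 2/(\sigma M_v)$. Hence each coordinate of $\sigma \eta_k H(\vkp)$ lies in $[\sigma \underline{\eta}\varepsilon_v,\, \sigma \overline{\eta} M_v] \subset (0,2)$, and the deterministic constant $\rho := \max\bigl(|1 - \sigma \underline{\eta}\varepsilon_v|,\; |1 - \sigma \overline{\eta} M_v|\bigr)$ satisfies $\rho < 1$ and bounds every coordinate of $\mathbf{1} - \sigma \eta_k H(\vkp)$ in absolute value. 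Combined with Lemma \ref{Le_UB_mk_gk}, which provides a deterministic $M_d$ with $\norm{g_k - \mk} \leq 2 M_d$ almost surely, this produces the scalar bound $\norm{r_{k+1}} \leq \rho \norm{r_k} + C \theta_k$ almost surely, with $C := 2\rho M_d$.

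To conclude, I would build $\hat{\delta}_k$ deterministically by setting $\hat{\delta}_0 := (1+\sigma) M_x$ (which dominates $\norm{r_0}$ almost surely by Assumption \ref{Assumption_framework}(2)) and $\hat{\delta}_{k+1} := \rho \hat{\delta}_k + C \theta_k$; a one-line induction on the inequality above then gives $\norm{\sigma \xk + \mk} \leq \hat{\delta}_k$ almost surely for every $k$. For $\hat{\delta}_k \to 0$, I would use the standard argument for linear recursions with contractive multiplier and vanishing forcing: given $\varepsilon > 0$, choose $K$ large enough that $\theta_j \leq \varepsilon(1-\rho)/(2C)$ for all $j \geq K$ (which is possible since $\theta_k \to 0$, a direct consequence of $\theta_k \log k \to 0$ in Assumption \ref{Assumption_framework}(3)); unrolling $\hat{\delta}_{k+1} = \rho^{k-K+1}\hat{\delta}_K + C \sum_{j=K}^{k} \rho^{k-j}\theta_j$ bounds the tail sum by $\varepsilon/2$ via geometric summation, while the leading term is eventually below $\varepsilon/2$.

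The only real obstacle is ensuring that the contraction factor $\rho$ is genuinely deterministic and uniformly strictly less than $1$. This is exactly the function of the two-sided bound on $H(\vkp)$ in Assumption \ref{Assumption_framework}(1) together with the stepsize ceiling $\overline{\eta} < 2/(\sigma M_v)$ and the floor $\underline{\eta} > 0$ in Assumption \ref{Assumption_framework}(3); once these are in place, the argument reduces to standard manipulations of a contractive scalar recursion. No information about $\{\xi_k\}$ or about $\D_f$ is needed beyond the uniform bounds already encoded in Lemma \ref{Le_UB_mk_gk}, which is why the conclusion holds pointwise in $k$ (not merely asymptotically) with a deterministic envelope.
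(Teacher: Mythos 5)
Your proposal is correct and follows essentially the same route as the paper's proof: the same exact recursion $\sigma x_{k+1}+m_{k+1}=(1-\sigma\eta_k H(v_{k+1}))\odot\bigl(\sigma x_k+m_k+\theta_k(g_k-m_k)\bigr)$, the same uniform contraction factor extracted from $\varepsilon_v\le H(v_{k+1})\le M_v$ together with $0<\inf_k\eta_k$ and $\sup_k\eta_k<2/(\sigma M_v)$, and the same deterministic envelope obtained by unrolling the contractive scalar recursion with vanishing forcing $C\theta_k$. The only differences are cosmetic (you define $\hat{\delta}_k$ recursively rather than by the explicit unrolled formula, and seed it with $(1+\sigma)M_x$ instead of $\sigma M_x+M_d$), so no further changes are needed.
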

\begin{proof}
	Let $\eta_{\min}:=\inf_{k\geq 0} \eta_k$ and $\eta_{\max} := \sup_{k\geq 0} \eta_k$. Then from Assumption \ref{Assumption_framework}(3), there exists a constant $\tilde{\eta} > 0$ such that $ \max\{|1-\eta_k \sigma M_v|, |1-\eta_k \sigma \varepsilon_v|\} \leq 1-\tilde{\eta}$ holds for any $k\geq 0$. 
	Then from the update scheme of $\{\xk\}$ in framework \eqref{Eq_Framework}, it holds that 
	\begin{equation}
		\begin{aligned}
			&\norm{\sigma \xkp + \mkp} \\
			={}& \norm{(1-\eta_k \sigma H(\vkp) )\odot(\sigma \xk + \mk)\, +\,
				\theta_k (1-\eta_k \sigma H(\vkp)) \odot (g_k -\mk ) }\\
			\leq{}& \max\{|1-\eta_k \sigma M_v|, |1-\eta_k \sigma \varepsilon_v|\}   (\norm{\sigma \xk + \mk} + \theta_k \norm{g_k - m_k})\\
			\leq{}& (1-\tilde{\eta}) \norm{ \sigma \xk + \mk} + 2M_d\theta_k  \leq  (1-\tilde{\eta})^{k+1}
			\norm{\sigma x_0 + m_0}+ 2M_d \sum_{i = 0}^{k} (1-\tilde{\eta})^{k-i}\theta_i \\
			\leq{}& (1-\tilde{\eta})^{k+1}(\sigma M_x + M_d)+ 
			2M_d\sum_{i = 0}^{k} (1-\tilde{\eta})^{k-i}\theta_i 
			\; =: \; \hat{\delta}_k.
		\end{aligned}
		\label{xk_yk_est}
	\end{equation}
	Since $\lim_{k\to \infty} \theta_k = 0$, it holds that $\lim\limits_{k\to \infty} \sum_{i = 0}^{k} (1-\tilde{\eta})^{k-i}\theta_i = 0$. 
	Thus we get $\lim_{k\to \infty} \hat{\delta}_k = 0$, and $\norm{\sigma \xk + \mk} \leq \hat{\delta}_k$ holds for any $k\geq 0$. This completes the proof. 
\end{proof}

Based on the Lemma \ref{Le_xk_yk_close}, let the auxiliary sequence $\{\yk\}$ be defined as 
\begin{equation}
	\label{Eq_Defin_yk}
	\yk := -\frac{1}{\sigma} \mk, \quad \text{ for any $k\geq 0$.}
\end{equation}
Then we can conclude that $\lim_{k\to \infty} \norm{\xk - \yk} = 0$ almost surely. More importantly, substituting \eqref{Eq_Defin_yk} into the update scheme for $\{\mk\}$ in \eqref{Eq_Framework}, we arrive at the following inclusion
\begin{equation}
	\label{Eq_update_yk}
	\begin{aligned}
		\ykp ={}& \yk - \frac{\theta_k}{\sigma} \left( d_k + \sigma \yk + \xi_{k+1}  \right). 
	\end{aligned}
\end{equation}

In the following lemma, we prove that $d_k + \sigma \yk$ can be regarded as an approximated evaluation for $\D_g(\yk)$. 

\begin{lem}
	\label{Le_approx_dk}
	Suppose Assumption \ref{Assumption_f} and Assumption \ref{Assumption_framework} hold. Then let ${\delta}_k^\star :=  (1+ \sigma)(\delta_k + \hat{\delta}_k)$, it holds that 
	\begin{equation}
		d_k + \sigma y_k \in \D_g^{{\delta}_k^\star}(\yk). 
		\label{Eq_tmp}
	\end{equation}
\end{lem}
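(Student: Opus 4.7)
The plan is to verify $d_k + \sigma \yk \in \D_g^{\delta_k^\star}(\yk)$ directly from Definition \ref{Defin_delta_expansion}, i.e., to exhibit a witness point $z \in \bb{B}_{\delta_k^\star}(\yk)$ together with a vector $v \in \D_g(z)$ such that $\norm{(d_k + \sigma \yk) - v} \leq \delta_k^\star$. The main structural observation is that $\D_g = \D_f + \sigma\,\mathrm{Id}$ by construction, so any approximate evaluation of $\D_f$ at a point near $\xk$ translates, up to a linear shift, into an approximate evaluation of $\D_g$ at the same point.

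First I would unpack Assumption \ref{Assumption_framework}(4): $d_k \in \D_f^{\delta_k}(\xk)$ yields some $z_k \in \bb{B}_{\delta_k}(\xk)$ and $w_k \in \D_f(z_k)$ with $\norm{d_k - w_k} \leq \delta_k$. Setting $v := w_k + \sigma z_k$ gives a vector in $\D_g(z_k)$ by the definition of $\D_g$. This is the natural candidate because it leaves only the ``shift'' part of $\D_g$ to be handled separately.

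Next I would use Lemma \ref{Le_xk_yk_close} with $\yk = -\mk/\sigma$ to deduce $\norm{\xk - \yk} = \norm{\sigma \xk + \mk}/\sigma \leq \hat{\delta}_k/\sigma$. Two triangle-inequality estimates then complete the proof: a ball-radius bound $\norm{z_k - \yk} \leq \norm{z_k - \xk} + \norm{\xk - \yk} \leq \delta_k + \hat{\delta}_k/\sigma$, and a residual bound $\norm{(d_k + \sigma \yk) - v} \leq \norm{d_k - w_k} + \sigma\norm{\yk - z_k} \leq (1+\sigma)\delta_k + \hat{\delta}_k$. Both of these are dominated by $(1+\sigma)(\delta_k + \hat{\delta}_k) = \delta_k^\star$, which is precisely what membership in $\D_g^{\delta_k^\star}(\yk)$ requires.

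The argument is a direct unpacking of the two definitions involved, so I do not anticipate a conceptual obstacle. The only care that needs to be exercised is in bundling both the ball-radius bound and the residual bound into the same constant $\delta_k^\star$, and in remembering that the same point $z_k$ furnished by the $\delta_k$-expansion of $\D_f$ around $\xk$ must serve simultaneously as the witness point for the $\delta_k^\star$-expansion of $\D_g$ around $\yk$; the factor $(1+\sigma)$ in the definition of $\delta_k^\star$ is exactly what absorbs the extra $\sigma\norm{\yk - z_k}$ arising from the linear shift in $\D_g$.
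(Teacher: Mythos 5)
Your argument is essentially the paper's: you take the witness point $z_k\in\bb{B}_{\delta_k}(\xk)$ supplied by Assumption \ref{Assumption_framework}(4), shift by $\sigma z_k$ to land in $\D_g(z_k)$, and control both the ball radius and the residual by triangle inequalities via Lemma \ref{Le_xk_yk_close}. One quantitative claim does not hold as stated, however: from $\norm{\sigma\xk+\mk}\leq\hat{\delta}_k$ you correctly deduce $\norm{\xk-\yk}\leq\hat{\delta}_k/\sigma$, so your ball-radius bound is $\norm{z_k-\yk}\leq\delta_k+\hat{\delta}_k/\sigma$, and this is \emph{not} dominated by $\delta_k^\star=(1+\sigma)(\delta_k+\hat{\delta}_k)$ when $\sigma<(\sqrt{5}-1)/2$ (take $\delta_k=0$ to see this) --- which is exactly the regime used in the experiments. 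The paper's own proof silently writes $\norm{\yk-\xk}\leq\hat{\delta}_k$ without the $1/\sigma$ factor, so it glosses over the same point; the discrepancy is immaterial because the only property of $\delta_k^\star$ used downstream is that it tends to zero, and both estimates go through verbatim if one simply enlarges the constant to, say, $\delta_k^\star=(1+\sigma)(1+\sigma^{-1})(\delta_k+\hat{\delta}_k)$.
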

\begin{proof}
	As illustrated in Assumption \ref{Assumption_framework}(4), there exists $\tilde{x}_k \in \bb{B}_{\delta_k}(\xk)$ and $\tilde{d}_k \in \D_f(\tilde{x}_k)$ such that $\norm{d_k -\tilde{d}_k} \leq \delta_k$. Combining with Lemma \ref{Le_xk_yk_close}, it holds that $\norm{\yk - \tilde{x}_k} \leq \norm{\yk - \xk} + \norm{\xk - \tilde{x}_k} \leq  \hat{\delta}_k + \delta_k $. As a result, 
	\begin{eqnarray*}
		\mathrm{dist}\left(d_k + \sigma y_k, \D_g(\tilde{x}_k)  \right) 
		&\leq& \norm{d_k+\sigma y_k - (\tilde{d}_k + \sigma \tilde{x}_k)}
		\\
		&\leq& \norm{d_k - \tilde{d}_k} + \sigma \norm{y_k -\tilde{x}_k}
		\leq \delta_k+\sigma(\delta_k +\hat{\delta}_k). 
	\end{eqnarray*}
	Since $\tilde{x}_k \in \bb{B}_{\delta_k^\star}(y_k)$ and $\mathrm{dist}(d_k + \sigma y_k, \D_g(\tilde{x}_k)) \leq \delta_k^\star$, we get \eqref{Eq_tmp}.
\end{proof}

We can conclude from Lemma \ref{Le_approx_dk} that the auxiliary sequence $\{\yk\}$ follows the differential inclusion,
\begin{equation}
	\ykp \in \yk - \frac{\theta_k}{\sigma} \left( \D_g^{\delta^{\star}_k}(\yk) + \xi_{k+1}  \right). 
\end{equation}
This fact illustrates that the sequence $\{\yk\}$ can be viewed as a sequence generated by the SGD method for minimizing $g$. Therefore, in the following proposition, we prove that the interpolated process of the sequence $\{\yk\}$ is a perturbed solution of the following differential inclusion:
\begin{equation}
	\label{Eq_DI_SGD_yk}
	\frac{\mathrm{d}y}{\mathrm{d}t} \in - \D_g(y). 
\end{equation}

\begin{prop}
	\label{Prop_perturbed_solution_yk}
	Suppose Assumption \ref{Assumption_f} and Assumption \ref{Assumption_framework} hold. 
	Then the interpolated process of the sequence $\{\yk\}$ is a perturbed solution for the differential inclusion \eqref{Eq_DI_SGD_yk}. 
\end{prop}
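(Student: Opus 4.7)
The plan is to cast the recursion \eqref{Eq_update_yk} for $\{\yk\}$ as an instance of the template \eqref{Eq_def_Iter} and then invoke Lemma \ref{Le_interpolated_process} with set-valued map $-\D_g$, stepsizes $\tilde{\eta}_k := \theta_k/\sigma$, drift terms $\tilde{d}_k := -(d_k + \sigma \yk)$, and noises $\tilde{\xi}_k := -\xi_{k+1}$. Since the continuous-time interpolation is defined through the actual step $\ykp - \yk$, this identification translates the conclusion of Lemma \ref{Le_interpolated_process} directly into the statement that the interpolated process of $\{\yk\}$ is a perturbed solution of \eqref{Eq_DI_SGD_yk}.

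First I would check that $-\D_g$ meets the structural requirements of Lemma \ref{Le_interpolated_process}. By Lemma \ref{Le_Dg}, $\D_g$ is a convex-valued, graph-closed conservative field, hence nonempty compact-valued (Definition \ref{Defin_conservative_field}) and locally bounded (the remark following it), and these properties pass to $-\D_g$. The uniform boundedness hypothesis of Lemma \ref{Le_interpolated_process} then comes for free: $\|\yk\| = \|\mk\|/\sigma$ is uniformly bounded by Lemma \ref{Le_UB_mk_gk}, and $\|\tilde{d}_k\| \leq \|g_k\| + \|\xi_{k+1}\| + \sigma\|\yk\|$ is uniformly bounded by Lemma \ref{Le_UB_mk_gk} together with Assumption \ref{Assumption_framework}(5). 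The inexact-evaluation hypothesis $\tilde{d}_k \in (-\D_g)^{\delta_k^\star}(\yk)$ with $\delta_k^\star \to 0$ is precisely Lemma \ref{Le_approx_dk}, once one notes that $(-\D_g)^{\delta} = -\D_g^{\delta}$, a direct unfolding of Definition \ref{Defin_delta_expansion}.

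The only substantive verification concerns the noise-averaging hypothesis, namely
\begin{equation*}
\lim_{s\to\infty}\sup_{s\leq i \leq \tilde{\Lambda}(\tilde{\lambda}_s + T)}\left\|\sum_{k=s}^i \tilde{\eta}_k \tilde{\xi}_k\right\| = 0 \quad \text{almost surely, for every } T>0,
\end{equation*}
where $\tilde{\lambda}_i$ and $\tilde{\Lambda}$ are built from $\{\tilde{\eta}_k\}$. This is a direct application of Proposition \ref{Prop_UB_martingale_difference_sequence}: Assumption \ref{Assumption_framework}(5) says $\{\xi_k\}$ is a uniformly bounded martingale difference sequence, and this property is preserved under the index shift $\tilde{\xi}_k = -\xi_{k+1}$ together with the corresponding shift of the filtration; meanwhile, the rate condition $\lim_{k\to\infty}\theta_k\log(k) = 0$ from Assumption \ref{Assumption_framework}(3) passes without change to $\tilde{\eta}_k = \theta_k/\sigma$. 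I expect the main obstacle to be essentially bookkeeping: keeping careful track of the index shift from $\xi_k$ to $\xi_{k+1}$ and of the constant factor $1/\sigma$ in the stepsize, and confirming that the window $[\tilde{\lambda}_s, \tilde{\lambda}_s + T]$ implicit in Lemma \ref{Le_interpolated_process} lines up, after this rescaling, with the one invoked in Proposition \ref{Prop_UB_martingale_difference_sequence}. Neither modification breaks any hypothesis, so the verification should go through cleanly.
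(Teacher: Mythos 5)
Your proposal is correct and follows essentially the same route as the paper: both verify the three conditions of Lemma \ref{Le_interpolated_process} for $\{\yk\}$ with stepsizes $\theta_k/\sigma$, using Proposition \ref{Prop_UB_martingale_difference_sequence} with Assumption \ref{Assumption_framework}(5) for the noise condition, Lemma \ref{Le_approx_dk} for the inexact evaluation of $\D_g$, and Lemma \ref{Le_UB_mk_gk} with Assumption \ref{Assumption_framework}(2) for uniform boundedness. Your additional bookkeeping about the structural properties of $-\D_g$ and the index shift of the noise is a harmless elaboration of what the paper leaves implicit.
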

\begin{proof}
	Based on Lemma \ref{Le_interpolated_process}, by verifying its conditions, we can prove that the interpolated process of $\{\yk\}$ is a perturbed solution for the differential inclusion \eqref{Eq_DI_SGD_yk}. 
	
	Condition (1) of Lemma \ref{Le_interpolated_process} directly follows from Assumption \ref{Assumption_framework}(5) and Proposition \ref{Prop_UB_martingale_difference_sequence}, by choosing the stepsizes in \eqref{Eq_def_Iter} as $\{\frac{\theta_k}{\sigma}\}$. Moreover, Lemma \ref{Le_approx_dk} guarantees the validity of the condition (2) in Lemma \ref{Le_interpolated_process}. Furthermore, condition (3) of Lemma \ref{Le_interpolated_process} follows from Assumption \ref{Assumption_framework}(2) and Lemma \ref{Le_UB_mk_gk}. As a result, directly from Lemma \ref{Le_interpolated_process}, we can conclude that almost surely, the interpolated process of $\{\yk\}$ is a perturbed trajectory of the differential inclusion \eqref{Eq_DI_SGD_yk}. 
\end{proof}

In the following theorem, we prove the convergence properties of the framework \eqref{Eq_Framework}.
\begin{theo}
	\label{The_convergence_Nondiminishing}
	Suppose Assumption \ref{Assumption_f} and Assumption \ref{Assumption_framework} hold. 
	Then almost surely, any cluster point of the sequence $\{\xk\}$ is a $\D_g$-stationary point of $g$, and $\{g(\xk)\}$ converges.  
\end{theo}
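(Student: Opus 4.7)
The plan is to reduce the theorem to Theorem~\ref{The_convergence_beniam} applied to the auxiliary sequence $\{\yk\} = \{-\mk/\sigma\}$, and then to transfer the conclusions back to $\{\xk\}$ using Lemma~\ref{Le_xk_yk_close}. The heavy lifting has essentially been done by Proposition~\ref{Prop_perturbed_solution_yk}, which tells us that the interpolated process of $\{\yk\}$ is almost surely a perturbed solution of the differential inclusion \eqref{Eq_DI_SGD_yk}; what remains is to package this with a Lyapunov argument for $g$.

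First I would identify the ingredients needed in Theorem~\ref{The_convergence_beniam}. The set-valued mapping $\D_g$ is convex-valued, graph-closed, and locally bounded by Lemma~\ref{Le_Dg} and the comments after Definition~\ref{Defin_conservative_field}. I would take $\phi := g$ and $\ca{B} := \{x \in \Rn : 0 \in \D_g(x)\}$. By Assumption~\ref{Assumption_f}(3), $g$ is bounded below; by Assumption~\ref{Assumption_f}(2), the set $\{g(x) : x\in \ca{B}\} = \{g(x) : 0 \in \D_f(x) + \sigma x\}$ has empty interior in $\bb{R}$ (note $\D_g(x) = \D_f(x) + \sigma x$ is convex-valued, so $0 \in \D_g(x)$ iff $0 \in \conv(\D_g(x))$). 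Since $g$ is path-differentiable with conservative field $\D_g$ by Lemma~\ref{Le_Dg}, Proposition~\ref{Prop_Lyapunov_SGD} applied to $g$ (not $f$) shows that $g$ is a Lyapunov function for \eqref{Eq_DI_SGD_yk} with stable set $\ca{B}$. Combined with Proposition~\ref{Prop_perturbed_solution_yk} for the perturbed-solution hypothesis, Theorem~\ref{The_convergence_beniam} then yields that almost surely every cluster point of $\{\yk\}$ lies in $\ca{B}$ and $\{g(\yk)\}$ converges.

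Next I would transfer this to $\{\xk\}$. Lemma~\ref{Le_xk_yk_close} gives $\lim_{k\to\infty} \norm{\sigma \xk + \mk} = 0$ almost surely, hence $\lim_{k\to\infty} \norm{\xk - \yk} = 0$ by the definition $\yk = -\mk/\sigma$. Consequently $\{\xk\}$ and $\{\yk\}$ have identical sets of cluster points almost surely, so every cluster point of $\{\xk\}$ lies in $\ca{B}$, which is exactly the $\D_g$-stationarity conclusion. For convergence of $\{g(\xk)\}$, continuity of $g$ together with the uniform boundedness of $\{\xk\}$ (Assumption~\ref{Assumption_framework}(2)) implies $g$ is uniformly continuous on the relevant bounded region, so $|g(\xk) - g(\yk)| \to 0$ and $\{g(\xk)\}$ inherits the convergence of $\{g(\yk)\}$.

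The main obstacle I anticipate is the bookkeeping around the non-diminishing nature of $\{\eta_k\}$: the standard stochastic-approximation machinery of Theorem~\ref{The_convergence_beniam} is stated for diminishing stepsizes, and here it is applied to $\{\yk\}$, whose effective stepsize $\theta_k/\sigma$ \emph{is} diminishing, even though the stepsize $\eta_k$ driving $\{\xk\}$ is not. I would emphasize this reduction explicitly: rather than analyzing $\{\xk\}$ directly (which would not fit the classical framework under non-diminishing $\eta_k$), we analyze the closely shadowing sequence $\{\yk\}$, whose dynamics follow \eqref{Eq_update_yk} with diminishing steps $\theta_k/\sigma$, and then use the asymptotic identification $\norm{\xk - \yk} \to 0$ to inherit stationarity and function-value convergence. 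No further technical steps beyond this packaging are required.
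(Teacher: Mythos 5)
Your proposal is correct and follows essentially the same route as the paper: apply Theorem~\ref{The_convergence_beniam} to the shadowing sequence $\{\yk\}$ (using Proposition~\ref{Prop_Lyapunov_SGD} for the Lyapunov property of $g$ and Proposition~\ref{Prop_perturbed_solution_yk} for the perturbed-solution hypothesis), then transfer the conclusions to $\{\xk\}$ via Lemma~\ref{Le_xk_yk_close} and continuity of $g$. Your version is in fact slightly more explicit than the paper's in verifying the boundedness-below and empty-interior hypotheses of Theorem~\ref{The_convergence_beniam} and in noting that convexity of $\D_g$ identifies the stable set with the set of $\D_g$-stationary points.
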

\begin{proof}
	From Lemma \ref{Le_Dg} and Proposition \ref{Prop_Lyapunov_SGD}, we can conclude that $g$ is a Lyapunov function for the differential inclusion 
	\eqref{Eq_DI_SGD_yk}
	with the stable set $\{x \in \Rn: 0\in \D_g(x)\}$. Moreover, Proposition \eqref{Prop_perturbed_solution_yk} illustrates that almost surely, the interpolated process of the sequence $\{\yk\}$ is a perturbed solution of the differential inclusion \eqref{Eq_DI_SGD_yk}. As a result, it follows from Theorem \ref{The_convergence_beniam} that any cluster point of $\{\yk\}$ lies in the set $\{x \in \Rn: 0\in \D_g(x)\}$ and the sequence $\{g(\yk)\}$ converges. 
	
	Finally, Lemma \ref{Le_xk_yk_close} illustrates that $\lim_{k\to \infty}\norm{\xk - \yk} = 0$ holds almost surely. Then from the continuity of $g$ and the convergence properties of $\{\yk\}$,  we can conclude that any cluster point of $\{\xk\}$ lies in the set $\{x \in \Rn: 0\in \D_g(x)\}$ and the sequence $\{g(\xk)\}$ converges. This completes the proof. 
\end{proof}

\subsection{Comments on the uniform boundedness of $\{\xk\}$ and $\{\vk\}$}

In this subsection, we present some sufficient and easy-to-verify conditions that guarantee the validity of Assumption \ref{Assumption_framework}(2) and Assumption \ref{Assumption_framework}(3). The following proposition illustrates that with some mild global continuity condition for $f$ and the uniform boundedness of $\{H(\vk)\}$, the sequence $\{\xk\}$ is uniformly bounded. and thus satisfies Assumption \ref{Assumption_framework}(2).

\begin{prop}
	\label{Prop_UB}
	Suppose Assumption \ref{Assumption_f} and Assumption \ref{Assumption_framework}(1)(3)(4)(5)  hold. Moreover, suppose there exists constants $L>0$ and $\nu \in [0,1)$ 
	such that $\norm{ \D_f(x)} \leq L(1+\norm{x}^{\nu})$ holds. Then for any initial point $(x_0, m_0, v_0)$, there exists a constant $Q>0$ such that $\sup_{k\geq 0} \norm{\xk} \leq Q$. 
\end{prop}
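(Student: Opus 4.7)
The plan is to establish uniform boundedness through a joint induction on the coupled scalar sequences $a_k := \norm{\xk}$ and $b_k := \norm{\mk}$, where the hypothesis $\nu < 1$ is precisely what allows the induction to close. First I would rewrite the update of $\xk$ as
$$\xkp \;=\; \bigl(1 - \eta_k \sigma H(\vkp)\bigr) \odot \xk \;-\; \eta_k\, H(\vkp) \odot \mkp,$$
and observe that by Assumption \ref{Assumption_framework}(1) and (3), the coordinate-wise bound $\varepsilon_v \leq [H(\vkp)]_i \leq M_v$ combined with $\inf_k \eta_k > 0$ and $\sup_k \eta_k < 2/(\sigma M_v)$ produces a constant $\tilde{\eta} \in (0,1)$, uniform in $k$ and $i$, such that $|1 - \eta_k \sigma [H(\vkp)]_i| \leq 1-\tilde{\eta}$. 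Applying this coordinate-wise to the first term and bounding the second by $\eta_k M_v \norm{\mkp}$ yields the contraction
$$a_{k+1} \;\leq\; (1-\tilde{\eta})\, a_k \;+\; \alpha\, b_{k+1}, \qquad \alpha \,:=\, \bigl(\sup_k \eta_k\bigr) M_v.$$

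Next, starting from $d_k \in \D_f^{\delta_k}(\xk)$ with $\delta_k \to 0$, the sublinear growth bound $\norm{\D_f(z)} \leq L(1+\norm{z}^\nu)$, the subadditivity $(r+s)^\nu \leq r^\nu + s^\nu$ valid for $r,s \geq 0$ and $\nu \in [0,1]$, and the uniform martingale bound $\sup_k \norm{\xi_k} \leq M_\xi$ from Assumption \ref{Assumption_framework}(5), I would extract a single constant $C>0$ such that $\norm{g_k} \leq C(1 + a_k^\nu)$ for every $k$, almost surely. The momentum recursion $\mkp = (1-\theta_k)\mk + \theta_k g_k$ then gives the second coupled bound
$$b_{k+1} \;\leq\; (1-\theta_k)\, b_k \;+\; \theta_k\, C\bigl(1 + a_k^\nu\bigr).$$

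The crux is to choose constants $A,B>0$ such that
$$C\bigl(1 + A^\nu\bigr) \;\leq\; B \;\leq\; \tilde{\eta} A / \alpha, \qquad A \geq \norm{x_0}, \qquad B \geq \norm{m_0}.$$
Compatibility of the two inner inequalities reduces to $\alpha C(1+A^\nu)/\tilde{\eta} \leq A$, which holds for all sufficiently large $A$ precisely because $A^\nu = o(A)$ when $\nu < 1$; one enlarges $A$ further if needed so that the initial-condition constraints $A \geq \norm{x_0}$ and $\tilde{\eta} A/\alpha \geq \norm{m_0}$ are also absorbed, and sets $B := \tilde{\eta} A/\alpha$. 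Under the inductive hypothesis $a_k \leq A$ and $b_k \leq B$, the momentum bound gives $b_{k+1} \leq (1-\theta_k) B + \theta_k B = B$, and the state bound then gives $a_{k+1} \leq (1-\tilde{\eta})A + \alpha B \leq A$. Setting $Q := A$ completes the induction and the proof.

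The main obstacle is guaranteeing that the two contractive recursions can be closed simultaneously with the same constants $A$ and $B$; this is exactly where the hypothesis $\nu < 1$ is indispensable, since for $\nu = 1$ the source term $C(1 + A^\nu)$ in the $b_k$-recursion would be linear in $A$ and could exceed $\tilde{\eta} A/\alpha$ regardless of how $A$ is chosen. The remaining technicalities---producing an explicit $\tilde{\eta}$ from the two-sided stepsize condition in Assumption \ref{Assumption_framework}(3), and absorbing $L$, $M_\xi$, and $\sup_k \delta_k$ into the single constant $C$---are routine.
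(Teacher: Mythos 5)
Your proof is correct and takes essentially the same approach as the paper's: the same coordinate-wise contraction estimate for $\norm{\xkp}$, the same sublinear bound $\norm{g_k}\leq C(1+\norm{\xk}^{\nu})$, and the same balance condition between the two thresholds, solvable precisely because $\nu<1$. The paper merely packages the induction as a contradiction at the first index where $\norm{\xk}$ would exceed $Q$, and bounds $\norm{\mkp}$ by $\max\{\norm{m_0},\sup_i\norm{g_i}\}$ rather than carrying $\norm{\mk}$ as a second inductive quantity.
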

\begin{proof}
	As illustrated in Assumption \ref{Assumption_framework}, $d_k \in \D_f^{\delta_k}(\xk)$ and $\{\xi_k\}$ is uniformly bounded. Then it is easy to verify that there exists a constant $\hat{L}$ such that $\norm{g_k} = \norm{d_k + \xi_{k+1}} \leq \hat{L}(1+ \norm{\xk}^{\nu})$ holds for any $k\geq 0$. 
	
	Let the constant $Q$ be defined as 
	\begin{equation}
		Q \geq \max\left\{ \left(\frac{2 M_v  \hat{L} }{\varepsilon_v \sigma }\right)^{\frac{1}{1-\nu}},  \frac{M_v \norm{m_0}}{ \varepsilon_v \sigma}, \norm{x_0}+1 \right\}. 
	\end{equation}
	
	In the following, for any sequence $\{\xk\}$ generated from \eqref{Eq_Framework}, we aim to prove that the set $\{k\geq 0: \norm{\xk} \geq Q  \}$ is an empty set by contradiction. Therefore, we assume that the set $\{k\geq 0: \norm{\xk} \geq Q \}$ is non-empty and set $\tau = \inf \{k\geq 0: \norm{\xk} \geq Q \}-1$. Then from the definition of $\tau$, we have $\norm{x_{\tau + 1}} > \norm{x_{\tau}}$. 
	
	On the other hand, from the update scheme \eqref{Eq_Framework}, for any $k\leq \tau$, we have
	\begin{equation*}
		\norm{\mkp} \leq \max\{m_0, \sup_{0\leq i\leq k+1} \norm{g_k}\} \leq \max\{ \norm{m_0},  \hat{L}(1+Q^{\nu}) \} \leq \frac{\sigma \varepsilon_v}{M_v} Q ,
	\end{equation*}
	where the last inequality follows from the definition of $Q$ and the fact that
	\[
	\hat L(1+Q^\nu)\leq 2\hat LQ^\nu=\frac{\sigma\varepsilon_v}{M_v}\cdot\frac{2M_v\hat L}{\sigma\varepsilon_v}Q^\nu\leq\frac{\sigma\varepsilon_v}{M_v}Q^{1-\nu}Q^\nu=\frac{\sigma\varepsilon_v}{M_v}Q.
	\]
	Then it holds that 
	\begin{equation*}
		\begin{aligned}
			&\norm{x_{\tau + 1}} = \norm{(1-\eta_k \sigma H_{\tau}(v_{\tau+1})) \odot x_{\tau} - \eta_k H_{\tau}(v_{\tau+1}) \odot m_{\tau + 1} }\\
			\leq{}& (1-\eta_{k} \sigma \varepsilon_v)\norm{x_{\tau}} + \eta_{k} M_v \norm{m_{\tau +1}} \leq  (1-\eta_{k}\sigma \varepsilon_v) Q + \eta_{k} M_v \cdot \frac{\sigma \varepsilon_v}{M_v} Q \leq Q. 
		\end{aligned}
	\end{equation*}
	But $\norm{x_{\tau +1}} \leq Q$ contradicts to the definition of $\tau$. Therefore, we can conclude that the set $\{k\geq 0: \norm{\xk} \geq Q \}$ is empty.  Therefore, we derive that $\sup_{k\geq 0} \norm{\xk} \leq Q$ 
	holds almost surely. This completes the proof.    
\end{proof}

\begin{rmk}
	It is worth mentioning that the proof of Proposition \ref{Prop_UB} does not require the positiveness of $\eta_{\min}$ in Assumption \ref{Assumption_framework}. Therefore, when 
	the stepsizes $\{\eta_k\}$ in the framework \eqref{Eq_Framework} is diminishing, we can still prove the uniform boundedness of $\{\xk\}$ by the same proof techniques as those in Proposition \ref{Prop_UB}. 
\end{rmk}

Then we discuss the uniform boundedness of the sequence $\{H(\vk)\}$. Apart from Assumption \ref{Assumption_f} and Assumption \ref{Assumption_framework}, we make the assumption on the global Lipschitz continuity of $f$, in the sense that 
\begin{equation}
	\label{Eq_assumption_Lip}
	\sup_{x \in \Rn} \norm{\D_f(x)} \leq M_f, \qquad \text{for some constant $M_{f}>0$. } 
\end{equation}
Such an assumption is standard in various existing works. Table \ref{Table_intro_1} lists some Adam-family methods, where the sequence $\{\vk\}$ remains uniformly bounded under Assumption \ref{Assumption_f}, Assumption \ref{Assumption_framework}(3)-(5), and \eqref{Eq_assumption_Lip}.

\begin{table}[htbp]
	\centering
	\footnotesize
	\caption{Different update schemes for $\{\vk\}$ in the framework \eqref{Eq_Framework} under Assumption \ref{Assumption_f}, Assumption \ref{Assumption_framework}(3)-(5), and \eqref{Eq_assumption_Lip}. Here $\varepsilon, c_{l}, c_{u} > 0$ are hyper-parameters for these Adam-family methods. }
	\label{Table_intro_1}
	\begin{tabular}{c|ccc}
		\hline
		Method & Update scheme for $\{\vk\}$ & Formulation for $H(v)$ &  Choice of $(\varepsilon_v, M_v)$\\ \hline
		SGDW \cite{loshchilov2017decoupled} & $\vkp = (1-\beta_1) \vk +\beta_1g_k^2$ & $1$ & $(1,1)$ \\
		Adam \cite{kingma2014adam} & $\vkp = (1-\beta_1) \vk + \beta_1 g_k^2$ & $(\sqrt{v} + \varepsilon)^{-1}$ & $(\frac{1}{ (M_f + M_{\xi}) +\varepsilon}, \frac{1}{\varepsilon})$ \\
		AMSGrad \cite{reddi2019convergence}& $\vkp = \max\{ \vk,  (1-\beta_1) \vk + \beta_1 g_k^2\}$ & $(\sqrt{v} + \varepsilon)^{-1}$ & $(\frac{1}{ (M_f + M_{\xi}) +\varepsilon}, \frac{1}{\varepsilon})$ \\
		Adamax \cite{kingma2014adam} & $\vkp = \max\{\beta_1 \vk, |g_k| + \varepsilon\}$ & $ (v)^{-1}$ &  $(\frac{1}{ (M_f + M_{\xi})^2 +\varepsilon}, \frac{1}{\varepsilon})$ \\
		RAdam \cite{liu2019variance}& $\vkp = (1-\beta_1) \vk + \beta_1 g_k^2$ & $(\sqrt{v} + \varepsilon)^{-1}$ & $(\frac{1}{ (M_f + M_{\xi}) +\varepsilon}, \frac{1}{\varepsilon})$ \\
		AdaBelief \cite{zhuang2020adabelief}& $\vkp = (1-\beta_1) \vk + \beta_1 (g_k-\mkp)^2$ & $(\sqrt{v} + \varepsilon)^{-1}$ & $(\frac{1}{ 2(M_f + M_{\xi}) +\varepsilon}, \frac{1}{\varepsilon})$ \\
		AdaBound \cite{luo2019adaptive} & $\vkp = (1-\beta_1) \vk + \beta_1 g_k^2$  & $\min\{c_{l}, \max\{c_{u}, v^{-\frac{1}{2}} \} \}$ & $(c_{l}, c_{u})$ \\
		Yogi \cite{zaheer2018adaptive} & $\vkp = \vk - \beta_1 \mathrm{sign}(\vk-g_k^2) \odot g_k^2$ &  $(\sqrt{v} + \varepsilon)^{-1}$ & $(\frac{1}{ (M_f + M_{\xi}) +\varepsilon}, \frac{1}{\varepsilon})$ \\
		\hline
	\end{tabular}
\end{table}

Then based on the discussions above, we have the following corollary illustrating the convergence properties of $\{\xk\}$ under easy-to-verify conditions. 
\begin{coro}
	Suppose Assumption \ref{Assumption_f} and Assumption \ref{Assumption_framework}(3)-(5) hold. Moreover, we assume that the conservative field $\D_f$ satisfies  $\sup_{x \in \Rn} \norm{\D_f(x)} \leq M_f$ for some constant $M_f > 0$.  
	Then almost surely, any cluster point of the sequence $\{\xk\}$ is a $\D_g$-stationary point of $g$, and $\{g(\xk)\}$ converges.  
\end{coro}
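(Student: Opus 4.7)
The plan is to derive this corollary as a direct consequence of Theorem \ref{The_convergence_Nondiminishing}, whose hypotheses are Assumption \ref{Assumption_f} together with the full Assumption \ref{Assumption_framework}. Since the corollary explicitly assumes items (3)--(5) of Assumption \ref{Assumption_framework}, the only work is to verify items (1) and (2) under the additional global boundedness hypothesis $\sup_{x\in\Rn}\norm{\D_f(x)}\leq M_f$.

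First I would verify Assumption \ref{Assumption_framework}(1), the uniform boundedness $\varepsilon_v\leq H(\vk)\leq M_v$. The global bound $\norm{\D_f(x)}\leq M_f$ is exactly the condition \eqref{Eq_assumption_Lip} that appears in the discussion preceding Table \ref{Table_intro_1}. Together with Assumption \ref{Assumption_framework}(5) (uniform boundedness of $\{\xi_k\}$ by $M_\xi$), this makes the stochastic subgradients $\{g_k\}$ uniformly bounded by $M_f+M_\xi$. For each of the update schemes for $\{\vk\}$ in Table \ref{Table_intro_1}, this uniform bound on $\{g_k\}$ propagates to a uniform bound on $\{\vk\}$ by a straightforward induction (using $v_0$ as the base), and then the explicit formulas for $H(v)$ in the third column yield positive constants $\varepsilon_v$ and $M_v$ as listed in the fourth column. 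Hence Assumption \ref{Assumption_framework}(1) holds for any of the Adam-family instantiations covered by Table \ref{Table_intro_1}.

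Next I would verify Assumption \ref{Assumption_framework}(2), the uniform boundedness of $\{\xk\}$. This is a direct application of Proposition \ref{Prop_UB}: the hypothesis $\norm{\D_f(x)}\leq M_f$ fits the growth bound $\norm{\D_f(x)}\leq L(1+\norm{x}^{\nu})$ with the trivial choice $L=M_f$ and $\nu=0\in[0,1)$. Since we have just established Assumption \ref{Assumption_framework}(1), and Assumption \ref{Assumption_framework}(3)--(5) are assumed in the corollary, Proposition \ref{Prop_UB} applies and produces a deterministic constant $Q$ with $\sup_{k\geq0}\norm{\xk}\leq Q$. Combined with the initial $\norm{m_0}$, this yields the required $M_x$ in Assumption \ref{Assumption_framework}(2).

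With Assumption \ref{Assumption_f} and all five items of Assumption \ref{Assumption_framework} now in force, Theorem \ref{The_convergence_Nondiminishing} applies and delivers both conclusions: almost surely every cluster point of $\{\xk\}$ lies in $\{x\in\Rn:0\in\D_g(x)\}$ and $\{g(\xk)\}$ converges. I do not expect any genuine obstacle here, as the corollary is essentially a bookkeeping consolidation of Proposition \ref{Prop_UB} and the boundedness arguments summarized in Table \ref{Table_intro_1} to yield the two boundedness hypotheses that Theorem \ref{The_convergence_Nondiminishing} requires; the only mild subtlety is to note that the corollary implicitly presumes $H$ and the $\{\vk\}$ recursion come from one of the schemes in Table \ref{Table_intro_1}, so that step one of the verification is applicable.
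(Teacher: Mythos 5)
Your proposal is correct and matches the paper's intended argument exactly: the paper offers no explicit proof beyond ``based on the discussions above,'' meaning precisely the combination of the Table \ref{Table_intro_1} boundedness discussion (to get Assumption \ref{Assumption_framework}(1)) and Proposition \ref{Prop_UB} with $\nu=0$, $L=M_f$ (to get Assumption \ref{Assumption_framework}(2)), followed by an appeal to Theorem \ref{The_convergence_Nondiminishing}. Your closing observation that the statement implicitly presumes the $\{\vk\}$ recursion is one of the schemes for which $H(\vk)$ stays uniformly bounded is a fair and accurate reading of the paper's own (equally implicit) convention.
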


\section{Convergence with Single-timescale Stepsizes}

In this section, we investigate the convergence of the framework \eqref{Eq_Framework} when the sequences of stepsizes  $\{\eta_k\}$ and $\{\theta_k\}$ are single-timescale in the sense that they diminish at the same rate. 

The convergence properties presented in Section 3 suggest that the sequence ${\yk}$ asymptotically approximates the trajectories of the differential inclusion \eqref{Eq_DI_SGD_yk}. One may conjecture that this phenomenon is attributable to the involvement of non-diminishing stepsizes $\{\eta_k\}$ in the framework \eqref{Eq_Framework}. 

However, in this section, we aim to show that when single-timescale stepsizes are employed in the framework \eqref{Eq_Framework},  the interpolated process of $\{\yk\}$  is still a perturbed sequence of the differential inclusion \eqref{Eq_DI_SGD_yk}.   These theoretical results suggest that it is the decoupled weight decay that leads to the asymptotic approximation of the differential inclusion \eqref{Eq_DI_SGD_yk} in the framework \eqref{Eq_Framework}, regardless of the timescale of the employed stepsizes $\{\eta_k\}$ and $\{\theta_k\}$.   

The proof techniques in this section are motivated by the techniques in \cite[Section 3]{xiao2023adam}. To prove the convergence of \eqref{Eq_Framework} with single-timescale stepsizes, we need to make the following assumptions. 
\begin{assumpt}
	\label{Assumption_framework_ST}
	\begin{enumerate}
		\item The sequence $\{\xk\}$ is uniformly bounded almost surely. That is, there exists a constant $M_x$ such that  $\max\left\{\norm{m_0}, \sup_{k\geq 0} \norm{\xk} \right\}\leq M_x$ holds almost surely. 
		\item There exists a locally bounded  mapping $W: \Rn\times \Rn \times \Rn_+ \to \Rn_+$ and a prefixed constant $\tau_2 >0$ such that the sequence of estimators $\{\vk\}$ follows the update scheme $\vkp = \vk - \tau_2\eta_k (\vk - W(g_k, \mkp))$. 
		\item The mapping $H:\Rn_+ \to \Rn_+$ is fixed as $H(v) = (\max\{v, 0\}+\varepsilon)^{-\frac{1}{2}}$ for a prefixed constant $\varepsilon > 0$. 
		\item The sequences of stepsizes $\{\eta_k\}$ and $\{\theta_k\}$ are positive and satisfies
		\begin{equation}
			\sum_{k = 0}^{\infty} \theta_k = \infty, \quad \lim_{k\to \infty} \theta_k \log(k) = 0, \quad \lim_{k\to\infty} \frac{\theta_k}{\eta_k} = \tau_1,
		\end{equation}
		for a prefixed positive constant $\tau_1 \in [\frac{\tau_2}{4},\infty)$. 
		\item There exists a non-negative sequence $\{\delta_k\}$ such that $\lim_{k\to \infty} \delta_k = 0$ and $d_k \in \D_f^{\delta_k}(\xk)$. 
		\item   The sequence of noises $\{ \xi_k \}$ is a uniformly bounded martingale difference sequence. 
	\end{enumerate}
\end{assumpt}

Here are some comments for Assumption \ref{Assumption_framework_ST}. Assumption \ref{Assumption_framework_ST}(1)(5)(6) are identical to Assumption \ref{Assumption_framework}(1)(4)(5), respectively. Assumption \ref{Assumption_framework_ST}(2) characterizes how the estimators $\{\vk\}$ are updated. As discussed in \cite{barakat2021convergence,xiao2023adam}, Assumption \ref{Assumption_framework_ST}(2) is general enough to enclose the update schemes for Adam,  AdaBelief, AMSGrad, and Yogi. Moreover, Assumption \ref{Assumption_framework_ST}(3) fixes the formulation of the mapping $H$, and Assumption \ref{Assumption_framework_ST}(4) assumes that the stepsizes in framework \eqref{Eq_Framework} are of single-timescale.


We begin our analysis with the following lemma, which shows the uniform boundedness of $\{\mk\}$ and $\{g_k\}$ directly from the uniform boundedness of $\{\xk\}$ in Assumption \ref{Assumption_framework_ST}(1). As a result, we omit its proof for simplicity. 
\begin{lem}
	\label{Le_UB_mk_gk_ST}
	Suppose Assumption \ref{Assumption_f} and Assumption \ref{Assumption_framework_ST} hold. Then there exists a constant $M_d>0$ such that  $\sup_{k\geq 0} \norm{g_k} + \norm{\mk} \leq M_d$  holds almost surely. 
\end{lem}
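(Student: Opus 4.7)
The plan is to deduce the uniform boundedness of $\{g_k\}$ and $\{\mk\}$ from the already-established uniform boundedness of $\{\xk\}$ in Assumption \ref{Assumption_framework_ST}(1), by combining the local boundedness of the conservative field $\D_f$ with the recursive update scheme for $\mk$.

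First, I would bound $\{d_k\}$. Since $\D_f$ is a conservative field, it is locally bounded (as noted after Definition \ref{Defin_conservative_field}). Combined with Assumption \ref{Assumption_framework_ST}(1), which guarantees $\norm{\xk} \leq M_x$ almost surely, and Assumption \ref{Assumption_framework_ST}(5), which gives $d_k \in \D_f^{\delta_k}(\xk)$ with $\delta_k \to 0$, we can fix $k_0$ such that $\delta_k \leq 1$ for all $k \geq k_0$ and set $R := \sup\{\norm{w} : w \in \D_f(z),\ \norm{z} \leq M_x + 1\}$, which is finite by local boundedness. Then $\norm{d_k} \leq R + 1$ for $k\geq k_0$, and the finitely many remaining $\norm{d_k}$ are trivially bounded by an almost sure constant. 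Combining with Assumption \ref{Assumption_framework_ST}(6), which provides $\sup_k \norm{\xi_k} \leq M_\xi$ almost surely, we get a uniform almost-sure bound $\norm{g_k} = \norm{d_k + \xi_{k+1}} \leq M_g$ for some $M_g > 0$.

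Next, I would bound $\{\mk\}$ by exploiting the convex-combination structure of the momentum update $\mkp = (1-\theta_k)\mk + \theta_k g_k$. Since $\lim_{k \to \infty} \theta_k \log(k) = 0$ from Assumption \ref{Assumption_framework_ST}(4), there exists some $k_1$ such that $\theta_k \in (0, 1]$ for all $k \geq k_1$. For $k \geq k_1$, a straightforward induction gives $\norm{\mkp} \leq \max\{\norm{m_{k_1}}, M_g\}$. For $k < k_1$, only finitely many iterates are involved, and they can be bounded using the triangle inequality $\norm{\mkp} \leq (1+\theta_k)\norm{\mk} + \theta_k M_g$ and $\norm{m_0} \leq M_x$ from Assumption \ref{Assumption_framework_ST}(1). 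Taking $M_d$ to be the maximum of $M_g$ and the resulting bound on $\{\mk\}$ yields the conclusion.

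No step presents any genuine difficulty; the only mild subtlety is the handling of the potentially large initial values of $\theta_k$, which is resolved by splitting the recursion at the index $k_1$ beyond which the update becomes a genuine convex combination. The entire argument is deterministic after conditioning on the almost-sure events in Assumption \ref{Assumption_framework_ST}(1) and (6), so the almost-sure qualifier in the conclusion is inherited directly.
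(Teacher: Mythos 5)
Your argument is correct and is exactly the one the paper has in mind: the paper omits this proof, stating only that it "directly follows from the uniform boundedness of $\{\xk\}$" together with the local boundedness of $\D_f$, which is precisely what you carry out (bound $d_k$ via the $\delta_k$-expansion over a compact ball, add the noise bound for $g_k$, then induct on the convex-combination momentum update). Your extra care with the finitely many indices where $\delta_k$ or $\theta_k$ might exceed $1$ is a harmless refinement of the same route.
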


We then present the following auxiliary lemma, which directly follows from the uniform boundedness of $\{\xk\}$, $\{\mk\}$ and $\{g_k\}$ in Lemma \ref{Le_UB_mk_gk_ST}, together with the local boundedness of the mappings $\D_f$ and $W$. 
\begin{lem}
	\label{Le_UB_W_ST}
	Suppose Assumption \ref{Assumption_f} and Assumption \ref{Assumption_framework_ST} hold. Then there exists a constant $M_{W}>0$ such that $\sup_{k\geq 0} \norm{W(g_k, \mkp)} \leq  M_{W}$ holds almost surely. 
\end{lem}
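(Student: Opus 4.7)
The plan is to combine the almost-sure uniform bounds on $\{g_k\}$ and $\{\mkp\}$ from Lemma \ref{Le_UB_mk_gk_ST} with the local boundedness of the mapping $W$ postulated in Assumption \ref{Assumption_framework_ST}(2), via the standard fact that a locally bounded map is globally bounded on any compact set. Because the lemma itself is flagged as a direct consequence of the preceding results, no new analytical machinery should be required; the argument reduces to a compactness wrap-up.

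First I would invoke Lemma \ref{Le_UB_mk_gk_ST} to deduce that, almost surely, $\norm{g_k}\leq M_d$ and $\norm{\mkp}\leq M_d$ hold uniformly in $k$. Consequently the sequence $\{(g_k,\mkp)\}$ is confined, with probability one, to the compact set $\ca{K} := \bb{B}_{M_d}(0)\times \bb{B}_{M_d}(0)$. Next, since $W$ is locally bounded by Assumption \ref{Assumption_framework_ST}(2), every point of $\ca{K}$ admits an open neighborhood on which $\norm{W}$ is bounded. Extracting a finite subcover of $\ca{K}$ and taking the maximum of the associated local bounds produces a single deterministic constant $M_W$ satisfying $\norm{W(u,w)}\leq M_W$ for all $(u,w)\in\ca{K}$. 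Specializing to $(u,w) = (g_k,\mkp)$ yields $\sup_{k\geq 0}\norm{W(g_k,\mkp)} \leq M_W$ almost surely.

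I do not foresee any substantive obstacle in executing this plan; the result is essentially a textbook application of compactness combined with local boundedness. The only mild subtleties are to ensure the bounded-input enclosure is phrased in the correct almost-sure sense (which is inherited directly from Lemma \ref{Le_UB_mk_gk_ST}) and, if one wishes to be fastidious about the domain of $W$ as written in Assumption \ref{Assumption_framework_ST}(2), to observe that any auxiliary argument slot is populated by iterates that are themselves uniformly bounded by the same reasoning, so that the enclosing compact set can always be enlarged to accommodate them without affecting the final constant.
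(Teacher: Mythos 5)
Your proposal is correct and matches the paper's own (unwritten) argument: the paper states that the lemma follows directly from the uniform boundedness of $\{g_k\}$ and $\{\mk\}$ in Lemma \ref{Le_UB_mk_gk_ST} together with the local boundedness of $W$, which is precisely your compactness-plus-finite-subcover argument. Your remark about the extra argument slot of $W$ correctly handles a minor notational inconsistency in Assumption \ref{Assumption_framework_ST}(2).
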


Let $\ca{P}_+(v) := \max\{v, 0\}$, and $\ca{U}(x, m) := \{d \in \Rn_+: \norm{d} \leq M_{W}\}$. Consider the set-valued mapping $\ca{G}: \Rn \times \Rn \times \Rn \rightrightarrows \Rn \times \Rn \times \Rn$ defined by 
\begin{equation}
	\label{Eq_mapping_G}
	\ca{G}(x,m,v) := \left[\begin{matrix}
		(\ca{P}_+(v)  + \varepsilon )^{-\frac{1}{2}} \odot \left(   m + \sigma x \right)\\
		\tau_1 m - \tau_1 \D_f(x)\\
		\tau_2 v - \tau_2  \ca{U}(x,m)\\
	\end{matrix}\right], 
\end{equation}
and the following differential inclusion:
\begin{equation} 
	\label{Eq_DI_ST}
	\left(\frac{\mathrm{d}x}{\mathrm{d}t}, \frac{\mathrm{d}m}{\mathrm{d}t}, \frac{\mathrm{d}v}{\mathrm{d}t}\right)
	\in 
	-\ca{G}(x,m,v) .
\end{equation}

In the following lemma, we prove that the set-valued mapping $\ca{G}$ is capable of characterizing the update direction of $\{(\xk, \mk, \vk)\}$ in the framework \eqref{Eq_Framework}. The proof straightforwardly follows from Lemma \ref{Le_UB_W_ST}, hence we omit it for simplicity. 
\begin{lem}
	\label{Le_inclusion_vk_ST}
	Suppose Assumption \ref{Assumption_f} and Assumption \ref{Assumption_framework_ST} hold. Then the inclusion 
	\begin{equation}
		\vkp \in \vk - \tau_2\eta_k (\vk - \ca{U}(\xk, \mk))
	\end{equation}
	holds for any $k\geq 0$. Furthermore, $\sup_{k\geq 0} \norm{\vkp} <\infty$ holds almost surely. 
\end{lem}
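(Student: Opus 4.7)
The plan is to unpack the update rule for $\{\vk\}$ prescribed by Assumption \ref{Assumption_framework_ST}(2) and verify directly that its right-hand side falls inside the set-valued selection $\vk - \tau_2 \eta_k(\vk - \ca{U}(\xk, \mk))$. First I would observe that by the codomain stated in Assumption \ref{Assumption_framework_ST}(2), every value of $W$ is componentwise non-negative, i.e.\ $W(g_k, \mkp) \in \Rn_+$. Combined with the uniform bound $\norm{W(g_k, \mkp)} \leq M_W$ furnished by Lemma \ref{Le_UB_W_ST}, this places $W(g_k, \mkp)$ inside the fixed set $\ca{U}(\xk, \mk) = \{d \in \Rn_+: \norm{d}\leq M_W\}$ almost surely. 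Substituting this membership into the prescribed update $\vkp = \vk - \tau_2 \eta_k(\vk - W(g_k, \mkp))$ delivers the asserted inclusion.

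For the uniform boundedness claim, I would rewrite the update as
\begin{equation*}
\vkp = (1-\tau_2 \eta_k)\vk + \tau_2 \eta_k W(g_k, \mkp),
\end{equation*}
so that whenever $\tau_2 \eta_k \in [0,1]$ the new iterate is a convex combination of $\vk$ and a point of norm at most $M_W$. To secure this regime eventually, I would chain together the hypotheses of Assumption \ref{Assumption_framework_ST}(4): the condition $\lim_{k\to\infty}\theta_k \log(k) = 0$ forces $\theta_k \to 0$, and the limit $\theta_k/\eta_k \to \tau_1 > 0$ then forces $\eta_k \to 0$. Hence there exists some (deterministic) $k_0$ such that $\tau_2 \eta_k \leq 1$ for all $k \geq k_0$. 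A one-line induction from $k_0$ using the convex-combination form gives $\norm{\vk} \leq \max\{\norm{v_{k_0}}, M_W\}$ for every $k \geq k_0$, and the finite head $\{v_0, \ldots, v_{k_0}\}$ is trivially bounded because each step applies a bounded affine update to a bounded vector. Taking the maximum of the two bounds yields $\sup_{k\geq 0} \norm{\vk} < \infty$ almost surely.

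The argument has no genuine obstacle: the first part is a single set-membership verification, and the only piece of bookkeeping in the second part is extracting $\eta_k \to 0$ from Assumption \ref{Assumption_framework_ST}(4) so that the convex-combination regime is eventually in force. This is consistent with the paper's remark that the result follows straightforwardly from Lemma \ref{Le_UB_W_ST}.
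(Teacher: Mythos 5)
Your proof is correct and is exactly the straightforward argument the paper has in mind when it omits the proof: the inclusion is the set-membership $W(g_k,\mkp)\in\ca{U}(\xk,\mk)$ guaranteed by the codomain $\Rn_+$ together with Lemma \ref{Le_UB_W_ST}, and the almost sure boundedness follows from the convex-combination form of the update once $\tau_2\eta_k\leq 1$. Your extra care in deriving $\eta_k\to 0$ from Assumption \ref{Assumption_framework_ST}(4) to justify the eventual convex-combination regime is a legitimate and slightly more explicit treatment than the paper offers, but it does not change the substance of the argument.
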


Let $\partial \ca{P}_+$ be the generalized Jacobian of the mapping $\ca{P}_+$, and define  the function $h: \Rn \times \Rn \times \Rn \to \bb{R}$ as 
\begin{equation}
	\label{Eq_Lyapunov_func_ST}
	h(x,m,v) = f(x) + \frac{\sigma}{2} \norm{x}^2 + \frac{1}{2\tau_1} \inner{m +\sigma x, (\ca{P}_+(v)  + \varepsilon )^{-\frac{1}{2}} \odot (m+\sigma x)}. 
\end{equation}
The next Lemma \ref{Le_h_conservative_field_ST} presents the formulation of the conservative field of $h$.

\begin{lem}
	\label{Le_h_conservative_field_ST}
	Suppose Assumption \ref{Assumption_f}  and Assumption \ref{Assumption_framework_ST} hold. Then $h$ is a potential function that admits 
	\begin{equation}
		\D_h(x, m, v) = 
		\left[\begin{matrix}
			\D_f(x) + \sigma x + \frac{\sigma}{\tau_1}(\ca{P}_+(v) +\varepsilon)^{-\frac{1}{2}} {\odot} (m+\sigma x)\\
			\frac{1}{\tau_1} (\ca{P}_+(v) + \varepsilon )^{-\frac{1}{2}} \odot (m +\sigma x) \\
			-\frac{1}{4\tau_1} (m +\sigma x)^2 \odot (\ca{P}_+(v) +\varepsilon)^{-\frac{3}{2}} \odot \partial \ca{P}_+(v)
		\end{matrix}\right]
	\end{equation}
	as its conservative field. 
\end{lem}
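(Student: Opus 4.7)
The plan is to decompose $h = h_1 + h_2 + h_3$, where $h_1(x,m,v) := f(x)$ and $h_2(x,m,v) := \frac{\sigma}{2}\norm{x}^2$ depend only on $x$, and
\[
h_3(x,m,v) := \frac{1}{2\tau_1}\inner{m+\sigma x,\, (\ca{P}_+(v)+\varepsilon)^{-\frac{1}{2}} \odot (m+\sigma x)}
\]
is the coupling term. I would then assemble $\D_h$ from the constituent pieces using the sum and chain rules for conservative fields \cite[Corollary 4 and Lemma 5]{bolte2021conservative}. The contributions of $h_1$ and $h_2$ are immediate: $h_1$ admits the conservative field whose $x$-slot is $\D_f(x)$ with zeros in the $m$ and $v$ slots, and $h_2$ is $C^\infty$ with gradient $(\sigma x, 0, 0)$. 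Consequently every remaining term in the claimed $\D_h$ must arise from the analysis of $h_3$.

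The key observation for $h_3$ is that $\ca{P}_+$ is piecewise linear, hence definable and path-differentiable, and its conservative Jacobian coincides with its generalized Jacobian $\partial \ca{P}_+$. Since $\varepsilon>0$, the outer scalar map $u \mapsto (u+\varepsilon)^{-\frac{1}{2}}$ is $C^\infty$ on $[0,\infty)$, so applying the chain rule entry-wise shows that $v \mapsto (\ca{P}_+(v)+\varepsilon)^{-\frac{1}{2}}$ admits $-\frac{1}{2}(\ca{P}_+(v)+\varepsilon)^{-\frac{3}{2}}\odot \partial \ca{P}_+(v)$ as its conservative field. I would then treat $h_3$ as the product of the smooth quadratic weight $(m+\sigma x)^2$ (trivially differentiable in $(x,m)$) with this path-differentiable factor in $v$, invoking the product rule for conservative fields. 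The factor $\frac{1}{2\tau_1}\cdot 2 = \frac{1}{\tau_1}$ in the $x$- and $m$-slots and the factor $\frac{1}{2\tau_1}\cdot(-\frac{1}{2}) = -\frac{1}{4\tau_1}$ in the $v$-slot emerge from the elementary bookkeeping and reproduce exactly the three components of the asserted $\D_h$.

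The main delicate point will be to justify that this formal slot-by-slot calculation genuinely defines a conservative field in the sense of Definition \ref{Defin_conservative_field}, rather than just a pointwise candidate for a subdifferential. I would handle this by invoking the chain rule and sum rule of \cite{bolte2021conservative} at each decomposition step, so that the path-integral condition \eqref{Eq_Defin_Conservative_mappping} never has to be checked directly. The remaining structural properties---graph-closedness and nonempty compact convex-valuedness implicit in calling $\D_h$ a conservative field---are inherited because $\partial \ca{P}_+$ enjoys them and the other operations in the formula are continuous with closed images on every bounded subset of $(x,m,v)$.
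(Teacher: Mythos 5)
Your proposal is correct and follows essentially the same route as the paper: the paper likewise treats $h$ as $g(x)$ plus the coupling term, notes that the coupling term is semi-algebraic (hence definable and path-differentiable), and invokes the sum and chain rules for conservative fields from \cite{bolte2021conservative} to obtain exactly the displayed $\D_h$, with convex-valuedness inherited from $\D_f$ and $\partial\ca{P}_+$. Your version merely makes explicit the slot-by-slot bookkeeping that the paper leaves implicit, and your coefficients $\frac{\sigma}{\tau_1}$, $\frac{1}{\tau_1}$, $-\frac{1}{4\tau_1}$ check out.
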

\begin{proof}
	Notice that $f$ is a potential function that admits $\D_f$  as its conservative field, and the function $(x,m,v)\mapsto \inner{m +\sigma x, (\ca{P}_+(v)  + \varepsilon )^{-\frac{1}{2}} \odot (m+\sigma x)}$ is semi-algebraic and thus definable. Then by the chain rule for conservative field \cite{bolte2021conservative}, we can conclude that $h$ is a potential function that admits $\D_h$ as its conservative field. Moreover, as $\D_f$ and $\partial \ca{P}_+$ are convex-valued over $\Rn$, it holds that $\D_h$ is convex-valued over $\Rn\times \Rn \times \Rn$.   This completes the proof. 
\end{proof}

\begin{prop}
	\label{Prop_Lyapunov_ST}
	Suppose Assumption \ref{Assumption_f} and Assumption \ref{Assumption_framework_ST} hold. Then $h$ is a Lyapunov function for the differential inclusion \eqref{Eq_DI_ST} with the stable set $\{(x,m,v)\in \Rn \times \Rn \times \Rn: 0\in \D_g(x), m +\sigma x = 0\}$. 
\end{prop}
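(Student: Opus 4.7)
The plan is to verify the two conditions of Definition~\ref{Defin_Lyapunov_function} by computing $\frac{\mathrm{d}}{\mathrm{d}t} h(\gamma(t))$ along any solution $\gamma(t) = (x(t), m(t), v(t))$ of \eqref{Eq_DI_ST}. By Lemma~\ref{Le_h_conservative_field_ST} together with the chain rule for conservative fields, for almost every $t \geq 0$ one has $\frac{\mathrm{d}}{\mathrm{d}t} h(\gamma(t)) = \inner{\gamma'(t), d}$ for every $d = (d_1, d_2, d_3) \in \D_h(\gamma(t))$, the value being independent of the selection. From the differential inclusion, $\dot x = -(\ca{P}_+(v) + \varepsilon)^{-1/2} \odot (m + \sigma x)$, while $\dot m = -\tau_1 m + \tau_1 d'_f$ for some $d'_f \in \D_f(x)$ and $\dot v = -\tau_2 v + \tau_2 u$ for some $u \in \ca{U}(x,m) \subset \Rn_+$. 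The core idea is a compatible selection: take $d_1$ using the same $\D_f$-element $d'_f$ that appears in $\dot m$, take the (single-valued) $d_2$, and let $d_3$ correspond to any $\alpha \in \partial\ca{P}_+(v)$ (element-wise in $[0,1]$, with $\alpha_i = 0$ when $v_i < 0$).

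Writing $w := m + \sigma x$, the $d'_f$ cross-terms in $\inner{\dot x, d_1} + \inner{\dot m, d_2}$ cancel under this choice, leaving
\begin{equation*}
\inner{\dot x, d_1} + \inner{\dot m, d_2} = -\sum_{i} (\ca{P}_+(v_i)+\varepsilon)^{-1/2}\, w_i^2 - \frac{\sigma}{\tau_1}\sum_{i} (\ca{P}_+(v_i)+\varepsilon)^{-1}\, w_i^2.
\end{equation*}
For the $v$-term, a coordinate-wise bound using $u_i \geq 0$, $\alpha_i \in [0,1]$, $\alpha_i = 0$ on $\{v_i < 0\}$, and $v_i/(v_i + \varepsilon) \leq 1$ on $\{v_i > 0\}$ yields $\inner{\dot v, d_3} \leq \frac{\tau_2}{4\tau_1}\sum_i (\ca{P}_+(v_i)+\varepsilon)^{-1/2}\, w_i^2$. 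Invoking $\tau_1 \geq \tau_2/4$ from Assumption~\ref{Assumption_framework_ST}(4), the three pieces combine into
\begin{equation*}
\frac{\mathrm{d}}{\mathrm{d}t} h(\gamma(t)) \leq -\Bigl(1 - \tfrac{\tau_2}{4\tau_1}\Bigr) \sum_i (\ca{P}_+(v_i)+\varepsilon)^{-1/2}\, w_i^2 - \frac{\sigma}{\tau_1}\sum_i (\ca{P}_+(v_i)+\varepsilon)^{-1}\, w_i^2 \leq 0,
\end{equation*}
and integrating gives condition~(1) of Definition~\ref{Defin_Lyapunov_function} with stable set $\ca{B} = \{(x,m,v) : 0 \in \D_g(x),\ m + \sigma x = 0\}$.

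Condition~(2) will follow by contradiction: suppose $\gamma(0) \notin \ca{B}$ and $h(\gamma(t_0)) = h(\gamma(0))$ for some $t_0 > 0$. Monotonicity forces $h$ constant on $[0, t_0]$, hence $\frac{\mathrm{d}}{\mathrm{d}t} h(\gamma(t)) = 0$ almost everywhere, and since the second summand above is strictly negative unless $w = 0$, we obtain $w(t) \equiv 0$ on $[0, t_0]$. Then $\dot x \equiv 0$, so $x(t) \equiv x_0$, and $\dot w = \dot m + \sigma \dot x \equiv 0$ combined with $\dot m \in -\tau_1 m(t) + \tau_1 \D_f(x(t)) = \tau_1 \sigma x_0 + \tau_1 \D_f(x_0)$ forces $0 \in \D_f(x_0) + \sigma x_0 = \D_g(x_0)$; together with $w(0) = 0$ this contradicts $\gamma(0) \notin \ca{B}$. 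The main technical subtlety will be justifying that the inner product $\inner{\gamma'(t), d}$ is genuinely independent of the $\alpha$-selection on the measure-zero subset of times where some $v_i(t) = 0$ while $w_i(t) \neq 0$; this should rest on the almost-everywhere formulation of the conservative-field chain rule, together with the fact that the $\dot v$-factor multiplying an ambiguous $\alpha_i$ can only contribute on a negligible set.
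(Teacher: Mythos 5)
Your proposal is correct and follows essentially the same route as the paper's proof: the same compatible selection of the $\D_f$-element in $\D_h$ to cancel the cross-terms, the same coordinate-wise bound on the $v$-component using $u_i\geq 0$ and $\partial\ca{P}_+(v)\odot v=\ca{P}_+(v)$, the same use of $\tau_1\geq\tau_2/4$ to absorb the $\tfrac{\tau_2}{4\tau_1}$ term, and the same contradiction argument for condition (2). The only cosmetic difference is that the paper sidesteps your final subtlety about selection-independence by integrating $\min_{l\in\D_h}\inner{l,\dot\gamma}$ directly via the potential-function formula of Definition \ref{Defin_conservative_field_path_int}, which is equivalent to the almost-everywhere chain rule you invoke.
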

\begin{proof}
	For any trajectory of the differential inclusion \eqref{Eq_DI_ST}, there exists $l_f: \bb{R}_+ \to \Rn$ and $l_u: \bb{R}_+ \to \Rn$ such that $l_f(s) \in \D_f(x(s))$ and $l_u(s) \in \ca{U}(x(s), m(s))$ for almost every $s \geq 0$, and 
	\begin{equation}
		\left( \dot{x}(s), \dot{m}(s), \dot{v}(s) \right) 
		= 
		\left[\begin{matrix}
			-(\ca{P}_+(v(s)) + \varepsilon )^{-\frac{1}{2}} \odot \left(   m(s) + \sigma x(s) \right)\\
			-\tau_1 m(s) + \tau_1 l_f(s)\\
			-\tau_2 \ca{P}_+(v(s)) + \tau_2  l_u(s)\\
		\end{matrix}\right].
	\end{equation}
	
	Then from the formulation of $h$, we have
	\begin{equation*}
		\small
		\begin{aligned}
			&\inner{\D_h(x(s), m(s), v(s)), (\dot{x}(s), \dot{m}(s), \dot{v}(s)) }\\
			={}& -\inner{\D_f(x(s)) + \sigma x(s) + \frac{\sigma}{\tau_1}(\ca{P}_+(v(s))+\varepsilon)^{-\frac{1}{2}} \odot (m(s)+\sigma x(s)), (\ca{P}_+(v(s)) + \varepsilon )^{-\frac{1}{2}} \odot \left(   m(s) + \sigma x(s) \right)} \\
			& + \inner{(\ca{P}_+(v(s)) + \varepsilon )^{-\frac{1}{2}} \odot (m(s) +\sigma x(s)), -m(s) +  l_f(s)} \\
			& + \frac{\tau_2}{4\tau_1} \inner{(m(s) +\sigma x(s))^2 \odot (\ca{P}_+(v(s))+\varepsilon)^{-\frac{3}{2}} \odot \partial \ca{P}_+(v(s)), v(s) - l_u(s)  }\\
			\ni{}& -\inner{l_f(s) + \sigma x(s) + \frac{\sigma}{\tau_1}(\ca{P}_+(v(s))+\varepsilon)^{-\frac{1}{2}} \odot (m(s)+\sigma x(s)), (\ca{P}_+(v(s)) + \varepsilon )^{-\frac{1}{2}} \odot \left(   m(s) + \sigma x(s) \right)} \\
			& + \inner{(\ca{P}_+(v(s)) + \varepsilon )^{-\frac{1}{2}} \odot (m(s) +\sigma x(s)), -m(s) +  l_f(s)} \\
			& + \frac{\tau_2}{4\tau_1} \inner{(m(s) +\sigma x(s))^2 \odot (\ca{P}_+(v(s))+\varepsilon)^{-\frac{3}{2}} \odot \partial \ca{P}_+(v(s)), v(s) - l_u(s)  }\\
			\leq{}& -\frac{\sigma}{\tau_1} \inner{ (\ca{P}_+(v(s))+\varepsilon)^{-1} \odot (m(s)+\sigma x(s)), m(s)+\sigma x(s) }  \\
			& - \inner{ (\ca{P}_+(v(s))+\varepsilon)^{-\frac{1}{2}} \odot (m(s)+\sigma x(s)), m(s)+\sigma x(s) }\\
			& + \frac{\tau_2}{4\tau_1} \inner{(m(s) +\sigma x(s))^2, \ca{P}_+(v(s)) \odot (\ca{P}_+(v(s))+\varepsilon)^{-\frac{3}{2}}  }\\
			\leq{}& -\frac{\sigma}{\tau_1} \inner{ (\ca{P}_+(v(s))+\varepsilon)^{-1} \odot (m(s)+\sigma x(s)), m(s)+\sigma x(s) },
		\end{aligned}
	\end{equation*}
	Here the first inequality follows from the fact that $l_u(s) \geq 0$ and $\partial\ca{P}_+(v) \odot v = \ca{P}_+(v)$. 
	Therefore, we can conclude that for any initial point $(x(0), m(0), v(0)) \in \Rn \times \Rn \times \Rn$, it holds for any $t \geq 0$ that  
	\begin{equation}
		\label{Eq_Prop_Lyapunov_ST_0}
		\begin{aligned}
			&h(x(t), m(t), v(t)) - h(x(0), m(0), v(0)) \\
			\leq{}&  \int_{0}^{t} {\min_{l \in \D_h(x(s), m(s), v(s))}} \inner{l, (\dot{x}(s), \dot{m}(s), \dot{v}(s)) } \mathrm{d}s\\
			\leq{}&  -\frac{\sigma}{\tau_1} \int_{0}^{t}  \inner{ (\ca{P}_+(v(s))+\varepsilon)^{-1} \odot (m(s)+\sigma x(s)), m(s)+\sigma x(s) } \mathrm{d}s. 
		\end{aligned}
	\end{equation}
	As a result, we can conclude that for any  trajectory of the differential inclusion \eqref{Eq_DI_ST}, it holds for any $t > 0$ that $h(x(t), m(t), v(t)) \leq  h(x(0), m(0), v(0))$.
	
	Now consider the case when $(x(0), m(0), v(0)) \notin  \{(x,m,v)\in \Rn \times \Rn \times \Rn: 0\in \D_g(x), m +\sigma x = 0\}$.  Suppose there exists some $T > 0$ such that  
	\begin{equation}
		\label{Eq_Prop_Lyapunov_ST_1}
		h(x(T), m(T), v(T)) =  h(x(0), m(0), v(0)). 
	\end{equation}
	Then \eqref{Eq_Prop_Lyapunov_ST_0} implies that $m(s)+\sigma x(s) = 0$ 
	holds for almost every $s\in [0,T]$. Therefore, $\dot{m}(s) + \sigma \dot{x}(s) =0$ and $\dot{x}(s) = 0$ hold for almost every $s \in [0, T]$. As a result, we have 
	\begin{equation*}
		0 = \dot{m}(s) \in  -\tau_1 m(s) + \tau_1 \D_f(x(s)) = \tau_1 \sigma x(s) + \tau_1 \D_f(x(s)) 
	\end{equation*}
	holds for almost every $s \in [0, T]$. Together with the facts that $(x(t), m(t), v(t))$ is absolutely continuous and $\D_f$ is graph-closed and locally bounded, we have that 
	\begin{equation*}
		m(0) + \sigma x(0) = 0, \quad 0 \in \D_f(x(0)) + \sigma x(0) \;=\D_g(x(0)).
	\end{equation*}
	But the above contradicts the condition that
	$(x(0), m(0), v(0)) \notin  \{(x,m,v): 0\in \D_g(x), m +\sigma x = 0\}$. 
	As a result, we can conclude that for any $T>0$, whenever $(x(0), m(0), v(0)) \notin  \{(x,m,v): 0\in \D_g(x), m +\sigma x = 0\}$, it holds that 
	\begin{equation*}
		h(x(T), m(T), v(T)) <  h(x(0), m(0), v(0)). 
	\end{equation*}
	This completes the proof. 
\end{proof}

In the next proposition, we show that the interpolated process of the sequence $\{(\xk,\mk, \vk)\}$ is a perturbed solution to the differential inclusion 
\eqref{Eq_DI_ST}.

\begin{prop}
	\label{Prop_perturbed_solution_ST}
	Suppose Assumption \ref{Assumption_f} and Assumption \ref{Assumption_framework_ST} hold. Then almost surely,  the interpolated process of $\{(\xk, \mk, \vk)\}$ is a perturbed solution for the differential inclusion \eqref{Eq_DI_ST}. 
\end{prop}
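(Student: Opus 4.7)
The strategy is to apply Lemma~\ref{Le_interpolated_process} to the joint sequence $\zk := (\xk, \mk, \vk)$, viewed as iterates in $\Rn \times \Rn \times \Rn$, with the set-valued mapping $\ca{G}$ of \eqref{Eq_mapping_G} and common stepsize $\eta_k$. The first step is to rewrite the three component updates of \eqref{Eq_Framework} so that the joint increment takes the form $\zkp - \zk = \eta_k(\tilde{d}_k + \tilde{\xi}_{k+1})$. A natural choice of the noise component is $\tilde{\xi}_{k+1} := (0, \tau_1 \xi_{k+1}, 0)$, which is a uniformly bounded martingale difference sequence by Assumption~\ref{Assumption_framework_ST}(6); all remaining deterministic terms are collected into $\tilde{d}_k$.

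The heart of the argument is to show $\tilde{d}_k \in -\ca{G}^{\delta_k'}(\zk)$ for some sequence $\delta_k' \to 0$, which I would establish coordinate by coordinate. For the $v$-component the match is exact, since $W(g_k, \mkp)\in\ca{U}(\xk,\mk)$ by Lemma~\ref{Le_UB_W_ST} and the definition of $\ca{U}$ as the constant ball of radius $M_W$ in $\Rn_+$. For the $m$-component, the decomposition
\[
\theta_k(g_k - \mk) = \eta_k[\tau_1 d_k - \tau_1 \mk] + (\theta_k - \eta_k \tau_1)(d_k - \mk) + \theta_k \xi_{k+1}
\]
isolates the main direction; the $(\theta_k - \eta_k\tau_1)$ term is $o(\eta_k)$ by Assumption~\ref{Assumption_framework_ST}(4), and Assumption~\ref{Assumption_framework_ST}(5) places $\tau_1 d_k - \tau_1 \mk$ within an $O(\delta_k)$ perturbation of an element of the $m$-coordinate of $-\ca{G}$ evaluated at a nearby point $(\xk',\mk,\vk)$ with $\xk'\in\bb{B}_{\delta_k}(\xk)$. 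For the $x$-component the actual increment reads $H$ and the momentum at the \emph{updated} iterates $(\vkp,\mkp)$, whereas $\ca{G}$ evaluates at $(\mk,\vk)$; exploiting $\|\vkp-\vk\| = O(\eta_k)$ from Lemma~\ref{Le_inclusion_vk_ST}, $\|\mkp-\mk\|=O(\theta_k)$, and the Lipschitz continuity of $v \mapsto (\ca{P}_+(v)+\varepsilon)^{-1/2}$ on the bounded range of $\{\vk\}$ (guaranteed by $\varepsilon>0$) produces a vanishing discrepancy. Combining the three coordinate bounds yields a single $\delta_k' \to 0$ with $\tilde{d}_k \in -\ca{G}^{\delta_k'}(\zk)$.

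The remaining step is to verify the three conditions of Lemma~\ref{Le_interpolated_process}. Uniform boundedness of $\{\zk\}$ and $\{\tilde{d}_k\}$ follows from Assumption~\ref{Assumption_framework_ST}(1), Lemma~\ref{Le_UB_mk_gk_ST}, Lemma~\ref{Le_UB_W_ST}, Lemma~\ref{Le_inclusion_vk_ST}, local boundedness of $\D_f$, and the bound $H(\cdot)\leq \varepsilon^{-1/2}$. For the martingale condition, since $\theta_k/\eta_k \to \tau_1 > 0$ together with $\lim_k \theta_k\log k = 0$ implies $\lim_k \eta_k \log k = 0$, Proposition~\ref{Prop_UB_martingale_difference_sequence} applies to $\{\tilde{\xi}_{k+1}\}$ with stepsizes $\{\eta_k\}$, yielding the required control of the weighted partial sums. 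The coordinate analysis above delivers the $\ca{G}^{\delta_k'}$-inclusion condition, and Lemma~\ref{Le_interpolated_process} then gives the conclusion.

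The principal obstacle is the first step: writing the triple's increment with a single stepsize $\eta_k$ must simultaneously reconcile the $m$-coordinate (whose native stepsize is $\theta_k$) and the $x$-coordinate (which reads the preconditioner at the \emph{updated} $\vkp$ rather than $\vk$) with $\ca{G}$ evaluated at the base point $\zk$. The key quantitative ingredients for absorbing these mismatches into the $\ca{G}^{\delta_k'}$-expansion are the one-step bounds $\|\vkp - \vk\|, \|\mkp - \mk\| = O(\eta_k)$ and the single-timescale ratio $\theta_k/\eta_k \to \tau_1$.
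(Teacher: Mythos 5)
Your proposal is correct and follows essentially the same route as the paper: both apply Lemma~\ref{Le_interpolated_process} to the joint sequence $\{(\xk,\mk,\vk)\}$ with noise $(0,\tau_1\xi_{k+1},0)$ and absorb the timescale mismatch, the evaluation of $H$ at the updated $(\mkp,\vkp)$, and the $\D_f^{\delta_k}$ inexactness into a vanishing $\ca{G}^{\tau_k}$-expansion. Your coordinate-by-coordinate verification simply spells out in detail the one-line inclusion $(\xkp,\mkp,\vkp)\in(\xk,\mk,\vk)-\eta_k\ca{G}^{\tau_k}(\xk,\mk,\vk)-\eta_k(0,-\tau_1\xi_{k+1},0)$ that the paper asserts directly.
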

\begin{proof}
	From the uniform boundedness of $\{\mk\}$, $\{\vk\}$ and $\{g_k\}$ in Lemma \ref{Le_UB_mk_gk_ST} and Lemma \ref{Le_UB_W_ST}, and  Assumption \ref{Assumption_framework_ST}(4), we can conclude that $\lim_{k\to \infty} \norm{\mkp - \mk} + \norm{\vkp - \vk} = 0$. Therefore, there exists a sequence of random variables $\{\tau_k\}$ such that  almost surely, $\lim_{k\to \infty}\tau_k = 0$ holds and $\norm{\mkp - \mk} + \norm{\vkp - \vk} \leq \tau_k$. 
	
	Then from the formulation of  framework \eqref{Eq_Framework}, the sequence $\{(\xk, \mk, \vk)\}$ satisfies the following inclusion
	\begin{equation*}
		(\xkp, \mkp,\vkp) \in (\xk, \mk, \vk) - \eta_k \ca{G}^{\tau_k}(\xk, \mk, \vk) - \eta_k (0, -\tau_1 \xi_{k+1}, 0).
	\end{equation*}
	Then it directly follows from  Assumption \ref{Assumption_framework_ST}(4) and Proposition \ref{Prop_UB_martingale_difference_sequence} that 
	\begin{equation*}
		\lim_{s \to \infty} \sup_{s\leq i \leq \Lambda(\lambda_s + T)}\norm{ \sum_{k = s}^{i} \eta_k (0, \tau_1 \xi_{k+1}, 0)} =0. 
	\end{equation*}
	Therefore, we can conclude that the condition (1) and (2) in Lemma \ref{Le_interpolated_process} hold. Moreover, condition (3) in Lemma \ref{Le_interpolated_process} directly follows from Assumption \ref{Assumption_framework_ST}(1), Lemma \ref{Le_UB_mk_gk_ST} and Lemma \ref{Le_UB_W_ST}. Therefore, from Lemma \ref{Le_interpolated_process}, we can conclude that the interpolated process of $\{(\xk, \mk, \vk)\}$ is a perturbed solution for the differential inclusion \eqref{Eq_DI_ST}. This completes the proof.

\end{proof}

In the following theorem, we present the convergence properties of the sequence $\{(\xk, \mk, \vk)\}$, and prove that $\lim_{k\to \infty} \norm{\mk + \sigma \xk} = 0$ almost surely. 
\begin{theo}
	\label{The_convergence_ST}
	Suppose Assumption \ref{Assumption_f} and Assumption \ref{Assumption_framework_ST} hold. Then for the sequence $\{(\xk, \mk, \vk)\}$ generated by the framework \eqref{Eq_Framework}, almost surely, it holds that 
	\begin{enumerate}
		\item any cluster point of the sequence $\{\xk\}$ is a $\D_g$-stationary point of $g$;
		\item $\lim_{k\to \infty} \norm{\mk + \sigma \xk} = 0$;
		\item the sequence of function values $\{g(\xk)\}$ converges.
	\end{enumerate}
\end{theo}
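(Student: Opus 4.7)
The plan is to derive all three claims simultaneously by applying Theorem \ref{The_convergence_beniam} to the joint sequence $\{(\xk, \mk, \vk)\}$, with Lyapunov function $\phi = h$ from \eqref{Eq_Lyapunov_func_ST} and stable set $\ca{B} := \{(x,m,v)\in \Rn\times \Rn\times \Rn : 0 \in \D_g(x),\ m + \sigma x = 0\}$. Propositions \ref{Prop_Lyapunov_ST} and \ref{Prop_perturbed_solution_ST} already supply the Lyapunov-descent and perturbed-solution conditions required by Theorem \ref{The_convergence_beniam}, so only the first hypothesis needs verification.

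To check that hypothesis, I would first observe that the preconditioner $(\ca{P}_+(v)+\varepsilon)^{-1/2}$ is entrywise positive, so the last summand in \eqref{Eq_Lyapunov_func_ST} is non-negative; hence $h(x,m,v) \geq g(x) \geq \inf_{x\in\Rn} g(x) > -\infty$ by Assumption \ref{Assumption_f}(3). Next, on $\ca{B}$ we have $m + \sigma x = 0$ and therefore $h(x,m,v) = g(x)$, so $\{h(x,m,v) : (x,m,v)\in \ca{B}\} = \{g(x) : 0 \in \D_g(x)\}$, which has empty interior in $\bb{R}$ by Assumption \ref{Assumption_f}(2). Theorem \ref{The_convergence_beniam} then yields both that every cluster point of $\{(\xk,\mk,\vk)\}$ lies in $\ca{B}$ (immediately giving claim (1) by reading off the $x$-coordinate) and that the Lyapunov sequence $\{h(\xk,\mk,\vk)\}$ converges.

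For claim (2), cluster-point information is not quite enough; I would upgrade it to convergence of the distance $\mathrm{dist}((\xk,\mk,\vk), \ca{B})$ to zero. The joint sequence is uniformly bounded almost surely by Assumption \ref{Assumption_framework_ST}(1), Lemma \ref{Le_UB_mk_gk_ST}, and Lemma \ref{Le_inclusion_vk_ST}; the standard fact that a bounded sequence in a finite-dimensional space whose cluster-point set lies in a closed set $\ca{B}$ must approach $\ca{B}$ (otherwise a subsequence at positive distance would admit a further convergent sub-subsequence whose limit contradicts the distance bound) then gives $\mathrm{dist}((\xk,\mk,\vk), \ca{B}) \to 0$. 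Since every $(x',m',v')\in\ca{B}$ satisfies $m' + \sigma x' = 0$, taking $(x',m',v')$ as a near-optimizer of the distance yields
\begin{equation*}
\norm{\mk + \sigma \xk} = \norm{(\mk - m') + \sigma(\xk - x')} \leq (1+\sigma)\,\mathrm{dist}((\xk,\mk,\vk), \ca{B}),
\end{equation*}
which forces $\norm{\mk + \sigma \xk} \to 0$ almost surely and establishes claim (2).

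For claim (3), I would subtract to obtain
\begin{equation*}
h(\xk,\mk,\vk) - g(\xk) = \frac{1}{2\tau_1}\,\inner{\mk + \sigma \xk,\ (\ca{P}_+(\vk)+\varepsilon)^{-1/2} \odot (\mk + \sigma \xk)}.
\end{equation*}
The almost-sure boundedness of $\{\vk\}$ from Lemma \ref{Le_inclusion_vk_ST} keeps $(\ca{P}_+(\vk)+\varepsilon)^{-1/2}$ uniformly bounded, so claim (2) forces the right-hand side to vanish and $\{g(\xk)\}$ inherits the convergence of $\{h(\xk,\mk,\vk)\}$. The main obstacle is really the passage in (2) from cluster-point membership to a genuine limit; everything else is a careful bookkeeping application of the two preceding propositions and the general convergence theorem.
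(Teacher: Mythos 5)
Your proposal is correct and follows essentially the same route as the paper's proof: apply Theorem \ref{The_convergence_beniam} to the joint sequence with Lyapunov function $h$ and stable set $\{(x,m,v): 0\in\D_g(x),\ m+\sigma x=0\}$, upgrade the cluster-point statement to $\norm{\mk+\sigma\xk}\to 0$ via boundedness, and transfer convergence from $\{h(\xk,\mk,\vk)\}$ to $\{g(\xk)\}$ through the vanishing quadratic term. You are in fact slightly more careful than the paper in explicitly verifying hypothesis (1) of Theorem \ref{The_convergence_beniam} (boundedness below of $h$ and the empty-interior condition via Assumption \ref{Assumption_f}(2)) and in spelling out the passage from cluster points to a genuine limit in claim (2).
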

\begin{proof}
	From Proposition \ref{Prop_perturbed_solution_ST}, we can conclude that the interpolated process of $\{(\xk, \mk, \vk)\}$ is a perturbed solution for the differential inclusion \eqref{Eq_DI_ST}.  Moreover, Proposition \ref{Prop_Lyapunov_ST} illustrates that $h$ is a Lyapunov function for the differential inclusion \eqref{Eq_DI_ST} with stable set $\{(x,m,v)\in \Rn \times \Rn \times \Rn: 0\in \D_g(x), m +\sigma x = 0\}$. Then we can conclude that any cluster point of $\{(\xk, \mk, \vk)\}$ lies in the set $\{(x,m,v)\in \Rn \times \Rn \times \Rn: 0\in \D_g(x), m +\sigma x = 0\}$, and the sequence $\{h(\xk, \mk, \vk)\}$ converges. 
	
	As a result, we first conclude that any cluster point of $\{\xk\}$ lies in the set $\{x \in \Rn: 0\in \D_g(x)\}$, and any cluster point of $\{(\xk, \mk)\}$ lies in $\{(x, m) \in \Rn \times\Rn: \sigma x + m  = 0\}$. As a result, noting that $\{\sigma x_k+m_k\}$ is bounded in $\bb{R}^n$, it holds that $\lim_{k\to \infty} \norm{\sigma \xk + \mk} = 0$. Furthermore, notice that 
	\begin{equation*}
		\lim_{k\to \infty} |h(\xk, \mk, \vk) - g(\xk)| \leq \lim_{k\to \infty} \frac{1}{2\tau_1\sqrt{\varepsilon}}\norm{\sigma \xk +\mk}^2 = 0,
	\end{equation*}
	we can deduce that the sequence  $\{g(\xk)\}$ converges. This completes the proof. 
\end{proof}

Theorem \ref{The_convergence_ST} illustrates that $\lim_{k\to \infty} \norm{\xk - \yk} = 0$. 
Therefore, substituting the formulation of $\{\yk\}$ in \eqref{Eq_Defin_yk} into the update scheme of $\{\mk\}$ in framework \eqref{Eq_Framework}, we conclude that $\{\yk\}$ follows the same scheme as \eqref{Eq_update_yk}. Together with the fact that $\lim_{k\to \infty} \norm{\xk - \yk} = 0$, based on  the same proof techniques as in Lemma \ref{Le_approx_dk}, we can conclude that there exists a sequence of non-negative random variables $\{\tau_k\}$ such that $\lim_{k\to \infty}\tau_k = 0$ holds almost surely, and 
\begin{equation*}
	\ykp \in \yk - \frac{\theta_k}{\sigma}(\D_g^{\tau_k}(\yk) + \xi_{k+1}).
\end{equation*}
Then we have the following corollary showing that the interpolated process of the sequence $\{\yk\}$ is a perturbed solution of the differential inclusion \eqref{Eq_DI_SGD_yk}. The proof of Corollary \ref{Coro_perturbed_solution_yk_ST} is the same as Proposition \ref{Prop_perturbed_solution_yk}, hence is omitted for simplicity. 

\begin{coro}
	\label{Coro_perturbed_solution_yk_ST}
	Suppose Assumption \ref{Assumption_f} and Assumption \ref{Assumption_framework_ST} hold. Then the interpolated path of the sequence $\{\yk\}$ is a perturbed solution of the differential inclusion \eqref{Eq_DI_SGD_yk}. 
\end{coro}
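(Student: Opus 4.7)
The plan is to mirror the proof of Proposition \ref{Prop_perturbed_solution_yk} verbatim, since the excerpt has already done essentially all the preparatory work immediately above the corollary statement. What remains is just to verify the three hypotheses of Lemma \ref{Le_interpolated_process} for the sequence $\{\yk\}$ with stepsizes $\{\theta_k/\sigma\}$, noise sequence $\{-\xi_{k+1}/\sigma\}$, approximate evaluation $d_k + \sigma \yk$, and set-valued mapping $\D_g$.

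The starting point is the recursion displayed just before the corollary, namely $\ykp \in \yk - (\theta_k/\sigma)(\D_g^{\tau_k}(\yk) + \xi_{k+1})$, where $\{\tau_k\}$ is the non-negative random sequence with $\tau_k \to 0$ that the excerpt has just constructed by combining $\lim_{k\to\infty}\norm{\xk - \yk} = 0$ from Theorem \ref{The_convergence_ST}(2) with the approximation step from Lemma \ref{Le_approx_dk} (essentially: pick $\tilde{x}_k \in \bb{B}_{\delta_k}(\xk)$ with $\tilde{d}_k \in \D_f(\tilde{x}_k)$ close to $d_k$, then note $\tilde{x}_k$ is close to $\yk$ and $\tilde{d}_k + \sigma \tilde{x}_k \in \D_g(\tilde{x}_k)$). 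This plays the role of Condition (2) in Lemma \ref{Le_interpolated_process}.

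For Condition (1), I would apply Proposition \ref{Prop_UB_martingale_difference_sequence} directly to the rescaled martingale difference $\{-\xi_{k+1}/\sigma\}$, which remains uniformly bounded by Assumption \ref{Assumption_framework_ST}(6). The required decay $\lim_{k\to\infty}(\theta_k/\sigma)\log(k) = 0$ is exactly the hypothesis in Assumption \ref{Assumption_framework_ST}(4). For Condition (3), the uniform boundedness of $\{\yk\} = \{-\mk/\sigma\}$ and of $\{d_k + \sigma \yk\}$ follows from Assumption \ref{Assumption_framework_ST}(1), Lemma \ref{Le_UB_mk_gk_ST}, and the local boundedness of $\D_f$ (which bounds $\{d_k\}$ via the uniform boundedness of $\{\xk\}$ and of $\{\xi_k\}$). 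With the three conditions in hand, Lemma \ref{Le_interpolated_process} delivers the conclusion that the interpolated process of $\{\yk\}$ is a perturbed solution of \eqref{Eq_DI_SGD_yk}.

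I do not foresee any real obstacle: the single-timescale assumption enters only through Theorem \ref{The_convergence_ST}(2), which has already been established, and through the bound $\lim_{k\to\infty}\theta_k\log(k) = 0$, which is assumed directly. The main conceptual content—that the decoupled-weight-decay structure forces $\{\yk\}$ to shadow an SGD trajectory regardless of the stepsize timescale—is fully encoded in Theorem \ref{The_convergence_ST}(2), so the corollary is essentially a bookkeeping consequence.
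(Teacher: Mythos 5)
Your proposal is correct and follows essentially the same route as the paper, which likewise reduces the corollary to the inclusion $\ykp \in \yk - \frac{\theta_k}{\sigma}(\D_g^{\tau_k}(\yk) + \xi_{k+1})$ (obtained from Theorem \ref{The_convergence_ST}(2) and the argument of Lemma \ref{Le_approx_dk}) and then repeats the verification of the three conditions of Lemma \ref{Le_interpolated_process} exactly as in Proposition \ref{Prop_perturbed_solution_yk}. The only cosmetic slip is attaching the factor $1/\sigma$ to the noise rather than to the stepsize, which is immaterial since uniform boundedness of the martingale difference sequence is preserved under scaling.
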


\section{Application: Adam with Decoupled Weight Decay}

In this section, we propose a novel variant of Adam method, which is named as Adam with decoupled weight decay (AdamD). As an application of our theoretical analysis in Section 3 and Section 4, we show the convergence properties of AdamD directly from the results in Theorem \ref{The_convergence_Nondiminishing} and Theorem \ref{The_convergence_ST}. 

Throughout this section, we focus on the settings where $f$ in \eqref{Prob_Ori} takes the following finite-sum formulation:
\begin{equation}
	\label{Eq_f_finite_sum}
	f(x) = \frac{1}{N}\sum_{i = 1}^N f_i(x).
\end{equation}
Here we make the following assumptions on the functions $\{f_i: i \in [N]\}$ in \eqref{Eq_f_finite_sum}. 
\begin{assumpt}
	\label{Assumption_implementation_f}
	\begin{enumerate}
		\item For each $i \in [N]$, $f_i$ is a definable function that admits a definable set-valued mapping $\D_{f_i}$ as its conservative field.
		\item $\sup_{i \in [N], ~ x \in \Rn} \norm{\D_{f_i}(x)} <\infty$. 
		\item $f$ is bounded from below. 
	\end{enumerate}
\end{assumpt}

As demonstrated in \cite{bolte2021conservative}, for any neural network that is built from definable blocks, the conservative field corresponds the AD algorithms is a definable set-valued mapping. Hence, we can conclude that Assumption \ref{Assumption_implementation_f}(1) can be satisfied in a wide range of training tasks. Moreover, Assumption \ref{Assumption_implementation_f}(2) assumes the Lipschitz continuity of the function $f$, which is common in various existing works \cite{barakat2021convergence,guo2021novel,shi2021rmsprop,zhang2022adam}. 

Moreover, \cite[Corollary 4]{bolte2021nonsmooth} illustrates that $f$ is a path-differentiable function and admits $\frac{1}{N}\sum_{i = 1}^N \D_{f_i}$ as its conservative field. Therefore, in the rest of this section, we choose the conservative field $\D_f$ as
\begin{equation}
	\D_f(x) = \conv\left(\frac{1}{N}\sum_{i = 1}^N \D_{f_i}(x)  \right). 
\end{equation}
The detailed AdamD method is presented in Algorithm \ref{Alg:ADAM}. 
In our proposed AdamD method, the weight decay term $\sigma \xk$ is decoupled from the update schemes for $\{\mk\}$ and $\{\vk\}$. In particular,  the estimators $\{\vk\}$ are updated as an exponential moving average over $\{g_k^2\}$ with parameter $\beta \in (0,1)$. 
\begin{algorithm}[htbp]
	\begin{algorithmic}[1]  
		\Require Initial point $x_0 \in \Rn$, $m_0 \in \Rn$ and $v_0 \in \Rn_+$,  weight decay parameter $\sigma> 0$, safeguard parameter $\varepsilon > 0$, stepsizes $\eta \leq \frac{1}{\sigma \varepsilon}$ and $\beta \in (0, 1)$;
		\State Set $k=0$;
		\While{not terminated}
		\State Independently sample $i_k$ from $[N]$, and compute $g_k \in \D_{f_{i_k}}(\xk)$;
		\State Update the momentum term by $m_{k+1} = (1-  \theta_k) m_k + \theta_k   g_k$;
		\State Update the estimator $\vkp$ by $\vkp = (1-\beta) \vk + \beta g_k^2$;
		\State Update $\xk$ by $\xkp = \xk - \eta (\sqrt{\vkp} + \varepsilon )^{-1} \odot (  m_{k+1} +  \sigma \xk)$;
		\State $k = k+1$;
		\EndWhile
		\State Return $\xk$.
	\end{algorithmic}  
	\caption{Adam with decoupled weight decay (AdamD) for nonsmooth optimization problem \eqref{Prob_Ori}. }  
	\label{Alg:ADAM}
\end{algorithm}

Then based on the convergence properties of the framework \eqref{Eq_Framework} presented in  Theorem \ref{The_convergence_Nondiminishing}, the following theorem illustrates the convergence properties of Algorithm \ref{Alg:ADAM} with non-diminishing $\{\eta_k\}$. 

\begin{theo}
	\label{Theo_implementation_convergence_nondiminishing}
	Suppose Assumption \ref{Assumption_implementation_f} holds. Moreover, we assume that the sequence of stepsizes $\{\theta_k\}$ is a positive sequence that satisfies $\sum_{k=0}^{\infty} \theta_k = \infty$, $\lim_{k\to \infty} \theta_k \log(k) = 0$. Then almost surely, any cluster point of $\{\xk\}$ in Algorithm \ref{Alg:ADAM} is a $\D_g$-stationary point of $g$, and the sequence $\{g(\xk)\}$ converges. 
\end{theo}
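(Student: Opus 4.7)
The plan is to recognize AdamD as a concrete instance of the framework \eqref{Eq_Framework} and then invoke Theorem \ref{The_convergence_Nondiminishing}. It thus suffices to verify Assumption \ref{Assumption_f} and Assumption \ref{Assumption_framework} for the specific choices made in Algorithm \ref{Alg:ADAM}. I would first encode the stochastic subgradient $g_k \in \D_{f_{i_k}}(\xk)$ in the form required by the framework by setting $d_k := \mathbb{E}[g_k \mid \ca{F}_k]$ and $\xi_{k+1} := g_k - d_k$. Since $i_k$ is drawn uniformly from $[N]$, it follows that $d_k \in \frac{1}{N}\sum_{i=1}^N \D_{f_i}(\xk) \subset \D_f(\xk)$, which yields Assumption \ref{Assumption_framework}(4) with $\delta_k \equiv 0$. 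The residual $\xi_{k+1}$ is then a martingale difference sequence, and Assumption \ref{Assumption_implementation_f}(2) supplies a uniform bound $M_f$ on $\norm{\D_{f_i}(x)}$, giving $\sup_k \norm{\xi_{k+1}} \leq 2M_f$ almost surely, which is Assumption \ref{Assumption_framework}(5).

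Next I would verify the remaining structural hypotheses. Path-differentiability of $f$ and convex-valuedness of $\D_f$ follow from Assumption \ref{Assumption_implementation_f}(1), together with closure of conservative fields under finite averaging and convex hull. Definability of $f$ and of $\D_g = \D_f + \sigma \cdot$, inherited from definability of each $f_i$ and $\D_{f_i}$ and preserved under finite sum and convex hull, combined with Proposition \ref{Prop_definable_regularity}, delivers the empty-interior condition on $\{g(x) : 0 \in \D_g(x)\}$; boundedness from below of $g$ is immediate from Assumption \ref{Assumption_implementation_f}(3) and $\tfrac{\sigma}{2}\norm{x}^2 \geq 0$. Together these give Assumption \ref{Assumption_f}. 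For Assumption \ref{Assumption_framework}(1), a short induction on the update $\vkp = (1-\beta)\vk + \beta g_k^2$ shows that $\vk$ stays in $[0, \max\{\norm{v_0}_\infty, M_f^2\}]$ componentwise, hence $H(\vk) = (\sqrt{\vk}+\varepsilon)^{-1}$ lies between $\varepsilon_v := (M_f + \varepsilon)^{-1}$ and $M_v := 1/\varepsilon$. Assumption \ref{Assumption_framework}(2) then follows from Proposition \ref{Prop_UB} with $\nu = 0$, available because Assumption \ref{Assumption_implementation_f}(2) provides the required global bound on $\D_f$. Assumption \ref{Assumption_framework}(3) is secured by the constant stepsize $\eta$ prescribed in Algorithm \ref{Alg:ADAM} together with the hypotheses already imposed on $\{\theta_k\}$.

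With every hypothesis of Theorem \ref{The_convergence_Nondiminishing} verified, the two conclusions—that cluster points of $\{\xk\}$ are $\D_g$-stationary and that $\{g(\xk)\}$ converges—follow at once. The step that demands the most care is the verification of Assumption \ref{Assumption_f}(2): one must trace the definability of each $f_i$ and $\D_{f_i}$ through finite averaging, convex hull, and the translation by $\sigma x$, in order to legitimately apply Proposition \ref{Prop_definable_regularity}. The inductive uniform bound on $\{\vk\}$ (which simultaneously supplies $\varepsilon_v > 0$ and $M_v < \infty$) and the matching of the algorithmic stepsize constraint against $\eta < 2/(\sigma M_v)$ are then straightforward bookkeeping, so the theorem is genuinely a corollary of the framework-level result.
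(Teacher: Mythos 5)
Your proposal is correct and follows essentially the same route as the paper: define $d_k = \bb{E}[g_k\mid\ca{F}_k]$ and $\xi_{k+1}=g_k-d_k$ to fit the framework, verify Assumption \ref{Assumption_f} via definability and Proposition \ref{Prop_definable_regularity}, bound $\{\vk\}$ by $M_f^2$ to get Assumption \ref{Assumption_framework}(1), invoke Proposition \ref{Prop_UB} for the boundedness of $\{\xk\}$, and then apply Theorem \ref{The_convergence_Nondiminishing}. Your handling of Assumption \ref{Assumption_f}(2) is in fact slightly more careful than the paper's, since you explicitly trace definability through to $g$ and $\D_g$ before applying Proposition \ref{Prop_definable_regularity}.
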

\begin{proof}
	We first verify the validity of Assumption \ref{Assumption_f}. The definability of $f_i$ and $\D_{f_i}$ implies the definability of $f$ and $\D_f$, hence from \cite[Theorem 5]{bolte2021conservative}, $f$ is path-differentiable and the set $\{f(x): 0\in \D_f(x)\}$ is a finite subset of $\bb{R}$. This verifies the validity of Assumption \ref{Assumption_f}. 
	
	Moreover, let $\{\ca{F}_k\}$ be a sequence of $\sigma$-fields generated by $\{x_j, d_j,m_j: j\leq k\}$,  $d_k = \bb{E}[g_k|\ca{F}_k]$ and $\xi_{k+1} = g_k - d_k$. Then we can conclude that $d_k \in \D_f(\xk)$ and $\bb{E}[\xi_{k+1}|\ca{F}_k] = 0$. Moreover, Assumption \ref{Assumption_implementation_f}(2) illustrates that there exists a constant $M_f$ such that $\sup_{i \in [N], ~x \in \Rn} \norm{\D_f(x)} \leq M_f$. Thus we can conclude that $\sup_{k\geq 0}\norm{g_k} \leq M_f$ and $\sup_{k\geq 0}\norm{d_k} \leq M_f$ hold almost surely. Then $\sup_{k\geq 0}\norm{\xi_{k+1}} \leq 2M_f$    holds almost surely. This verifies the validity of Assumption \ref{Assumption_framework}(5). 
	
	Furthermore, from the update scheme in Step 5 of Algorithm \ref{Alg:ADAM}, we can conclude that  $\sup_{k\geq 0}\norm{\vk} \leq \sup_{k\geq 0} \norm{g_k^2} \leq M_f^2$. 
	This illustrates that Assumption \ref{Assumption_framework}(1) holds with $\varepsilon_v = \frac{1}{M_f+\varepsilon}$ and $M_v = \frac{1}{\varepsilon}$. In addition, Proposition \ref{Prop_UB} directly guarantees that Assumption \ref{Assumption_framework}(2) holds. Assumption \ref{Assumption_framework}(3) follows from the conditions on $\{\eta_k\}$ and $\{\theta_k\}$, while Assumption \ref{Assumption_framework}(4) is implied by the fact that $d_k = \bb{E}[g_k|\ca{F}_k] \in \D_f(\xk)$. Therefore, from Theorem \ref{The_convergence_Nondiminishing}, we can conclude that any cluster point of  the sequence $\{\xk\}$ is a $\D_g$-stationary point of $g$, and the sequence $\{g(\xk)\}$ converges. This completes the proof. 
	
\end{proof}

In the following theorem, we establish the convergence properties for Algorithm \ref{Alg:ADAM} when it is equipped with single-timescale stepsizes. 
The results in Theorem \ref{Theo_implementation_convergence_ST} are direct consequences of Theorem \ref{The_convergence_ST}, hence we omit the proof  for simplicity. 
\begin{theo}
	\label{Theo_implementation_convergence_ST}
	Suppose Assumption \ref{Assumption_implementation_f} holds. Moreover, we assume that 
	\begin{enumerate}
		\item The stepsizes $\eta$ and $\beta$ are replaced by $\eta_k $ and $\beta_k$ respectively in Algorithm \ref{Alg:ADAM};
		\item There exists constants $\tau_2 \geq 4\tau_1 > 0$ such that $\theta_k = \tau_1 \eta_k$ and $\beta_k = \tau_2 \eta_k$ hold for any $k\geq 0$. Moreover, the sequence $\{\eta_k\}$ satisfies $\sum_{k=0}^{\infty} \eta_k = \infty$ and $\lim_{k\to \infty} \eta_k \log(k) = 0$. 
		\item In Step 6 of Algorithm \ref{Alg:ADAM}, the sequence $\{\xk\}$ follows the update scheme
		\begin{equation*}
			\xkp = \xk - \eta_k (\vkp + \varepsilon )^{-\frac{1}{2}} \odot (  m_{k+1} +  \sigma \xk).
		\end{equation*}
	\end{enumerate}
	Then almost surely, any cluster point of $\{\xk\}$ in Algorithm \ref{Alg:ADAM} is a $\D_g$-stationary point of $g$, and the sequence $\{g(\xk)\}$ converges. 
\end{theo}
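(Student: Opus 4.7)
The plan is to verify that the hypotheses of Theorem \ref{The_convergence_ST} are met by Algorithm \ref{Alg:ADAM} with the stated single-timescale modifications, so that its conclusions apply verbatim. This closely parallels the argument in the proof of Theorem \ref{Theo_implementation_convergence_nondiminishing}, with Assumption \ref{Assumption_framework} replaced by Assumption \ref{Assumption_framework_ST}.

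First I would confirm Assumption \ref{Assumption_f}. The definability of every $f_i$ and $\D_{f_i}$ is preserved under averaging and convex hull, so both $f$ and $\D_f$ are definable; path-differentiability of $f$ then follows from the chain rule for conservative fields, and applying Proposition \ref{Prop_definable_regularity} to $g(x) = f(x) + \frac{\sigma}{2}\norm{x}^2$ yields the nonsmooth weak Sard property in Assumption \ref{Assumption_f}(2). Lower boundedness of $g$ is inherited from lower boundedness of $f$ plus the nonnegative regularizer.

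Next I would check the remaining items of Assumption \ref{Assumption_framework_ST}. Setting $d_k := \bb{E}[g_k \mid \ca{F}_k]$ and $\xi_{k+1} := g_k - d_k$, the uniform bound $\sup_{i,x}\norm{\D_{f_i}(x)} \leq M_f$ guarantees $d_k \in \D_f(\xk)$ (so Assumption \ref{Assumption_framework_ST}(5) holds with $\delta_k \equiv 0$) and that $\{\xi_k\}$ is a uniformly bounded martingale difference sequence, which is Assumption \ref{Assumption_framework_ST}(6). The scheme $\vkp = (1-\beta_k)\vk + \beta_k g_k^2$ fits Assumption \ref{Assumption_framework_ST}(2) with the locally bounded choice $W(g,m) = g^2$, since $\beta_k = \tau_2 \eta_k$. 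The modified Step 6 is precisely the framework update with $H(v) = (\max\{v,0\}+\varepsilon)^{-1/2}$, and because $v_0 \in \Rn_+$ and each update preserves nonnegativity, the $\max$ is redundant, giving Assumption \ref{Assumption_framework_ST}(3). The stepsize conditions in Assumption \ref{Assumption_framework_ST}(4), including the ratio constraint between $\tau_1$ and $\tau_2$, are assumed explicitly in the theorem hypotheses.

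The main obstacle is the almost-sure uniform boundedness of $\{\xk\}$ required by Assumption \ref{Assumption_framework_ST}(1), which is not immediate because $\{\eta_k\}$ now diminishes rather than being bounded below. My plan here is to invoke Proposition \ref{Prop_UB} together with the remark following it, which explicitly extends the induction argument to diminishing $\{\eta_k\}$; the growth condition $\norm{\D_f(x)} \leq L(1+\norm{x}^\nu)$ holds with $\nu = 0$ and $L = M_f$ by Assumption \ref{Assumption_implementation_f}(2). The residual requirement that $\eta_k$ lie below the preconditioner threshold $2/(\sigma M_v) = 2\varepsilon/\sigma$ is automatic for large $k$ since $\eta_k \log k \to 0$ forces $\eta_k \to 0$; a tail-truncation argument then transfers the uniform bound to all iterates almost surely. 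With every item of Assumption \ref{Assumption_framework_ST} verified, Theorem \ref{The_convergence_ST} applies directly and yields both the $\D_g$-stationarity of every cluster point of $\{\xk\}$ and the convergence of $\{g(\xk)\}$, completing the argument.
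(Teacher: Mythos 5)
Your proposal is correct and takes exactly the route the paper intends: the paper omits this proof, remarking only that the result is a direct consequence of Theorem \ref{The_convergence_ST}, and your item-by-item verification of Assumptions \ref{Assumption_f} and \ref{Assumption_framework_ST} (with $W(g,m)=g^2$, $\delta_k\equiv 0$, and Proposition \ref{Prop_UB} plus its remark handling the boundedness of $\{\xk\}$ under diminishing stepsizes) supplies precisely the checks the paper leaves implicit. One caveat you glossed over: the theorem hypothesis reads $\tau_2\geq 4\tau_1$ while Assumption \ref{Assumption_framework_ST}(4) requires $\tau_1\in[\tfrac{\tau_2}{4},\infty)$, i.e.\ $\tau_2\leq 4\tau_1$, so the two conditions are compatible only when $\tau_2=4\tau_1$; this is evidently a sign error in the paper's statement (the Lyapunov decrease in Proposition \ref{Prop_Lyapunov_ST} needs $\tfrac{\tau_2}{4\tau_1}\leq 1$), but your claim that the ratio constraint ``is assumed explicitly in the theorem hypotheses'' is not literally true as written and should be flagged rather than asserted.
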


\section{Numerical Experiments}
In this section, we conduct numerical experiments to demonstrate the effectiveness of AdamD in the context of image classification and language modeling tasks. We compare AdamD with the most popular adaptive algorithms used for training neural networks, i.e. Adam and AdamW. All experiments are conducted using an NVIDIA RTX 3090 GPU and were implemented in Python 3.9 with PyTorch 1.12.0.

\subsection{Implementations of AdamD}

In our numerical experiments, we focus on two key tasks: image classification employing Convolutional Neural Networks (CNNs) and language modeling using Long Short-Term Memory (LSTM) networks \cite{hochreiter1997long}. Specifically, our image classification experiments include the deployment of well-established architectures, namely  Resnet34 \cite{he2016deep} and Densenet121 \cite{huang2018condensenet}, to train the CIFAR-10 and CIFAR-100 datasets \cite{krizhevsky2009learning}. Our language modeling experiments focus on LSTM networks applied to the Penn Treebank dataset \cite{marcus1993building}. It is worth noting that AdamW typically demonstrates superior generalization performance when used to train CNNs for image classification tasks. For training LSTMs, prior studies such as \cite{ding2023nonconvex,loshchilov2017decoupled,zhuang2020adabelief} have observed that Adam exhibits better generalization capacity than AdamW.


\subsubsection{CNNs on image classification}\label{subsubsec:CNN experiments}

In all our experiments on image classification, we train the models consistently for 200 epochs, employing a batch size of 128. At the 150th epoch, we reduce the step size by a factor of 0.1. This step size reduction schedule is a prevalent practice in contemporary deep neural network training.  It is helpful to accelerate the convergence of the optimization algorithm, and to enhance generalization capacity. Similar strategies can be observed in previous works, such as \cite{he2016deep,zhuang2020adabelief}. The weight decay parameter $\sigma$ is fixed to be $5\times10^{-3}$. We use the following hyperparameters setting for tested algorithms:
\begin{itemize}
	\item Adam/AdamW: We search the stepszie $\eta$ within the range of $\{5\times10^{-4},10^{-3},5\times10^{-3},10^{-2},5\times10^{-2},10^{-1},5\times10^{-1},1\}$. Additionally, we set $\varepsilon=10^{-8}$, $\theta_k=10^{-1}$ and $\rho_k=10^{-4}$ as the default setting in Pytorch.
	\item AdamD: We adopt the searching scheme for stepsize as $0.1\times \{5\times10^{-4},10^{-3},5\times10^{-3},10^{-2},5\times10^{-2},10^{-1},5\times10^{-1},1\}$. We set $\theta_s=\frac{\theta_0}{(\log (s+2))^{\frac{3}{2}}}$, with $s$ representing the epoch number. Within the $s$-th epoch, $\theta_k$ {takes the constant value}
	$\theta_s$. Under this setting, we can easily verify that $\theta_k=o(\frac{1}{\log k})$. Here, we set the initial momentum parameter to $\theta_0=10^{-1}$, the second moment parameter to $\beta=10^{-4}$ and the regularization parameter to $\varepsilon=10^{-8}$,
	{which are the same} as the default settings in PyTorch  for Adam/AdamW. 
\end{itemize}

\begin{figure*}[th]
	\begin{center}
		\setlength{\tabcolsep}{0.0pt}  
		\scalebox{1}{\begin{tabular}{cccc}
				\includegraphics[width=0.25\linewidth]{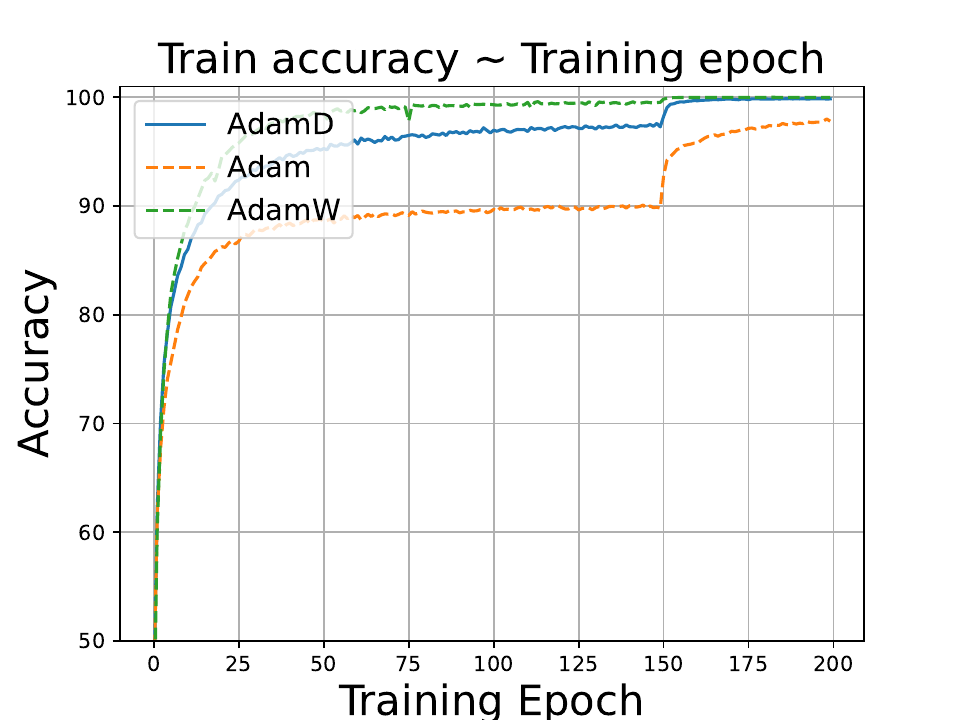}&
				\includegraphics[width=0.25\linewidth]{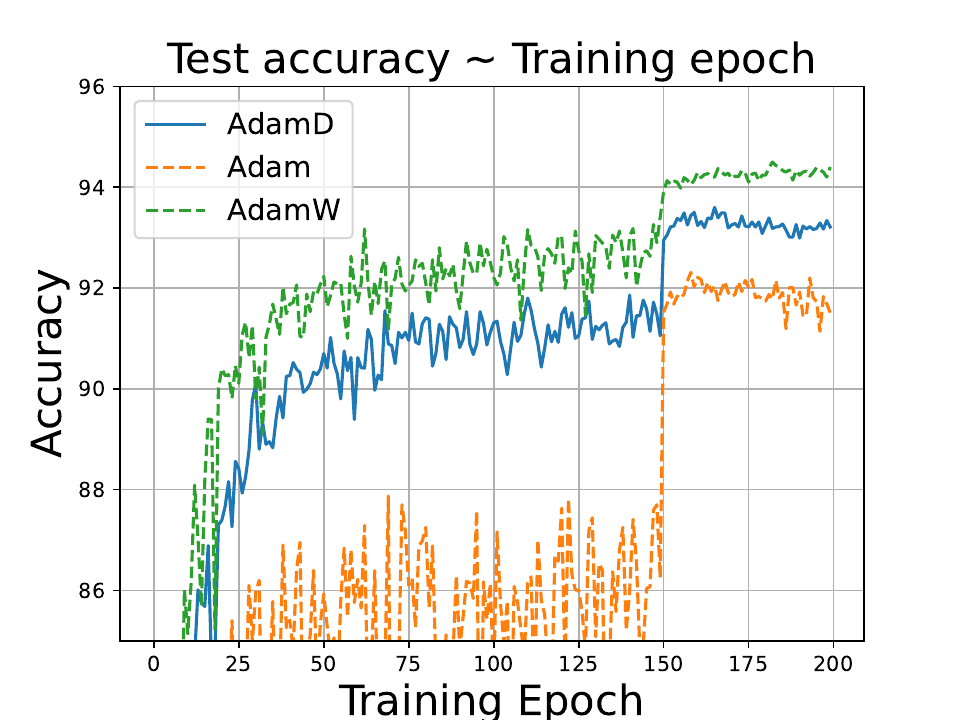}&
				\includegraphics[width=0.25\linewidth]{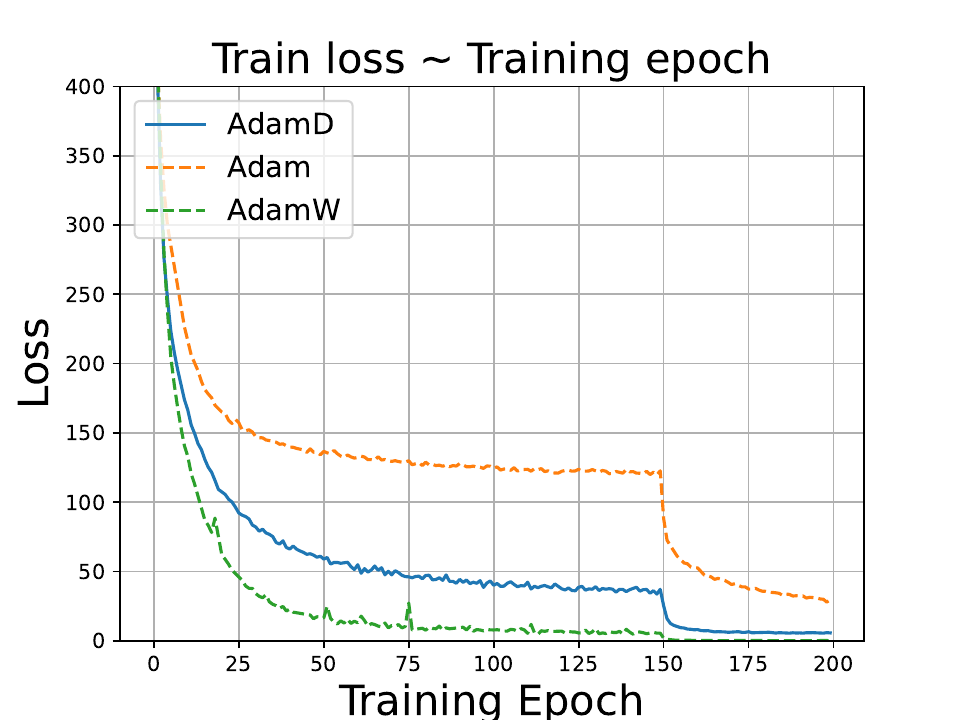} &
				\includegraphics[width=0.25\linewidth]{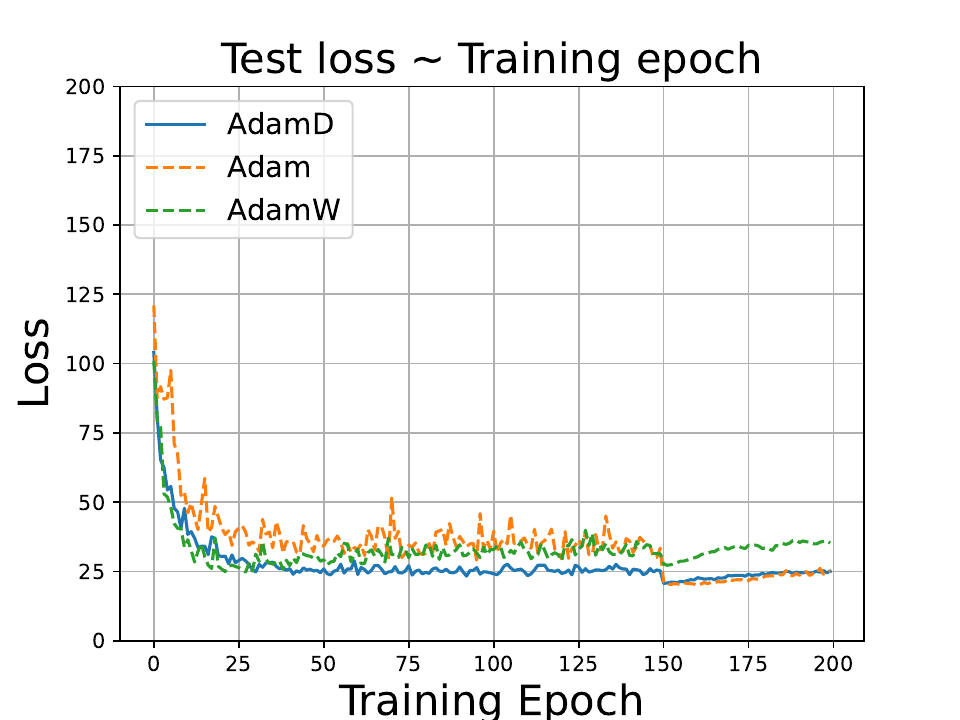}\\
				\multicolumn{1}{c}{\footnotesize{(a) Train accuracy}} &  \multicolumn{1}{c}{\footnotesize{(b) Test accuracy}}&
				\multicolumn{1}{c}{\footnotesize{(c) Train loss}}&
				\multicolumn{1}{c}{\footnotesize{(d) Test loss}}                  
		\end{tabular}}
	\end{center}
	\caption{ResNet34 on CIFAR10 dataset. Stepsize is reduced to 0.1 times of the original value at the 150th epoch.} \label{fig:resnet-cifar10}
\end{figure*}

\begin{figure*}[th]
	\begin{center}
		\setlength{\tabcolsep}{0.0pt}  
		\scalebox{1}{\begin{tabular}{cccc}
				\includegraphics[width=0.25\linewidth]{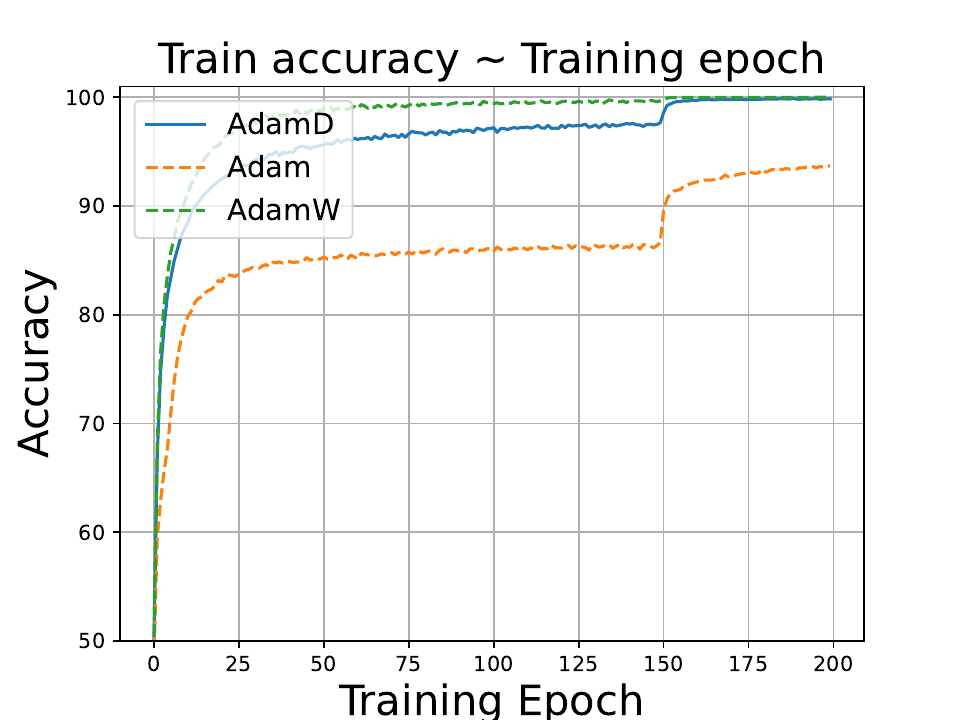}&
				\includegraphics[width=0.25\linewidth]{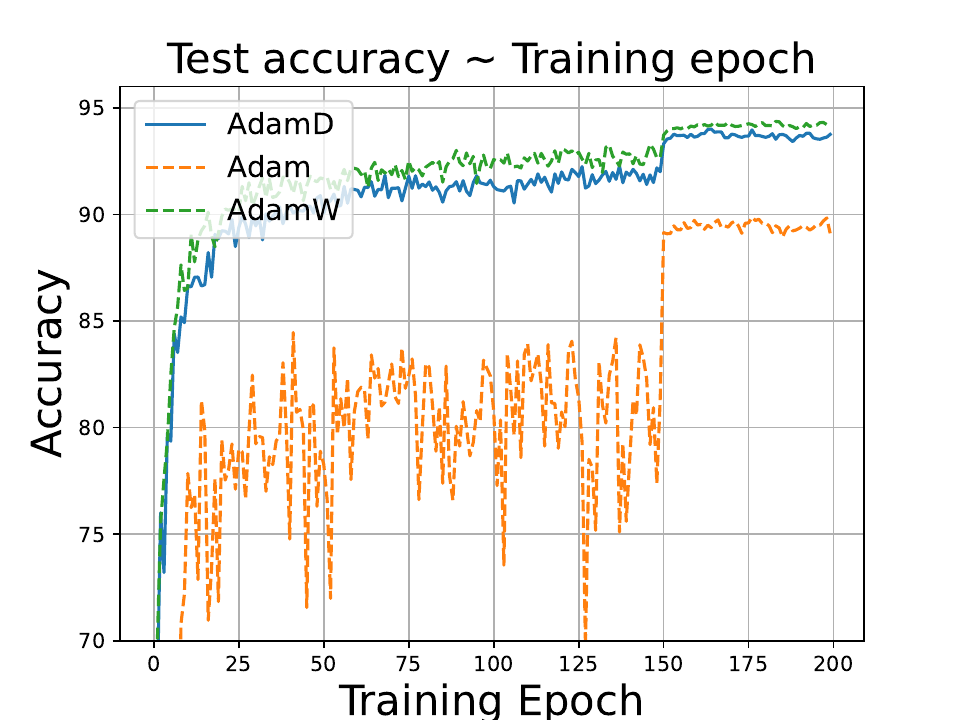}&
				\includegraphics[width=0.25\linewidth]{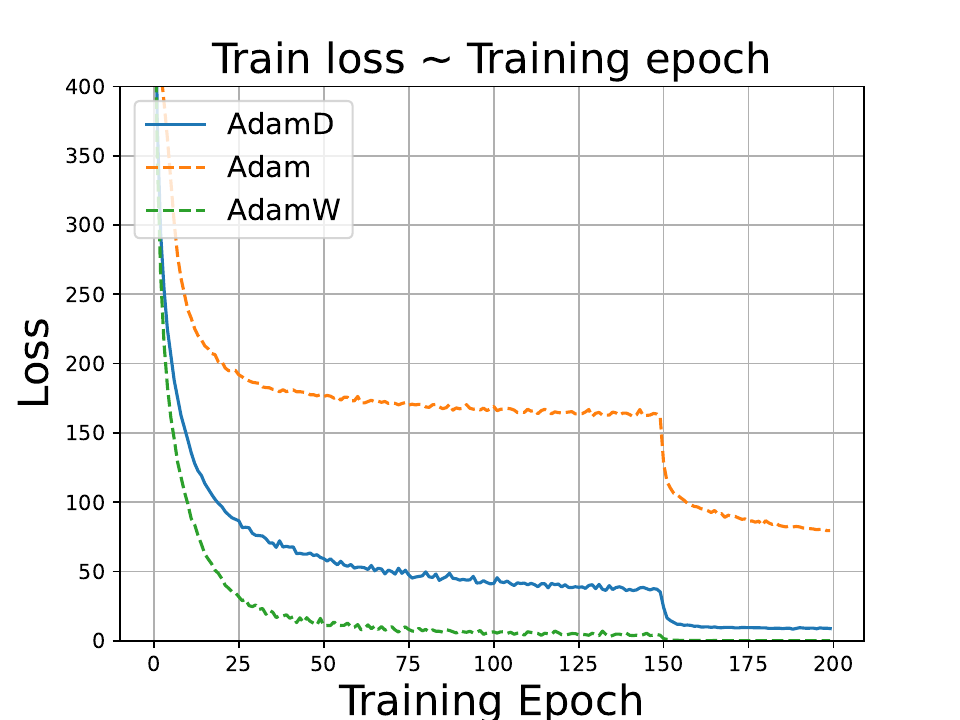} &
				\includegraphics[width=0.25\linewidth]{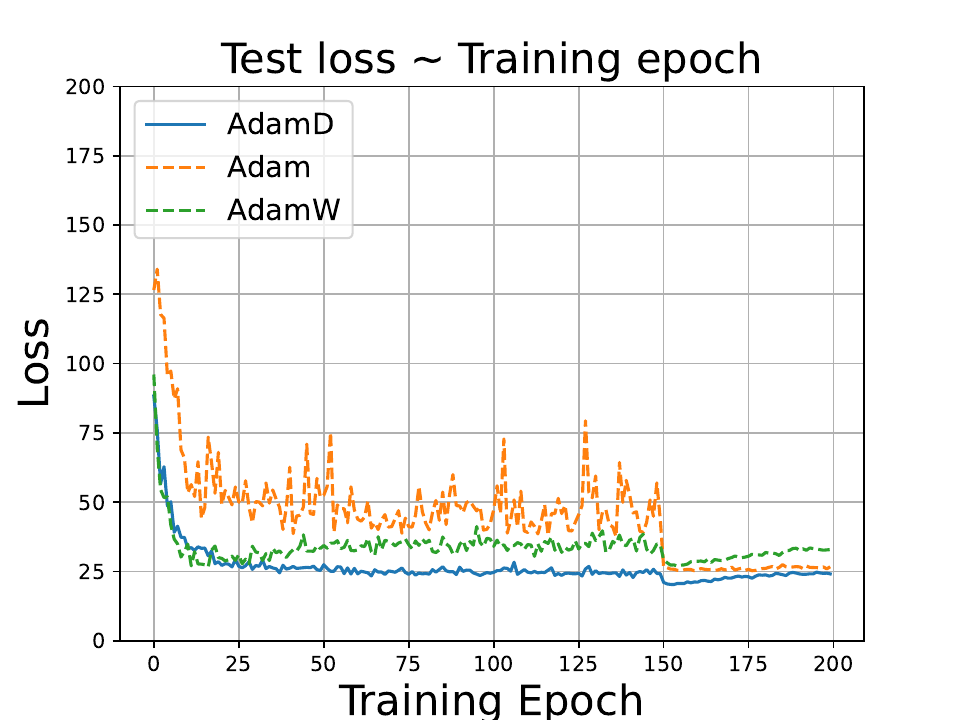}\\
				\multicolumn{1}{c}{\footnotesize{(a) Train accuracy}} &  \multicolumn{1}{c}{\footnotesize{(b) Test accuracy}}&
				\multicolumn{1}{c}{\footnotesize{(c) Train loss}}&
				\multicolumn{1}{c}{\footnotesize{(d) Test loss}}                  
		\end{tabular}}
	\end{center}
	\caption{DenseNet121 on CIFAR10 dataset. Stepsize is reduced to 0.1 times of the original value at the 150th epoch.} \label{fig:densenet-cifar10}
\end{figure*}

\begin{figure*}[th]
	\begin{center}
		\setlength{\tabcolsep}{0.0pt}  
		\scalebox{1}{\begin{tabular}{cccc}
				\includegraphics[width=0.25\linewidth]{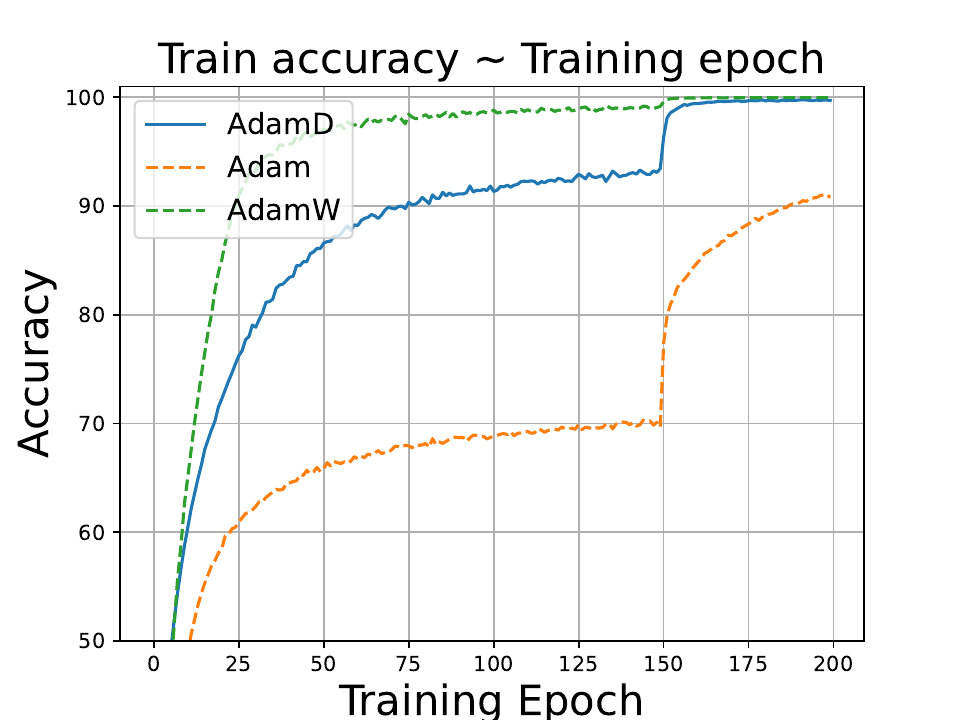}&
				\includegraphics[width=0.25\linewidth]{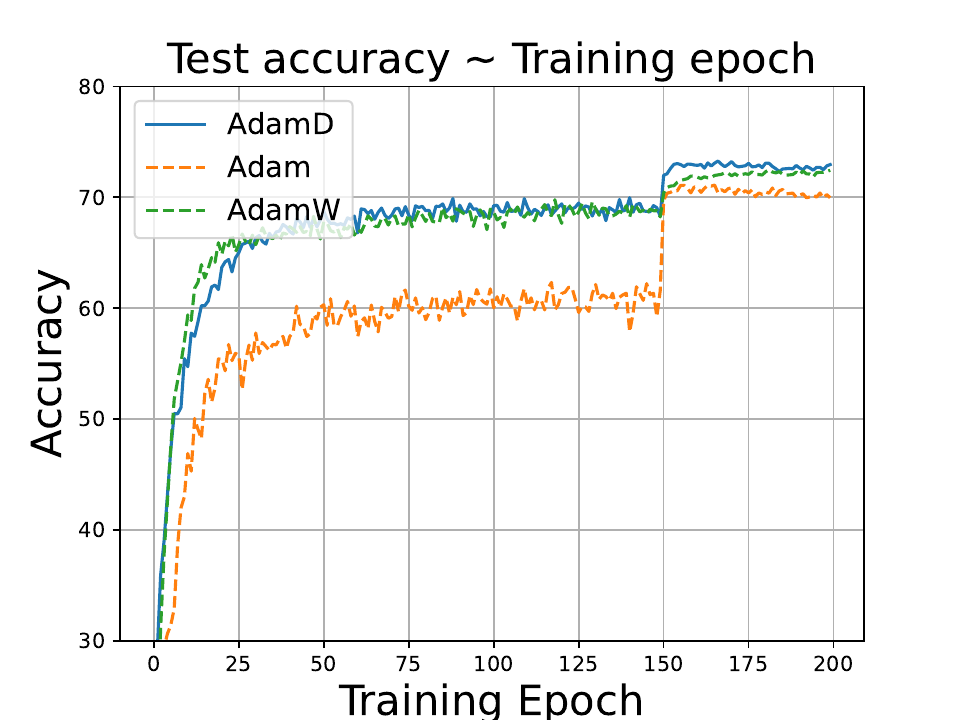}&
				\includegraphics[width=0.25\linewidth]{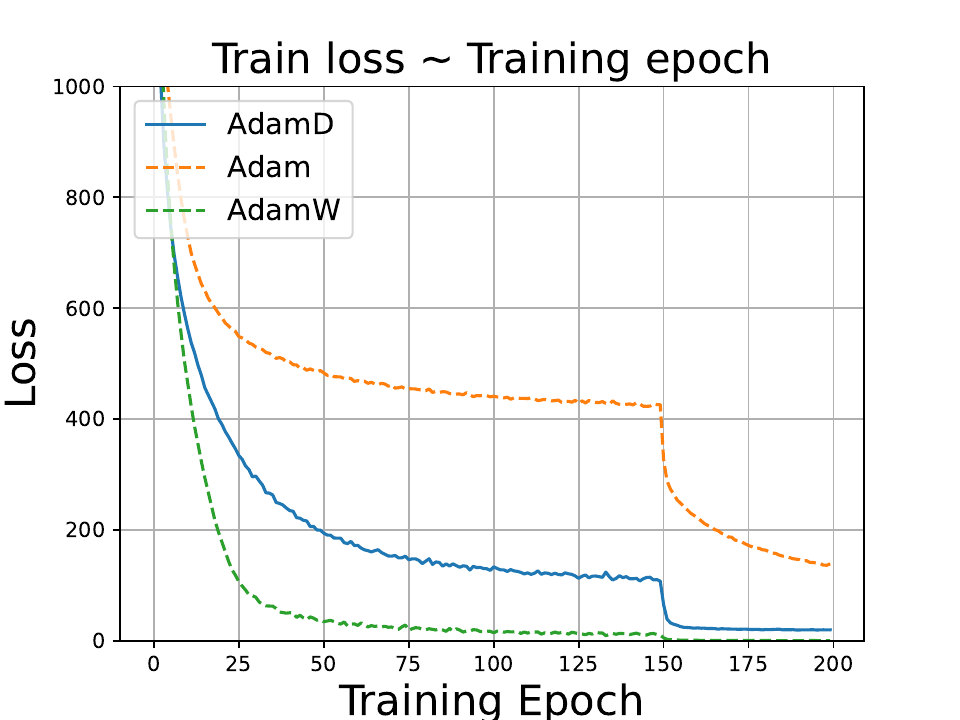} &
				\includegraphics[width=0.25\linewidth]{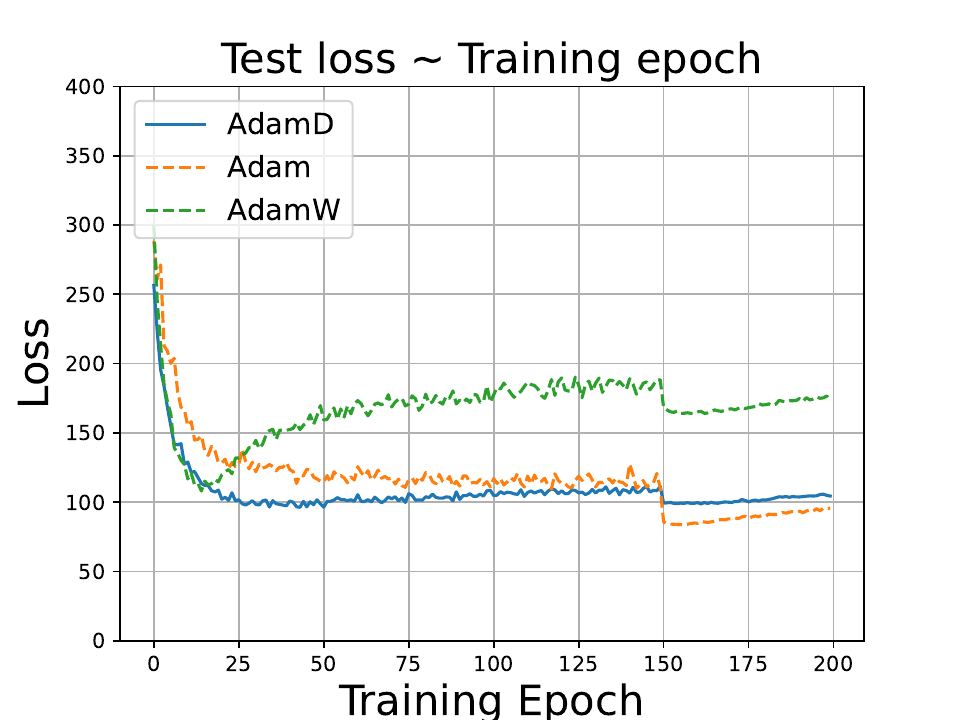}\\
				\multicolumn{1}{c}{\footnotesize{(a) Train accuracy}} &  \multicolumn{1}{c}{\footnotesize{(b) Test accuracy}}&
				\multicolumn{1}{c}{\footnotesize{(c) Train loss}}&
				\multicolumn{1}{c}{\footnotesize{(d) Test loss}}                  
		\end{tabular}}
	\end{center}
	\caption{ResNet34 on CIFAR100 dataset. Stepsize is reduced to 0.1 times of the original value at the 150th epoch.} \label{fig:resnet-cifar100}
\end{figure*}

\begin{figure*}[th]
	\begin{center}
		\setlength{\tabcolsep}{0.0pt}  
		\scalebox{1}{\begin{tabular}{cccc}
				\includegraphics[width=0.25\linewidth]{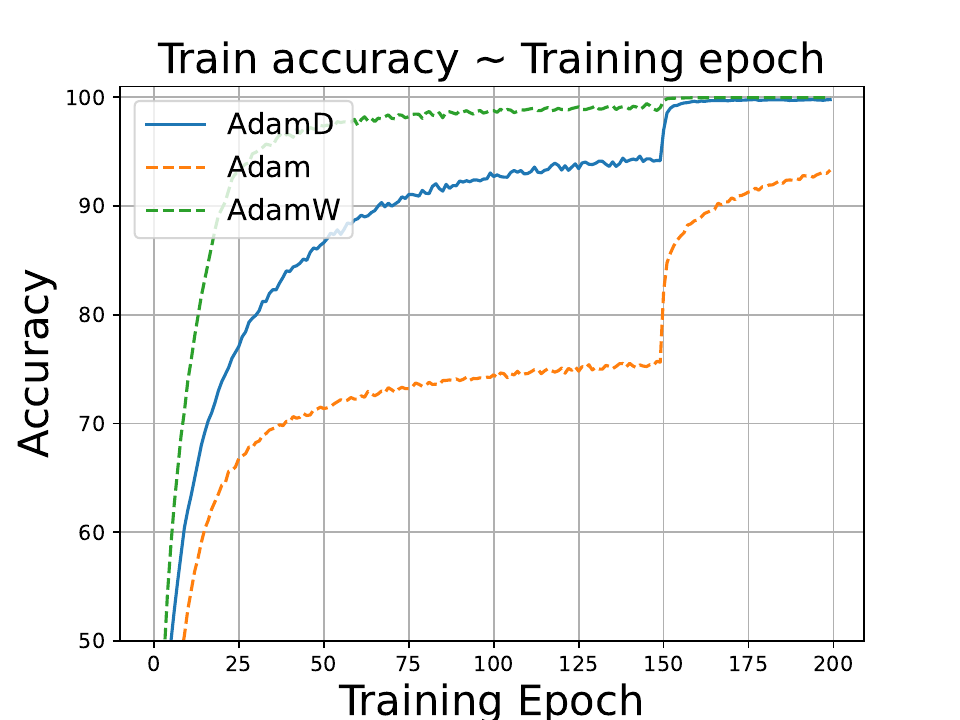}&
				\includegraphics[width=0.25\linewidth]{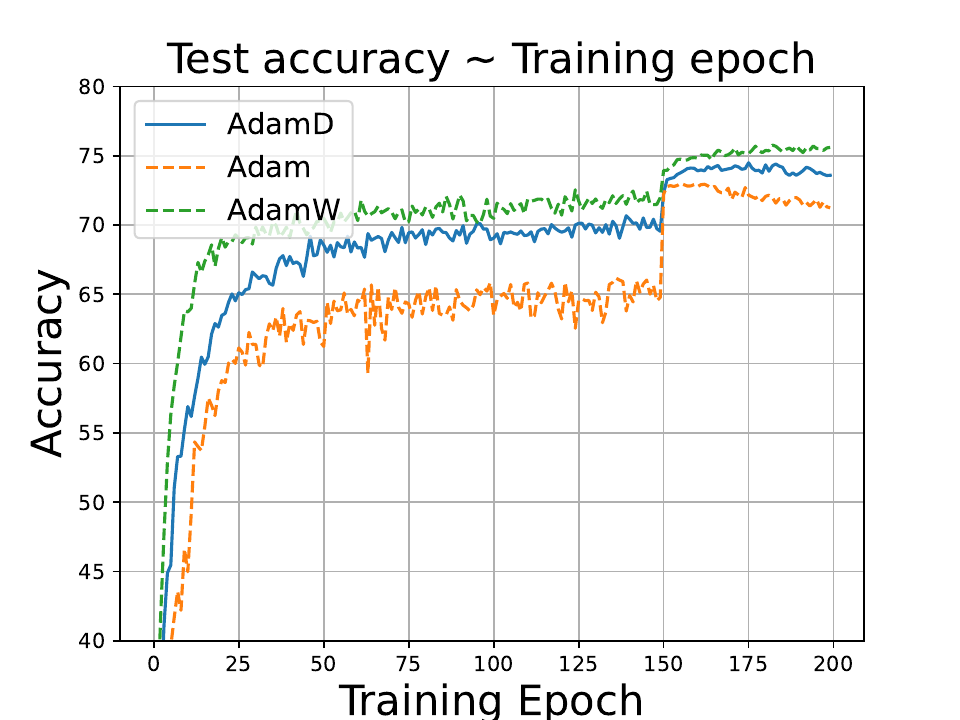}&
				\includegraphics[width=0.25\linewidth]{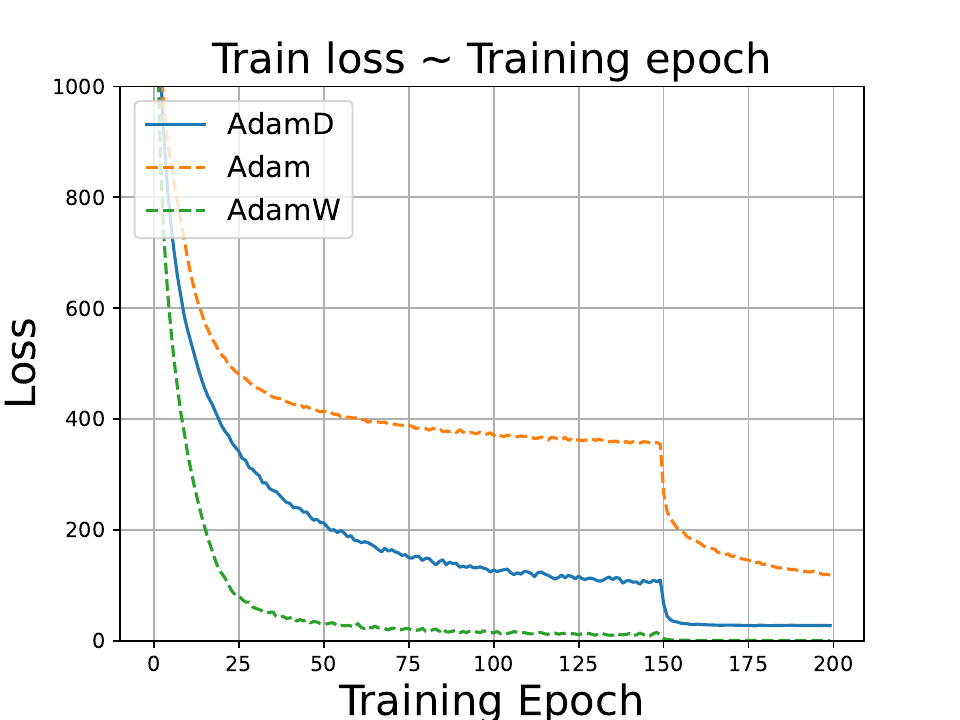} &
				\includegraphics[width=0.25\linewidth]{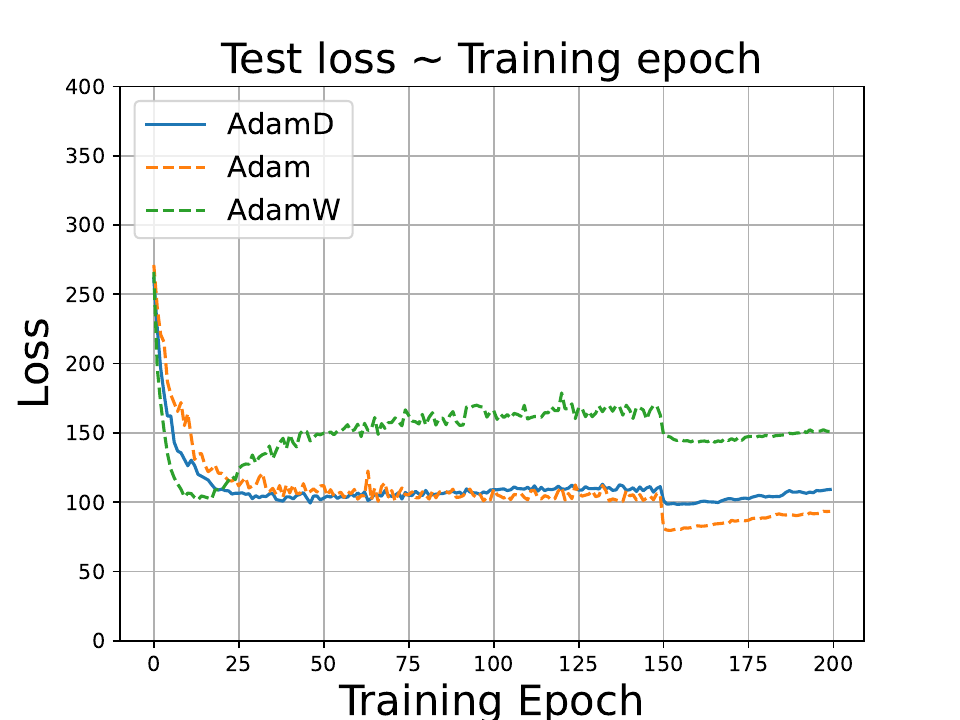}\\
				\multicolumn{1}{c}{\footnotesize{(a) Train accuracy}} &  \multicolumn{1}{c}{\footnotesize{(b) Test accuracy}}&
				\multicolumn{1}{c}{\footnotesize{(c) Train loss}}&
				\multicolumn{1}{c}{\footnotesize{(d) Test loss}}                  
		\end{tabular}}
	\end{center}
	\caption{DenseNet121 on CIFAR100 dataset. Stepsize is reduced to 0.1 times of the original value at the 150th epoch.} \label{fig:densenet-cifar100}
\end{figure*}


\begin{figure*}[th]
	\begin{center}
		\setlength{\tabcolsep}{0.0pt}  
		\scalebox{1}{\begin{tabular}{cccc}
				
				\includegraphics[width=0.33\linewidth]{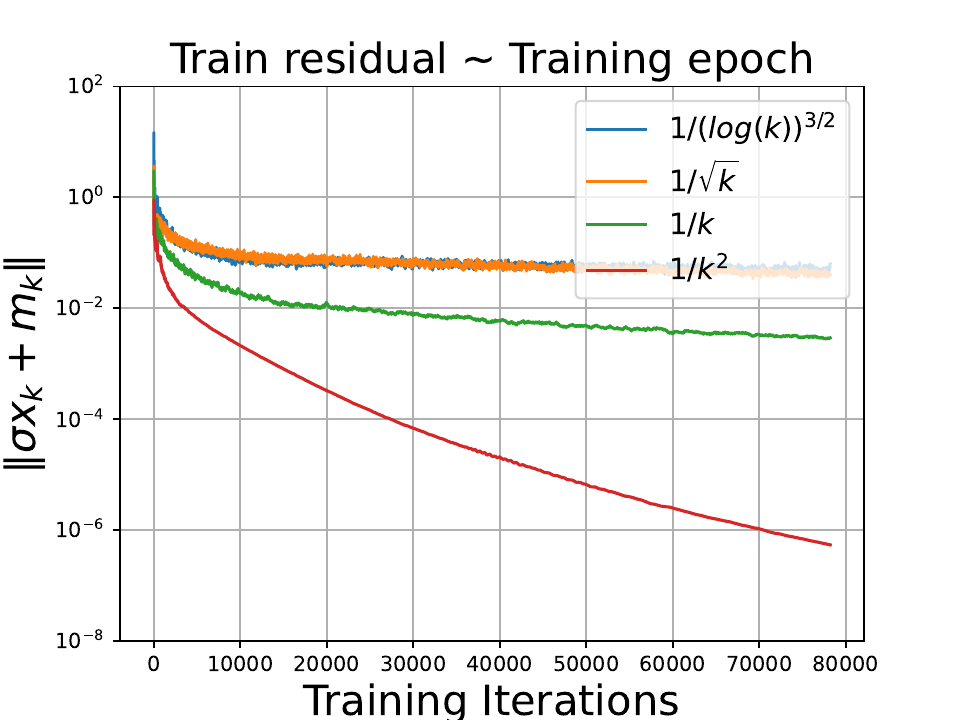}&
				\includegraphics[width=0.33\linewidth]{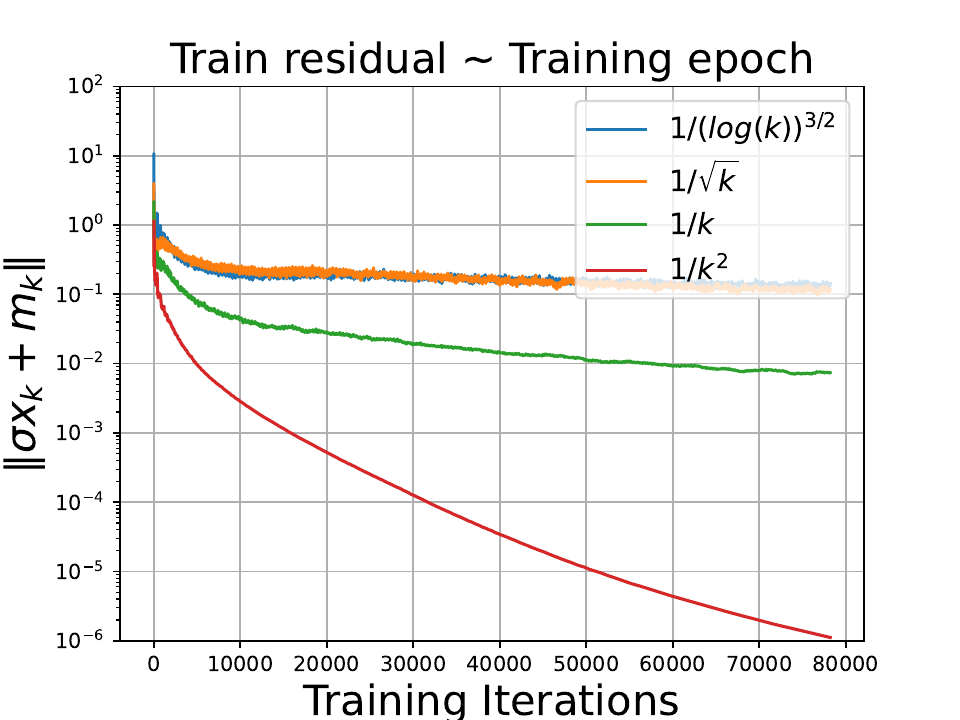} &
				\\
				\multicolumn{1}{c}{\footnotesize{(a) ResNet34 on CIFAR10}}&
				\multicolumn{1}{c}{\footnotesize{(b) ResNet34 on CIFAR100}}                  
		\end{tabular}}
	\end{center}
	\caption{{$\|m_k+\sigma x_k\|$ under different {decay} rates of $\{\theta_k\}$. The stepsizes for updating $\{x_k\}$ are fixed.}} \label{fig:residual}
\end{figure*}

\begin{figure*}[th]
	\begin{center}
		\setlength{\tabcolsep}{0.0pt}  
		\scalebox{1}{\begin{tabular}{ccc}
				\includegraphics[width=0.33\linewidth]{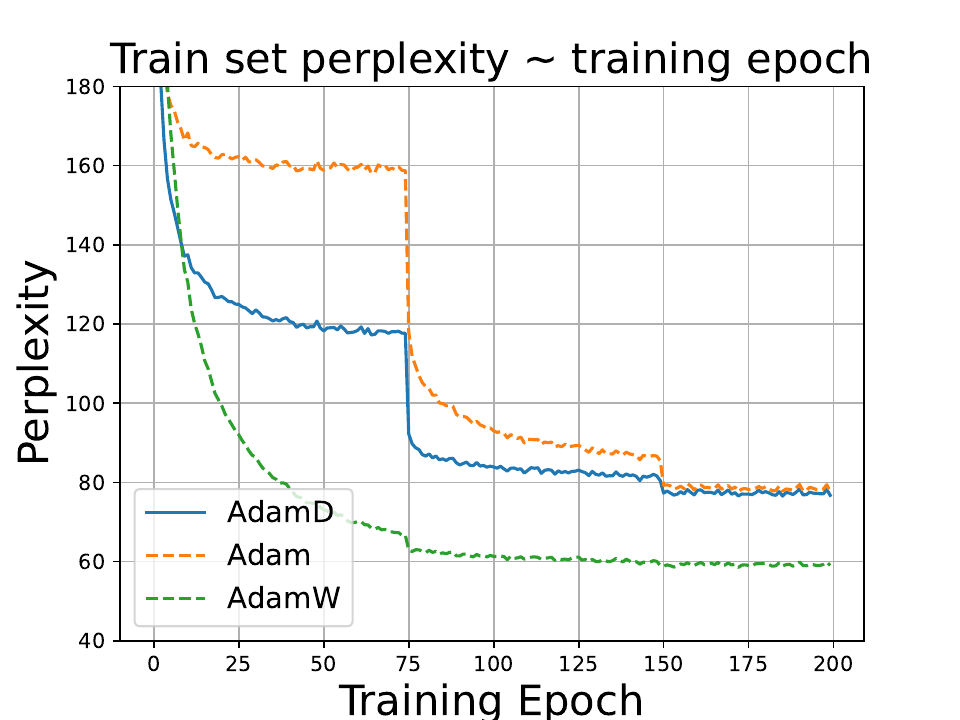}&
				\includegraphics[width=0.33\linewidth]{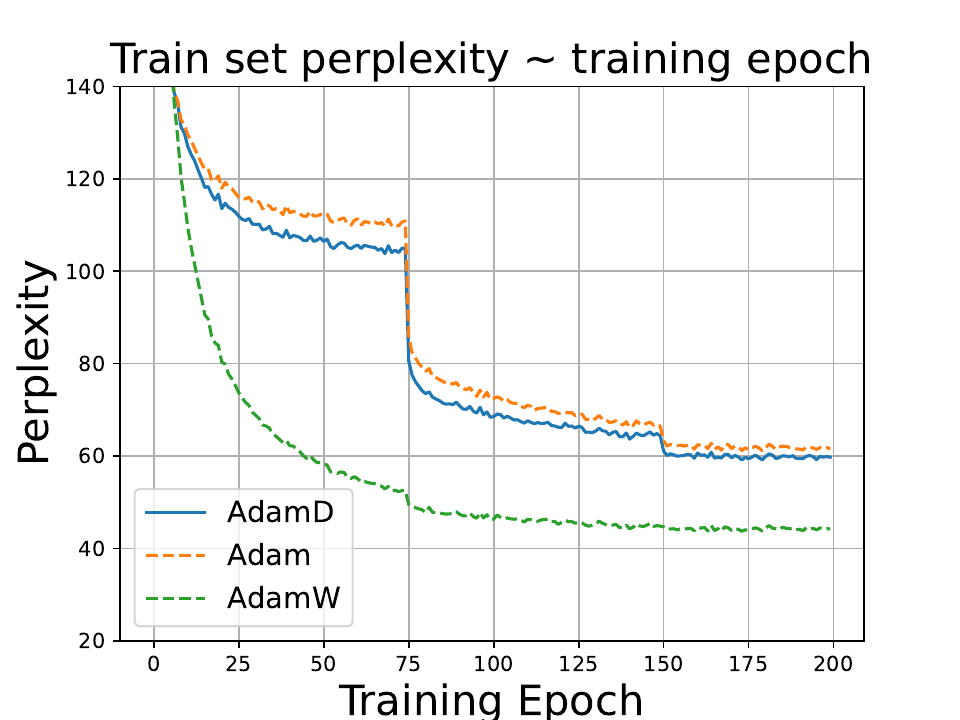}&
				\includegraphics[width=0.33\linewidth]{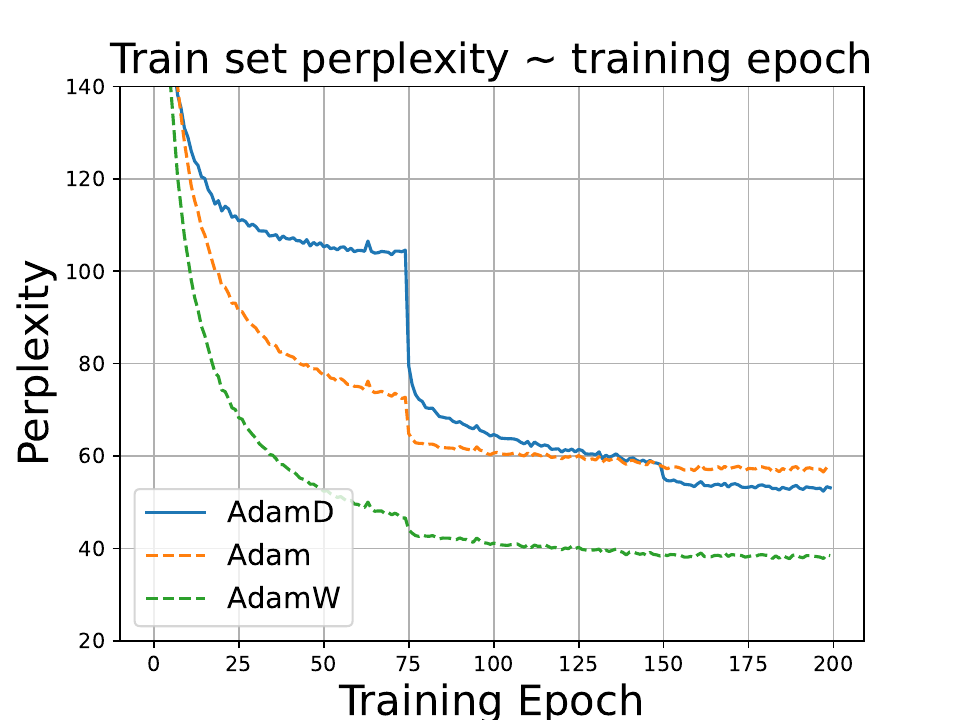}\\
				
				\includegraphics[width=0.33\linewidth]{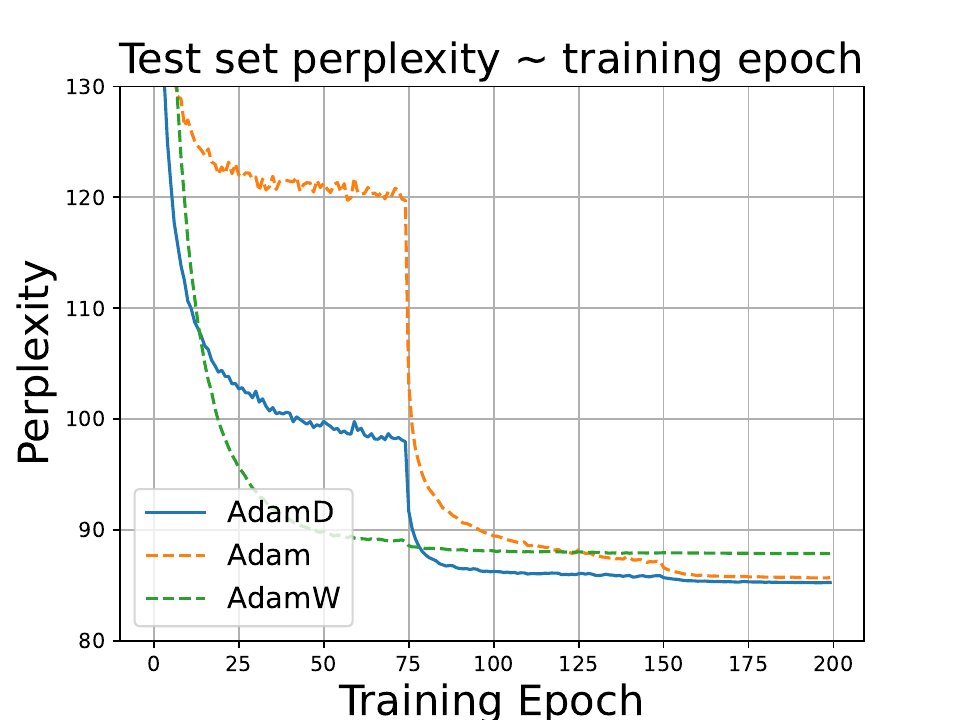}&
				\includegraphics[width=0.33\linewidth]{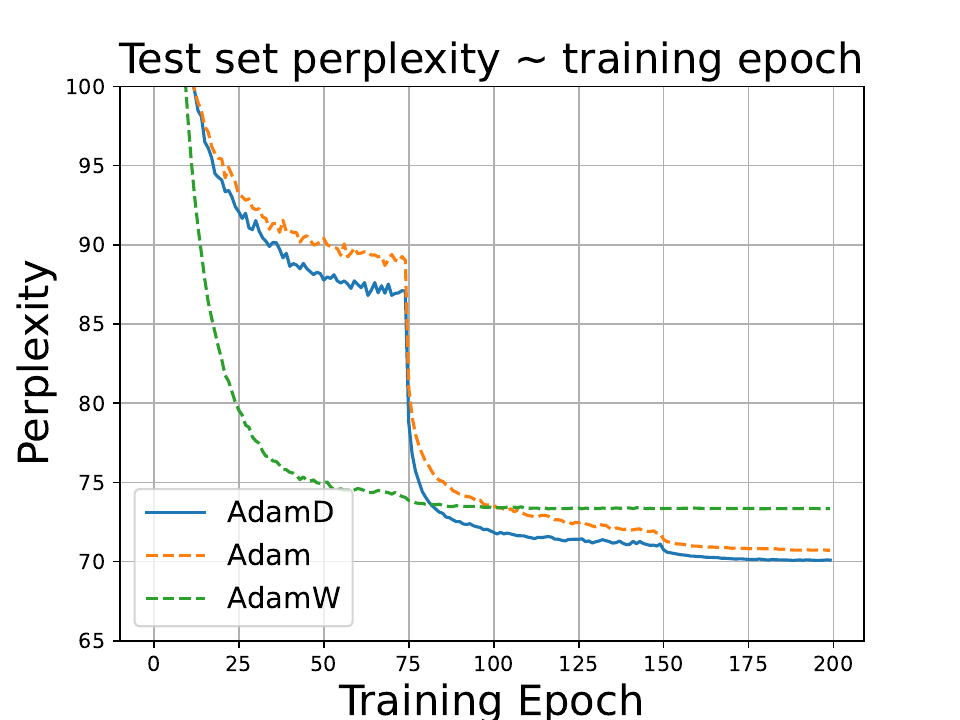}&
				\includegraphics[width=0.33\linewidth]{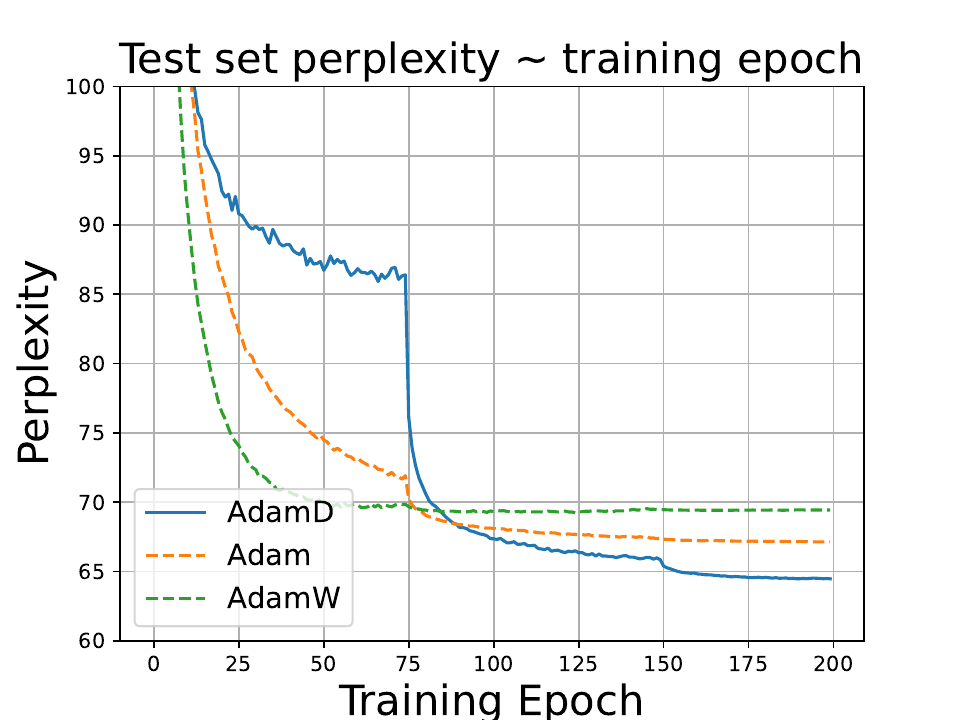}\\
				\footnotesize{(a) 1-layer LSTM} &  \footnotesize{(b) 2-layer LSTM}
				&  \footnotesize{(c) 3-layer LSTM}
				\\
		\end{tabular}}
	\end{center}
	\caption{Training and test perplexity (lower is better) of LSTMs on Penn Treebank dataset with stepsize reduced to 0.1 times of the original value at the 75th epoch and 150th epoch.} \label{fig:lstm}
\end{figure*}

In Step 6 of Algorithm \ref{Alg:ADAM}, the coefficient associated with $x_k$ is expressed as $1-\eta\sigma(\sqrt{v_{k+1}}+\varepsilon)^{-1}$. It is worth noting that as training progresses, the value of $\sqrt{v_{k+1}}+\varepsilon$ tends to become small. To ensure that the coefficient does not become excessively small, in practice, AdamD employs a smaller step size compared to Adam and AdamW. This phenomenon of selecting a smaller scale stepsize also occurs in other optimizers, such as Lion \cite{chen2023symbolic}. The numerical results, as illustrated in Figure \ref{fig:densenet-cifar100}, reveal compelling insights. Both AdamD and AdamW consistently achieve 100\% training accuracy, whereas Adam falls short in this regard. From the training loss plots, we observe that the convergence speed of AdamD falls between that of AdamW and Adam. In most instances, AdamD achieves nearly the same level of generalization as AdamW. Moreover, the generalization capacity of Adam is notably inferior to that of the other two algorithms. This observation underscores the necessity of weight decoupling when solving the quadratically regularized problem defined in \eqref{Prob_Ori}.

To verify the results in Lemma \ref{Le_xk_yk_close}, we also present a plot of $\|x_k+\sigma m_k\|$ as shown in Figure \ref{fig:residual}. When $\theta_k$ adheres to a decay schedule described by $\mathcal{O}\left(k^{-\gamma}\right)$, \eqref{xk_yk_est} and basic calculus imply that $\|\sigma x_k+m_k\|$ exhibits an asymptotic behavior of $\mathcal{O}\left(k^{-\gamma}\right)$. The results in Figure \ref{fig:residual} are consistent with our theoretical analysis that $\{\norm{m_k+\sigma x_k}\}$ converges to 0, or equivalently $\{\norm{x_k-y_k}\}$ converges to 0. Notably, larger values of $\gamma$ correspond to a more rapid decline in $\|\sigma x_k+m_k\|$.

\subsubsection{LSTMs on language modeling}
In all our language modeling experiments, we consistently train our models for 200 epochs while employing a batch size of 128. Additionally, we adopt a stepsize reduction strategy that decreases the stepsize to 0.1 times its original value twice during training, specifically at the 75th and 150th epochs. These settings adhere to the commonly used experimental setup for training LSTMs, as demonstrated in previous works \cite{chen2021closing,zhuang2020adabelief}. This stepsize reduction strategy serves to accelerate the convergence of the optimization algorithm, simultaneously enhancing its generalization capacity. The weight decay parameter $\sigma$ {is fixed at} $1\times10^{-5}$ throughout these experiments. Our choice of hyperparameter settings aligns with those in Section \ref{subsubsec:CNN experiments}. The numerical results are displayed in Figure \ref{fig:lstm}.

From Figure \ref{fig:lstm}, we can observe that  both AdamD and Adam exhibit superior generalization capacity compared to AdamW. For 1- and 2-layer LSTM, AdamD exhibits similar generalization capacity compared to Adam. In the case of larger 3-layer LSTM models, AdamD outperforms Adam, achieving a test perplexity which is at least 2 units lower.

\subsection{Further Discussions on the AdamD Method}

\subsubsection{Asymptotic approximation to SGD sequence helps generalization}

As demonstrated in Lemma \ref{Le_xk_yk_close}, the term $\norm{ \sigma \xk +\mk}$  converges to $0$ as $k$ tends to infinity.  Then as discussed in Lemma \ref{Le_approx_dk}, the sequence $\{\yk\}$ (defined by $\yk := -\sigma^{-1} \mk$) approximately follows the update scheme \eqref{Eq_update_yk}, which asymptotically approximates a SGD method. 
Together with the fact that $\lim_{k\to \infty} \norm{\xk - \yk} = 0$, we can conclude  that the sequence $\{\xk\}$ in AdamD method is controlled by an SGD sequence $\{\yk\}$ as $k$ goes to infinity. Moreover, the interpolated process of $\{\yk\}$ is a perturbed solution of the differential inclusion \eqref{Eq_DI_SGD_yk}, i.e.,
\begin{equation}
	\label{Eq_Discussion_DI_SGD}
	\frac{\mathrm{d}y}{\mathrm{d}t} \in - (\D_f(y) + \sigma y). 
\end{equation}

On the other hand, in the early stage of the iterations of the AdamD method, the term $\norm{\sigma \xk + \mk}$ is large, and the ratio of $\theta_k$ and $\eta_k$ usually remains nearly unchanged. Then as illustrated in the discussion in Section 5, the sequence $\{(\xk, \mk, \vk)\}$ jointly tracks the trajectories of the differential inclusion 
\begin{equation}
	\label{Eq_Discussion_DI_Adam}
	\left(\frac{\mathrm{d}x}{\mathrm{d}t}, \frac{\mathrm{d}m}{\mathrm{d}t}, \frac{\mathrm{d}v}{\mathrm{d}t}\right)
	\in 
	-\left[\begin{matrix}
		(\ca{P}_+(v)  + \varepsilon )^{-\frac{1}{2}} \odot \left(   m + \sigma x \right)\\
		\tau_1 m - \tau_1 \D_f(x)\\
		\tau_2 v - \tau_2  \ca{U}(x)  \\
	\end{matrix}\right].
\end{equation}
Here $\ca{U}(x) := \frac{1}{N}\sum_{i = 1}^N \{d\odot d: d \in \D_{f_i}(x)\}$. Similar results are also exhibited in \cite{bianchi2022convergence,xiao2023adam}.   As the differential inclusion \eqref{Eq_Discussion_DI_Adam} imposes {preconditioners} to the update directions of $\{\xk\}$ based on the second-order moments of the stochastic subgradients, the sequence could quickly converge to a neighborhood of the stationary points.

These theoretical properties explain the fast convergence of the AdamD method in the early stage of the training and its lower generalization error than the Adam method with coupled weight decay. Based on the numerical experiments and our theoretical analysis, we believe the ability of asymptotically tracking an SGD sequence in AdamD helps to explain its superior generalization performance {over the} Adam method.

\subsubsection{Decoupled weight decay {is equivalent to} quadratic regularization}

It is conjectured in \cite{loshchilov2017decoupled} that the quadratic regularization terms contribute to the low generalization error in training neural networks. Finally, the authors in \cite{loshchilov2017decoupled} develop the AdamW method, showing that the weight decay is not equivalent to the quadratic regularization. As a result, the term $\sigma \xk$ in AdamW is not scaled by the preconditioner $(\sqrt{\vkp} + \varepsilon)^{-1}$. Therefore, the AdamW method does not have a clear objective function and lacks convergence guarantees in training nonsmooth neural networks. 

In our AdamD method, the objective function is exactly the $g(x)$ in \eqref{Prob_Ori}. Hence the weight decay parameter $\sigma$ is exactly the penalty parameter for the quadratic penalty term $\frac{\sigma}{2} \norm{x}^2$ in \eqref{Prob_Ori}. More importantly, we provide theoretical guarantees for the AdamD method in training nonsmooth neural networks. The stationarity of the iterates $\{\xk\}$ is characterized by $\D_f(\xk) + \sigma \xk$, hence has clearer meaning when compared with AdamW.

Furthermore, our numerical experiments demonstrate the superior performance of the AdamD method, illustrating that employing the quadratic regularization term in \eqref{Prob_Ori} does not undermine the generalization error.  Based on these results, we can conclude that, within our framework \eqref{Eq_Framework}, the weight decay can be interpreted as the quadratic regularization, which is different from the demonstrations in \cite{loshchilov2017decoupled} regarding AdamW.

\section{Conclusion}


In this paper, motivated by the AdamW method, we propose a novel framework \eqref{Eq_Framework} for Adam-family methods with decoupled weight decay. We prove that under mild assumptions with non-diminishing stepsizes $\{\eta_k\}$, any cluster point of $\{\xk\}$ is a $\D_g$-stationary point of \eqref{Prob_Ori}. Compared with the AdamW method, our proposed framework \eqref{Eq_Framework} enjoys convergence guarantees in training nonsmooth neural networks, and yields solutions that have clearer meanings. More importantly, we prove that the decoupled weight decay drives $\{\xk\}$ in the framework \eqref{Eq_Framework} to asymptotically approximate the SGD method. This fact provides {an intuitive}  understanding of the role of decoupled weight decay in Adam-family methods and explains the superior generalization performance of the Adam method with decoupled weight decay. 

As an application of our proposed framework \eqref{Eq_Framework}, we develop a novel Adam-family method named Adam with decoupled weight decay (AdamD), and prove its convergence properties under mild conditions. Numerical experiments on image classification and language modeling demonstrate the effectiveness of our proposed method. To conclude, we believe that our work {has enriched the theoretical understanding of weight decay and 
	explained its} practical utility in the field of deep learning applications.
	
	\bibliographystyle{plain}
	\bibliography{ref}

\end{document}